\documentclass[10pt,a4paper]{article}
\usepackage[utf8]{inputenc}
\usepackage[left=2.5cm, top=3cm, right=2.5cm, bottom=3cm]{geometry}                
 \usepackage{graphicx}
 \usepackage{amssymb, mathtools}
  \usepackage{times}
 \usepackage{amsthm,amsfonts,euscript,
 mathrsfs,graphics,color,amsmath,latexsym,marginnote}
 \usepackage{dsfont}

\numberwithin{equation}{section}


\newtheorem{theorem}{Theorem}[section]

\newtheorem{lemma}[theorem]{Lemma}
\newtheorem{proposition}{Proposition}

\newtheorem{question}{Question}
\theoremstyle{definition}
\newtheorem{definition}[theorem]{Definition}
\newtheorem{remark}{Remark}

\newcommand{\eps }{\epsilon}
\newcommand{\rr }{\rho}

\newcommand{\tc }{\mathtt{c}}

\newcommand{\R}{\mathbb{R}}

\newcommand{\T}{\mathbb{T}}
\newcommand{\C}{\mathbb{C}}

\newcommand{\Z}{\mathbb{Z}}
\newcommand{\N}{\mathbb{N}}
\newcommand{\cX}{\mathcal{X}}

\newcommand{\bral}{[\![}
\newcommand{\brar}{]\!]}

\DeclareGraphicsRule{.tif}{png}{.png}{`convert #1 `dirname #1`/`basename #1 .tif`.png}

 \title{Sobolev norms explosion for the cubic NLS on irrational tori}

\author{Filippo Giuliani$^{1}$, Marcel Guardia$^{2, 3, 4}$ \thanks{The authors are supported by the European Research Council (ERC) 
under the European Union's Horizon 2020
research and innovation programme under grant agreement 
No 757802.}  \\
\small${}^{1}$ Dipartimento di Matematica, Politecnico di Milano, Milano, Italy.\\
\small${}^{2}$ Departament de Matem\`atiques, Universitat Polit\`ecnica de Catalunya (UPC), Barcelona, Spain.\\
\small${}^{3}$ IMTECH, Universitat Polit\`ecnica de Catalunya (UPC), Barcelona, Spain.\\
\small${}^{4}$ Centre de Recerca Matem\`atica, Barcelona, Spain.
}

\begin{document}
\maketitle

\begin{abstract} 
We consider the cubic nonlinear Schr\"odinger equation on $2$-dimensional irrational tori. We construct solutions which undergo growth of Sobolev norms. More concretely, for every $s>0$, $s\neq 1$ and almost every choice of spatial periods we construct solutions whose $H^s$ Sobolev norms grow by any prescribed factor. Moreover, for a set of spatial periods with positive Hausdorff dimension we construct solutions whose Sobolev norms go from arbitrarily small to arbitrarily large. We also provide estimates 
for the time needed to undergo the norm explosion.

Note that the irrationality of the space periods decouples the linear resonant interactions into products of $1$-dimensional resonances, reducing considerably the complexity of the resonant dynamics usually used to construct transfer of energy solutions. 

However, one can provide these growth of Sobolev norms solutions by using quasi-resonances relying on Diophantine approximation properties of the space periods.
\end{abstract}

\tableofcontents

\section{Introduction}

In the last decades there has been a lot of effort in understanding transfers of energy phenomena for linear and nonlinear PDEs on compact manifolds. 
A fundamental issue consists on constructing solutions mainly Fourier supported on a set of resonant modes that exchange energy among themselves as time evolves. 
Many works have been devoted to constructing different possible qualitative behaviors of such transfers. In \cite{GT}, \cite{GPT}, \cite{HausP17} the authors provide existence of periodic in time transfers of energy (usually called \emph{beating} effects) for nonlinear Schr\"odinger (NLS) equations on $\T:=\R/2\pi\Z$. More recently, in \cite{GGMP}, the authors prove the existence of solutions exhibiting \emph{chaotic-like} exchanges of energy for the cubic Wave and Beam equation on $\T^2:=\T\times \T$ (see also \cite{GGMPr}). We mention also \cite{GiulianiDC}, where the exchange of energy among couples of resonant modes relies on the presence of fast diffusion channels for the first order normal form of some nonlinear resonant PDEs on $\T$.

A fundamentally different question is to investigate whether it is possible to construct solutions exhibiting  transfer of energy  between modes of characteristically different scales. This phenomenon, known as \emph{forward} (backward) \emph{cascade} when the energy is transferred from low to high (high to low) modes, may lead to the \emph{growth} of higher order Sobolev norms as time evolves.

The importance of such phenomenon has been highlighted by Bourgain \cite{Bourgain00b}, who proposed the following question as one of the main problems for the XXI century  in the study of Hamiltonian PDEs.

\begin{question}Are there solutions $u(t)$ of the cubic defocusing nonlinear Schr\"odinger equation
\[\mathrm{i} u_t=\Delta u-|u|^2 u\qquad \text{ on }\qquad \T^2\]
such that
\[
\limsup_{t\to \infty} \| u(t)\|_{H^s(\T^2)}=\infty
\]
for some $s>1$?
\end{question}

Several works have been devoted to find polynomial upper bounds for the growth of high order  Sobolev norms (we mention for instance \cite{Bourgain96}, \cite{CatoireW10}, \cite{CollianderDKS01}, \cite{Sohinger11}, \cite{Staffilani97}, \cite{PTV17}). The results for \emph{lower bounds} on the growth are more scarce. The first ones are due to Bourgain \cite{Bou95}, \cite{Bourgain96} and Kuksin \cite{Kuksin96}, \cite{Kuksin97}, \cite{Kuksin97b}.

In $2010$, Colliander, Keel, Staffilani, Takaoka, Tao  \cite{CKSTT} provided the following outstanding result in the direction of the Bourgain conjecture.

\begin{theorem}\label{thm:Tao}
Let $s>1$ and $\mathcal{C}\gg 1$, $\mu\ll 1$. Then, there exists a solution $u(t)$ of the cubic NLS on $\T^2$ and $T>0$ such that
\[
\|u(0)\|_{H^s(\T^2)}\le \mu, \qquad \| u(T)\|_{H^s(\T^2)}\geq \mathcal{C}.
\]
\end{theorem}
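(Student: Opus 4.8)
The plan is to follow the scheme of Colliander--Keel--Staffilani--Takaoka--Tao: pass to a finite-dimensional resonant ``toy model'', build a solution of that model which transports its mass from low to high frequencies, quantify the resulting $H^s$ amplification, and finally show that the genuine NLS flow shadows the toy-model orbit long enough. First I would move to the Fourier side, writing $u(t,x)=\sum_{n\in\Z^2}\widehat u_n(t)e^{in\cdot x}$ and $a_n(t)=\widehat u_n(t)e^{i|n|^2t}$, so that the equation becomes, schematically, $-i\partial_t a_n=\sum_{n_1-n_2+n_3=n}a_{n_1}\bar a_{n_2}a_{n_3}\,e^{i\omega t}$ with $\omega=|n_1|^2-|n_2|^2+|n_3|^2-|n|^2$. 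The resonant part $\omega=0$ corresponds exactly to the quadruples $(n_1,n_2,n_3,n)$ which are the vertices of a (possibly degenerate) rectangle, with $\{n_1,n_3\}$ and $\{n_2,n\}$ opposite corners; this isolates the resonant system FNLS, $-i\dot a_n=-|a_n|^2a_n+\sum_{\mathrm{rectangles}}a_{n_1}\bar a_{n_2}a_{n_3}$, and everything else is genuinely oscillatory and will be absorbed into an error term at the very end.

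Next comes the combinatorial core: construct a finite set $\Lambda=\Lambda_1\sqcup\cdots\sqcup\Lambda_N\subset\Z^2$ split into $N$ ``generations'', where $\Lambda_{j+1}$ is obtained from $\Lambda_j$ by pairing its elements into ``nuclear families'' (the four corners of a rectangle), so that $|\Lambda_j|=2^{j-1}$. One needs $\Lambda$ to be \emph{admissible}: the only rectangles with all four vertices in $\Lambda$ are the intra-family ones built into the construction (no spurious resonances, no unwanted collinearities, bounded multiplicity of the relation $n_1-n_2+n_3=n$), and $\Lambda$ should be closed under the relevant reflections. Admissibility guarantees that the subspace $\{a_n=b_j\text{ for all }n\in\Lambda_j\}$ is invariant under FNLS restricted to $\Lambda$, and on it FNLS collapses to the toy-model ODE of the form $i\dot b_j=|b_j|^2b_j-2\bar b_j\,(b_{j-1}^2+b_{j+1}^2)$, $j=1,\dots,N$, with $b_0=b_{N+1}=0$.

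Then I would analyze this ODE. Each ``pure'' state $b=\mathbf e_j$ (all mass on generation $j$) sits on a circle of relative equilibria, and in the transverse $(b_{j-1},b_{j+1})$ directions the linearization is a hyperbolic saddle; there is a two-dimensional invariant ``slider'' connecting a neighborhood of $\mathbf e_j$ to a neighborhood of $\mathbf e_{j+1}$. Concatenating these sliders and using a covering / intermediate-value-type argument to pin down the initial datum produces a trajectory and a time $T_0>0$ with $|b_1(0)|\ge 1-\delta$, $|b_N(T_0)|\ge 1-\delta$, and all other $|b_j|$ small. Transplanting this to $\Lambda$, the $H^s$ norm squared is $\approx\sum_{n\in\Lambda_j}|n|^{2s}$ whenever the mass sits on generation $j$; since a generation can be placed so that this weight grows with $j$, for $s>1$ one arranges $\mathcal R:=\big(\sum_{n\in\Lambda_N}|n|^{2s}\big)^{1/2}\big/\big(\sum_{n\in\Lambda_1}|n|^{2s}\big)^{1/2}\to\infty$ as $N\to\infty$ (this is where $s>1$ enters: a forward cascade amplifies $H^s$ only then).

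Finally, the scaling and shadowing step. The FNLS system is scale invariant, $a\mapsto\lambda a$, $t\mapsto t/\lambda^2$, so after fixing $N$ (hence $\Lambda$, $T_0$, and $\mathcal R$) large enough that $\mathcal R\gtrsim\mathcal C/\mu$, I would rescale the toy-model solution by a small $\lambda$, chosen so that $\|u(0)\|_{H^s}=\lambda\big(\sum_{\Lambda_1}|n|^{2s}\big)^{1/2}\le\mu$; the relevant time is then $T=T_0/\lambda^2$. It remains to show the true NLS solution with this datum stays $o(1)$-close in $H^s$ to the FNLS solution on $[0,T]$: this is a shadowing/approximation lemma proved in a suitably weighted space (an $X^{s,b}$- or weighted $\ell^1$-type norm adapted to $\Lambda$), using local well-posedness together with the oscillatory nature of the non-resonant terms and of the ``leakage'' out of $\Lambda$, whose accumulated effect is a fixed positive power of $\lambda$ times a constant depending only on the now-fixed set $\Lambda$ — hence negligible once $\lambda$ is small. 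Combining everything, $\|u(T)\|_{H^s}\ge(1-o(1))\,\lambda\,\mathcal R\,\big(\sum_{\Lambda_1}|n|^{2s}\big)^{1/2}\ge\mathcal C$. \emph{The main obstacle is exactly this last shadowing step}: controlling the non-resonant and out-of-$\Lambda$ interactions over the long time $T$ is what forces the delicate quantitative properties of $\Lambda$ (bounded additive multiplicity, genuine non-resonance of every interaction touching $\Lambda$) and the right choice of norm; the explicit construction of an admissible $\Lambda$ with the prescribed frequency spread is the other technically demanding point.
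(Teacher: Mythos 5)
This theorem is quoted from Colliander--Keel--Staffilani--Takaoka--Tao \cite{CKSTT}; the paper does not reprove it, and its own contribution (Theorems \ref{thm:nosmall} and \ref{thm:small}) is the adaptation of the same machinery to irrational tori. With that caveat, your sketch is a faithful high-level outline of the CKSTT argument: interaction representation, isolation of the rectangle-resonant system, construction of an admissible generational set $\Lambda$, collapse to the toy-model ODE on the intragenerational-equality subspace, slider/heteroclinic dynamics producing a mass-transferring orbit, $H^s$ amplification from the frequency spread with $s>1$, and finally a scaling-plus-shadowing step in a weak norm. All of these are precisely the ingredients the present paper reuses (see Theorems \ref{thm:Iteam}, \ref{thm:gen}, \ref{thm:orbit} and Proposition \ref{prop:approxarg}).

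Two small corrections. First, the generation sizes are not $|\Lambda_j|=2^{j-1}$; in the CKSTT construction (and in Theorem \ref{thm:gen} here) every generation has the same cardinality $2^{N-1}$, so $|\Lambda|=N\,2^{N-1}$ — otherwise $\Lambda_1$ would be a singleton and could not contain nuclear families. Second, the toy-model trajectory used in the literature (and quoted as Theorem \ref{thm:orbit}) actually transfers mass from $b_3$ to $b_{N-1}$ rather than from $b_1$ to $b_N$, because boundary generations behave slightly differently; this is cosmetic and does not affect the $2^{(s-1)N}$ amplification.

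Where your sketch stops being a proof is, as you yourself flag, the shadowing step: one needs a concrete functional setting and a genuine Gronwall-type argument controlling (i) the non-resonant interactions, (ii) the ``leakage'' terms involving a single mode outside $\Lambda$, and (iii) the higher-order remainder of whatever normalization is performed, uniformly on $[0,T_0/\lambda^2]$. In the rational case of \cite{CKSTT} this is done in a weighted $\ell^1$-type space and hinges on $\Lambda$ being exactly resonant; the present paper's main novelty is that on an irrational torus the resonant set is too sparse to run that argument, so one must instead build a \emph{quasi}-resonant $\Lambda$ using a rational approximation $p/q$ of $\omega$, replace the resonant normal form by the finite partial normalization of Proposition \ref{prop:wbnf} (to avoid small divisors), and pay for the non-exact resonances with an extra term of size $\mathcal{U}_0\lesssim 3^{2N}R^2\,q\psi(q)$ in the Gronwall estimate of Proposition \ref{prop:approxarg}, which is what forces the Diophantine conditions on $\omega$ and the worse time bounds. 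Your sketch, as written, proves the rational-torus statement but would not go through verbatim for the theorems that are actually this paper's subject.
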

The solutions given by Theorem \ref{thm:Tao} undergo an arbitrarily large, but finite, growth of Sobolev norms after certain time. These solutions follow closely the orbits of a \emph{resonant model} (called Toy model in \cite{CKSTT}) which is obtained by restricting the resonant Hamiltonian to the finite dimensional subspace of functions with a certain finite Fourier support $\Lambda\subset\Z^2$, that has to satisfy several combinatorial properties. The construction of the set $\Lambda$  and the analysis of the dynamics of the resonant model are crucial steps of the proof of Theorem \ref{thm:Tao}.

We observe that Theorem \ref{thm:Tao} can be seen as a result of Lyapunov instability in the $H^s$ topology for the origin $u=0$ of the cubic NLS, which is an elliptic fixed point. Thus we refer to orbits displaying the norms explosion, as the ones given in Theorem \ref{thm:Tao}, as \emph{unstable solutions}.

 In \cite{GuardiaK12} {(see also the erratum \cite{GuardiaK12Err})} the authors provided estimates for the instability  time $T$ by making a deep analysis of the dynamics of the finite dimensional resonant model . In particular they proved that in the case the initial $H^s$-norm is not assumed to be small then the instability time $T$ has a polynomial upper bound with respect to the growth $\mathcal{C}/\mu$
 \begin{equation*}
 T\lesssim (\mathcal{C}/\mu)^c \qquad \text{for some constant}\,\,c=c(s)>0,
\end{equation*}
 otherwise the upper bound is super exponential in the growth
 \begin{equation*}
 T\lesssim e^{(\mathcal{C}/\mu)^c} \qquad \text{for some constant}\,\,c=c(s)>0.
 \end{equation*}
 In \cite{Guardia14} the second author of the present paper proved the existence of solutions of the cubic NLS on $\T^2$ with convolution potential undergoing arbitrarily large growth of Sobolev norms.  Theorem \ref{thm:Tao} has been extended to the quintic NLS by Haus-Procesi \cite{HPquintic} and, later, to any NLS with analytic nonlinearity by Guardia-Haus-Procesi \cite{GuardiaHP16}.

 Concerning results of $H^s$-instability for different objects rather than elliptic fixed points, we mention Hani \cite{Hani12} and Guardia-Hani-Haus-Maspero-Procesi \cite{GuardiaHHMP19}. 
 
 {The aforementioned results concerns large but finite growth of Sobolev norms. Existence of solutions displaying an unbounded growth, in the spirit of the Bourgain's conjecture, have been proved for linear time-dependent equations \cite{Bourgain99}, \cite{Delort2014}, \cite{BGMR18}, \cite{Maspero18g}, \cite{FaouRaphael}, \cite{Liang}, \cite{Thomann2020GrowthOS}, \cite{MasperoDispersive}, and, for nonlinear PDEs, by Hani-Pausader-Tzvetkov-Visciglia \cite{HaniV2015} for the cubic NLS on the product space $\R\times \T^d$ and G\'erard-Grellier \cite{GerardG10}, \cite{GerardG11} for solutions of the Szeg\"o equation.}
 
 \medskip
 
As we have already said, the construction of unstable orbits is usually based on the study of the resonant dynamics of the PDE.
It is reasonable then to argue that the more complicated  the resonant structure of a system is, the richer should be its resonant dynamics. This is why in general we expect to be more difficult to appreciate instability phenomena for equations on $1$-dimensional spatial domains\footnote{Actually in the case of the cubic NLS the system is completely integrable and the Sobolev norms are controlled for all time.}.
In higher dimension a similar situation occurs when we consider \textbf{irrational tori}, namely 
\begin{equation*}
\T^2_{\lambda}:=\T_{\lambda_1}\times \T_{\lambda_2}, \qquad \lambda:=(\lambda_1, \lambda_2) \qquad \T_{\lambda_i}:=\R/2\pi \lambda^{-1}_i \Z,
\end{equation*}
where $\lambda\in \R^2$ is an irrational vector, i.e. $\lambda\cdot k\neq 0$ for all $k\in\Z^2\setminus\{0\}$. Indeed, as it was observed in  \cite{StafWil}, in the case in which $\lambda_1^2/\lambda_2^2$ is not an integer,
a resonant relation 
\[
\lambda_1^2 (j_1^2-j_2^2+j_3^2-j_4^2)+\lambda^2_2 (k_1^2+k_2^2-k_3^2+k_4^2)=0,
\]
decouples into two ``one-dimensional" resonant relations
\begin{equation*}
j_1^2-j_2^2+j_3^2-j_4^2=0\qquad \text{and}\qquad
k_1^2+k_2^2-k_3^2+k_4^2=0.
\end{equation*}
{Therefore there exist only the resonant quartets $(j_1, k_1), (j_1,k_2), (j_2,k_1), (j_2,k_2)$, that are vertices of rectangles with horizontal and vertical edges.}

\medskip

It is a general belief that the irrationality of the torus should mitigate the boundary effects and produce a sort of weak dispersion.
Recent works on the analysis of the spreading of energy to high modes and Sobolev stability results for PDEs on irrational tori  support this intuition. 

Recently Staffilani and collaborators started investigating the spreading of energy for solutions of the cubic NLS on {irrational tori}.
In \cite{StafWil} Staffilani-Wilson remark that it is not possible to apply the overall strategy of \cite{CKSTT} to obtain solutions undergoing growth of Sobolev norms. This is due to the lack of ``enough'' resonant quartets that are the building blocks for the construction of the aforementioned set $\Lambda$.
 Moreover they give a quantitative estimate of the energy spreading in the time scale of local theory. 
 In \cite{StafWil2} Hrabsky, Pan, Staffilani and Wilson refine the above analysis and perform numerical experiments. They are able to keep track in a good quantitative way on how far the bulk of the support of  solutions with initial conditions of any size may travel after an arbitrary fixed time.
 
Another evidence of obstructions to instability of NLS equations on irrational tori is provided by the works of Deng \cite{Deng} and Deng-Germain \cite{DengGerm}, where the authors prove that on $3$-dimensional irrational tori the polynomial in time upper bounds for the growth of Sobolev norms have a smaller degree compared to the rational case. 
We also mention \cite{BMflat} for polynomial time estimates on the growth of Sobolev norms of solutions of linear Schr\"odinger equation with time-dependent potential on irrational tori and recent works of long time stability in Sobolev spaces, based on normal form methods \cite{FeolaMont}, \cite{FIM}, \cite{BFGI}, \cite{BLM}.
 
 \medskip

 In this paper we prove the existence of solutions of the cubic NLS on (almost all) $2$-dimensional irrational tori undergoing an arbitrarily large (but finite) growth of their Sobolev norms.
The key idea is to apply the mechanism of \cite{CKSTT} for a \emph{quasi-resonant} model and to obtain such model normalizing just a finite number of terms of the Hamiltonian. This  avoids small divisor problems that appear in performing a Birkhoff normal form for NLS on irrational tori. Then, we use some arguments of Diophantine approximation to prove that the dynamics of our quasi-resonant model  approximates well the dynamics of the cubic NLS for a certain range of time.


\section{Main Results}

Let us consider the cubic NLS equation
\begin{equation}\label{eq:NLS}
\mathrm{i} u_t=\Delta u-|u|^2 u, \qquad u=u(t, x, y), \qquad (x, y)\in \T^2_{(1, \omega)}=\T\times\T_{\omega}.
\end{equation}
We consider the following Fourier expansion for functions $u\colon \T^2_{(1, \omega)}\to \C$
\[
u(x, y)=\sum_{n:=(j, k)\in \Z^2} a_{n}\,e^{\mathrm{i} (j x+\omega k y)}, \qquad a_{n}:=\frac{1}{2\pi \omega^{-1} }\int_{\T\times \T_{\omega}} u(x, y)\,e^{-\mathrm{i} (j x+\omega k y) }\,dx \,dy.
\]
The Hamiltonian of  equation \eqref{eq:NLS} with respect to the symplectic form $-\mathrm{i}\sum_{n\in \Z^2} d a_{n}\wedge d\overline{a_{n}}$ is given by
\[
H=H^{(2)}+H^{(4)}
\]
where
\begin{equation}\label{def:eigen}
\begin{split}
H^{(2)}&=\sum_{n\in \Z^2} \lambda(n) |a_{n}|^2\qquad  \qquad\text{with}\qquad \lambda(n)= |j|^2+\omega^{2} |k|^2
\\
H^{(4)}&=\sum_{n_1-n_2+n_3-n_4=0} a_{n_1}\,\overline{a_{n_2}}\,a_{n_3}\,\overline{a_{n_4}}.
\end{split}
\end{equation}
Let $s\geq 0$, we define the Sobolev spaces 
\[
H^s:=H^s(\T^2_{(1, \omega)}):=\left\{ u=\sum_{n:=(j, k)\in \Z^2} a_n\,e^{\mathrm{i} (j x+\omega k y)} : \| u \|_s^2=\sum_{n\in \Z^2} |a_n|^2 \langle n \rangle^{2s}<\infty \right\}
\]
where
\[
\langle n \rangle:=\max\{ 1, |n| \}.
\]


As in \cite{GuardiaK12} we provide two results, that differ from requiring or not the smallness of the norm of the initial conditions. In the case of arbitrarily small data solutions we see that the bounds on the time of instability worsen when we consider stronger Diophantine conditions on the tori lengths.
We restrict ourselves to $\omega\in [1,+\infty)$ since one can scale any  torus to obtain one with this property.

\begin{theorem}\label{thm:nosmall}
There exists a set of irrational numbers $\mathcal{S}\subset[1, +\infty)$, which has full measure and Hausdorff dimension 1, such that for all $\omega\in \mathcal{S}$ the following holds.

Let $s>0, s\neq 1$ and  fix $\mathcal{C}\gg 1$. Then the cubic NLS \eqref{eq:NLS} on the irrational torus $\T^2_{(1, \omega)}$ possesses a solution $u(t)$ such that
\begin{equation}\label{bound:lowerC}
\| u(T)\|_s\geq \mathcal{C} \| u(0)\|_s,
\end{equation}
where
\begin{equation}\label{time1}
T\le \exp\left(\,{\mathcal{C}^{\beta}}\right) 
\end{equation}
for some constant $\beta=\beta(s)>0$.  Moreover, there exists a constant $\eta=\eta(s)>0$ such that 
\[
 \|u(t)\|_{0}\leq \mathcal{C}^{-\eta}\quad \text{ for all }\quad t\in [0,T].
\]
\end{theorem}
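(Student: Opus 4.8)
The plan is to make the finite-dimensional ``toy model'' of \cite{CKSTT}, with the quantitative control on its orbits from \cite{GuardiaK12, GuardiaK12Err}, drive a solution of \eqref{eq:NLS}, using \emph{quasi}-resonances in place of the essentially absent exact resonances of $\T^2_{(1,\omega)}$. Fix $N=N(s,\mathcal C)$ of order $\log\mathcal C$ (with an implicit constant that degenerates as $s\to1$) and let $\Lambda=\Lambda_1\cup\dots\cup\Lambda_N\subset\Z^2$ be the CKSTT frequency set, with $|\Lambda_i|=2^{N-1}$ and all frequencies bounded by some $R=R(N)$: its toy model possesses an orbit transferring almost all the mass between a low and a high generation, and, since $\sum_{n\in\Lambda_i}|n|^2$ is (almost) constant in $i$ while $t\mapsto t^{s}$ is convex for $s>1$ and concave for $0<s<1$, along the appropriate direction of this orbit the weight $W_i:=\sum_{n\in\Lambda_i}\langle n\rangle^{2s}$ of the generation carrying the mass grows by a factor $\ge\mathcal C^{2}$ (the case $s=1$ is genuinely excluded, since $H^{(2)}\simeq\|u\|_1^2$ is almost conserved when $\|u\|_0$ is small). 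The elementary interactions of the toy model are the rectangular quartets $n_1-n_2+n_3-n_4=0$, for which $A:=|j_1|^2-|j_2|^2+|j_3|^2-|j_4|^2=-(|k_1|^2-|k_2|^2+|k_3|^2-|k_4|^2)$; these are not resonant on $\T^2_{(1,\omega)}$, so one rescales $\Lambda$ anisotropically, replacing $n=(j,k)$ by $n_{p,q}:=(pj,qk)$ with $p/q$ a rational approximant of $\omega$ yet to be chosen. A rescaled rectangle then has resonance defect exactly $A\,(p^2-\omega^2q^2)$, of modulus $\lesssim\delta_1:=R^2|p^2-\omega^2q^2|$, whereas all non-rectangular quartets among the rescaled frequencies, and all quartets coupling them to outside modes, keep a defect $\gtrsim\omega^2q^2$ (here one invokes the CKSTT property that, up to degenerate quartets, the only square-torus resonances supported in $\Lambda$ are the rectangles internal to $\Lambda$). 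Moreover, modes reached from the rescaled set by the cubic nonlinearity remain in $p\Z\times q\Z$, so after pulling back along $(j,k)\mapsto(pj,qk)$ one is studying an NLS-type equation with dispersion $p^2|j|^2+\omega^2q^2|k|^2$, which on $\Lambda$ agrees with the square-torus dispersion (rescaled by $p^2$) up to an error $\lesssim\delta_1$.

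One then performs a \emph{finite} Birkhoff normalization: a number of canonical transformations polynomial in $N$, removing all quartets whose defect exceeds a threshold $\delta$ placed in the gap $(\delta_1,\omega^2q^2)$. Since only finitely many monomials are treated and the removed ones have defect $\geq\delta\gtrsim\omega^2q^2$, the transformation is of size $\lesssim\|u\|_0^2/(\omega^2q^2)$ and causes no small-divisor problem. After rescaling the amplitude to a small parameter $\varepsilon$, the restriction of the normalized Hamiltonian to the subspace of states constant on each generation is the CKSTT toy model perturbed by a detuning $\lesssim\delta_1$, with time reparametrized by $\varepsilon^{-2}$; hence the perturbed flow still performs the whole generation-to-generation transfer over a time $T\sim\varepsilon^{-2}T_{\mathrm{toy}}(N)$, as long as $\delta_1 T\ll1$, i.e. $|p^2-\omega^2q^2|\ll \varepsilon^2 R^{-2}T_{\mathrm{toy}}(N)^{-1}$. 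A bootstrap argument then controls, on $[0,T]$, the normal-form remainder, the mass leaking to modes outside the rescaled set (each such channel having defect $\gtrsim\omega^2q^2$ and being $O(\varepsilon^4)$-driven) and the feedback of the excited modes, so that the genuine solution of \eqref{eq:NLS} from the prescribed data shadows the toy-model orbit on $[0,T]$; this yields \eqref{bound:lowerC} together with $\|u(t)\|_0\simeq\varepsilon\le\mathcal C^{-\eta}$ once $\varepsilon$ is fixed small enough.

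I expect the main obstacle to be the following tension. For the cascade of \cite{CKSTT} to proceed correctly through all $N\sim\log\mathcal C$ generations, $\varepsilon$ must be taken very small — exponentially small in a power of $\mathcal C$ — which both produces $T\sim\varepsilon^{-2}T_{\mathrm{toy}}(N)\le\exp(\mathcal C^{\beta})$, matching \eqref{time1}, and forces $|p^2-\omega^2q^2|\asymp 2\omega q^2|\omega-p/q|$ to be extremely small, well beyond the Dirichlet rate $q^{-2}$. The way out is metric Diophantine approximation: by Khintchine's theorem, for a positive $\psi$ with $\sum_q q\,\psi(q)=\infty$ (for instance $\psi(q)=1/(q^2\log q)$), almost every $\omega$ satisfies $|\omega-p/q|<\psi(q)$ for infinitely many $p/q$, hence $|p^2-\omega^2q^2|<2\omega/\log q$ for infinitely many convergents; given $\mathcal C$, one selects among these a convergent with $q$ large enough that $\delta_1 T\ll1$, at the price of enormous, but finite and otherwise harmless, Fourier supports. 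The set $\mathcal S$ is then the set of $\omega$ admitting such approximants at every required scale; it has full Lebesgue measure, hence Hausdorff dimension $1$, which completes the proof.
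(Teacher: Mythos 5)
Your proposal follows essentially the same route as the paper: anisotropically rescale the CKSTT set by a convergent $p/q$ of $\omega$, perform a \emph{finite} Birkhoff normalization removing only the quartets coupling $\Lambda$ to its complement (whose divisors are uniformly $\gtrsim q^2$, by the pull-back to $\Z^2$ along $(j,k)\mapsto(pj,qk)$), run a bootstrap/shadowing argument transferring the toy-model orbit to the genuine NLS flow, and select $q$ via Khinchin's theorem with an approximation function essentially equivalent to the paper's $\psi(q)=\tc/(q\log q)$. Your convexity remark explaining the exclusion of $s=1$, and the explicit observation that the nonlinearity preserves $p\Z\times q\Z$, are nice reformulations of facts the paper uses implicitly, but the mechanism is the one in the paper.
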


%

Next theorem imposes that the initial $s$-Sobolev norm of the solution is arbitrarily small and explodes after certain (long) time.

\begin{theorem}\label{thm:small}
Fix $s>0, s\neq 1$. There exists a set $\widetilde{\mathcal{S}}_{2s}\subset[1, +\infty)$, which has  Hausdorff dimension $(1+s)^{-1}$, such that for all $\omega\in \widetilde{\mathcal{S}}_{2s}$ the following holds.

Fix $\mathcal{C}\gg 1$ and $\mu\ll1$. 
 Then, there exists a solution $u(t)$ of the cubic NLS \eqref{eq:NLS} on the irrational torus $\T^2_{(1, \omega)}$ and $T>0$ such that
\begin{equation}\label{bound:uppermu}
\| u(0)\|_s\le \mu, \qquad \| u(T)\|_s\geq \mathcal{C}.
\end{equation}
Moreover, for any $\tau>2s$, there exists a subset $\widetilde{\mathcal{S}}_\tau\subset \widetilde{\mathcal{S}}_{2s}$ with Hausdorff dimension $2(2+\tau)^{-1}$, such that if $\omega \in \widetilde{\mathcal{S}}_\tau$, the time $T$ satisfies
\begin{equation}\label{time2}
T\le\exp\left(\,\frac{1}{\tau-2s}\left(\frac{\mathcal{C}}{\mu}\right)^{\tilde\beta}\right) 
\end{equation}
for some constants $\tilde\beta(s)>0$.
\end{theorem}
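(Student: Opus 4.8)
I would follow the strategy behind Theorem~\ref{thm:nosmall}: run the \cite{CKSTT} mechanism on a \emph{quasi-resonant} finite-dimensional model rather than a resonant one — the model being produced by normalizing only finitely many monomials of $H^{(4)}$, so that no small-divisor Birkhoff theory is involved — and then control the distance between this model and the true flow of \eqref{eq:NLS} by Diophantine approximation of $\omega^2$. The only genuinely new point compared with Theorem~\ref{thm:nosmall} is that the $L^2$-amplitude $\rho$ carried by the solution on the resonant set $\Lambda$ must now be of size $\sim\mu$, and it is precisely this that both slows the instability to the time \eqref{time2} and replaces the full-measure set $\mathcal S$ by the thinner Jarn\'{\i}k-type sets $\widetilde{\mathcal S}_{2s}$, $\widetilde{\mathcal S}_\tau$. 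Concretely: given $\omega$, I would pick a convergent $p/q$ of the continued fraction of $\omega^2$, fix $N\in\N$, and build a finite set $\Lambda=\Lambda(N)\subset\Z^2$ carrying the \cite{CKSTT} tree of $N$ generations linked by nuclear families and, at the same time, adapted to $p/q$, meaning each family $(n_1,n_2,n_3,n_4)$, $n_1-n_2+n_3-n_4=0$, is exactly resonant for the rational torus $\omega^2=p/q$. Since the defect
\[
\delta=\lambda(n_1)-\lambda(n_2)+\lambda(n_3)-\lambda(n_4)=\big(j_1^2-j_2^2+j_3^2-j_4^2\big)+\omega^2\big(k_1^2-k_2^2+k_3^2-k_4^2\big)
\]
is affine in $\omega^2$ with integer coefficients, adaptation to $p/q$ forces $\delta=B\,(\omega^2-p/q)$, $B\in\Z$; the quantitative feature I would insist on is that $\Lambda(N)$ can be kept of diameter $\lesssim q\,C_N$, with $C_N$ depending only on the combinatorics, so that $\max_\Lambda|\delta|\lesssim q^2C_N^2\,|\omega^2-p/q|$. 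One canonical transformation removing the finitely many quartic monomials that are non-resonant for $\omega$ then puts \eqref{eq:NLS} in the form ``resonant model on $\Lambda$ $+$ error'', the error controlled by $\max_\Lambda|\delta|$ and by the quartic interactions leaving $\Lambda$.

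\textbf{Dynamics and parameter choice.} After the $L^2$-rescaling $a_n=\rho\,b_n$ the resonant model on $\Lambda$ becomes, up to the usual time rescaling, the \cite{CKSTT} toy model, which possesses an orbit sliding from a neighbourhood of the first generation to a neighbourhood of the last one in a finite travelling time $T_{\mathrm{tm}}(N)$ controlled as in \cite{GuardiaK12}. I would fix the \cite{CKSTT,GuardiaK12} size geometry of $\Lambda$ and orient the transfer so that $\|\cdot\|_s$ increases; by the \cite{CKSTT} size estimate for $\Lambda$ this is the transfer towards the last generation when $s>1$ and towards the first when $0<s<1$, and along it $\|\cdot\|_s$ grows by a factor $\gtrsim 2^{c|s-1|N}$. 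Hence $N\sim\log(\mathcal C/\mu)$ guarantees relative growth $\geq\mathcal C/\mu$. The remaining freedom — the amplitude $\rho$ and the scale $q$ — I would spend on pinning $\|u(0)\|_s=\mu$; since the modes of $\Lambda$ have magnitude $\sim q\,C_N$, this forces $\rho\sim\mu\,(q\,C_N)^{-s}(N2^N)^{-1/2}\ll1$. In the $b$-variables the toy-model dynamics runs on the slow time scale $\rho^{-2}$, so the norm explosion is completed at a time $T\sim\rho^{-2}\,T_{\mathrm{tm}}(N)$, with $\|u(T)\|_s\geq\mathcal C$.

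\textbf{Closing the approximation and the arithmetic sets.} A bootstrap/Gronwall argument shows the true solution of \eqref{eq:NLS} stays $o(1)$-close to the model orbit on $[0,T]$ as long as $\max_\Lambda|\delta|\cdot T\ll1$, i.e.
\[
q^2C_N^2\,|\omega^2-p/q|\;\rho^{-2}\,T_{\mathrm{tm}}(N)\ll1\quad\Longleftrightarrow\quad |\omega^2-p/q|<c(\mathcal C,\mu,s)\,q^{-(2+2s)},
\]
the equivalence using $\rho\sim\mu(qC_N)^{-s}(N2^N)^{-1/2}$ and $N\sim\log(\mathcal C/\mu)$, with $c(\mathcal C,\mu,s)>0$ \emph{independent of $q$}. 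Thus I only need $\omega^2$ to be approximable by rationals at rate $q^{-(2+2s)}$; since $\omega\mapsto\omega^2$ is bi-Lipschitz on $[1,+\infty)$, the Jarn\'{\i}k--Besicovitch theorem gives that the set of such $\omega$ has Hausdorff dimension $2/(2+2s)=(1+s)^{-1}$, which is $\widetilde{\mathcal S}_{2s}$; on it one closes the estimate by taking $p/q$ far enough along the subsequence of exceptionally good convergents, obtaining \eqref{bound:uppermu} but without an a priori bound on $q$, hence none on $T$. If moreover $\omega\in\widetilde{\mathcal S}_\tau$ for some $\tau>2s$ — the analogous set at rate $q^{-(2+\tau)}$, of Hausdorff dimension $2/(2+\tau)$ — then any convergent with $|\omega^2-p/q|<q^{-(2+\tau)}$ and $q\gtrsim c(\mathcal C,\mu,s)^{-1/(\tau-2s)}$ already satisfies the closing condition, so $q$ may be kept of that size; substituting into $T\sim\rho^{-2}T_{\mathrm{tm}}(N)$ and using $N\sim\log(\mathcal C/\mu)$ together with the control of $T_{\mathrm{tm}}(N)$ yields exactly \eqref{time2}, the prefactor $(\tau-2s)^{-1}$ coming from the threshold $q\sim c(\mathcal C,\mu,s)^{-1/(\tau-2s)}$. (This is also why Theorem~\ref{thm:nosmall} holds on a full-measure set: there $\|u(0)\|_s$ need not be pinned, so $\rho$ can be chosen independent of $q$, and the closing condition degenerates to $|\omega^2-p/q|<\varepsilon(\mathcal C)\,q^{-2}$ for a fixed $\varepsilon(\mathcal C)>0$, satisfied by almost every $\omega$ via its infinitely many large partial quotients.)

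\textbf{Main obstacle.} The hard part, I expect, is the construction of $\Lambda(N)$ together with the optimal choice of convergent: one must produce, for a \emph{prescribed} $\omega$ and for essentially arbitrary convergents $p/q$ of $\omega^2$, a set $\Lambda(N)$ that simultaneously realizes the full \cite{CKSTT} nuclear-family tree and size geometry (needed for the growth factor $2^{c|s-1|N}$ and for the toy-model dynamics), is adapted to $p/q$ (all defects a common integer multiple of $\omega^2-p/q$), and — most delicately — has diameter only \emph{linear} in $q$, so that $\max_\Lambda|\delta|\lesssim q^2|\omega^2-p/q|$ and not a higher power of $q$; it is precisely this exponent that yields the rate $q^{-(2+2s)}$ and the dimension $(1+s)^{-1}$. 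Reconciling the arithmetic forcing $\Lambda$ to live at scale $q$ (integrality of the $p/q$-rectangles, divisibility and parity conditions propagated along the family tree) with the rigid combinatorics of \cite{CKSTT}, and then tuning the convergent $p/q$ against the bootstrap threshold above, is the technical core; the rest should be a quantitative rerun of \cite{CKSTT,GuardiaK12} for the quasi-resonant model.
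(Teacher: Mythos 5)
Your high-level scheme — quasi-resonant toy model on a finite CKSTT-type set $\Lambda$, amplitude pinned so $\|u(0)\|_s\sim\mu$, bootstrap/Gronwall to shadow the model orbit, Jarn\'ik--Besicovitch to trade approximation rate for Hausdorff dimension, with the exponent bookkeeping $\rho\sim\mu(qC_N)^{-s}$, closing condition $|\omega^2-p/q|\lesssim q^{-(2+2s)}$, dimension $(1+s)^{-1}$ — is exactly the paper's strategy. But there are two genuine gaps.

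First, you approximate $\omega^2$ directly by a rational $p/q$ and require a finite set $\Lambda$ whose families satisfy the exact resonance $q(j_1^2-j_2^2+j_3^2-j_4^2)+p(k_1^2-k_2^2+k_3^2-k_4^2)=0$. You correctly flag this construction as the main obstacle, but flagging it does not close the gap: there is no integer scaling of the CKSTT set $\tilde\Lambda$ that enforces this constraint (one would need, e.g., $j=\sqrt{p}\,\tilde j$, $k=\sqrt{q}\,\tilde k$, not integer), and building such a $\Lambda$ from scratch while preserving all of $(P1)$--$(P6)$ is a hard arithmetic problem. The paper's way out is to approximate $\omega$ itself (not $\omega^2$) by a continued-fraction convergent $p/q$, and then apply the anisotropic scaling $\tilde j\mapsto p\tilde j$, $\tilde k\mapsto q\tilde k$. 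Under this scaling one has the identity $\Omega_{p/q}(n_1,\ldots,n_4)=p^2\,\Omega_1(\tilde n_1,\ldots,\tilde n_4)$, so every nuclear family of $\tilde\Lambda$ becomes a $(p,q)$-family \emph{automatically}, with $(P1)$--$(P6)$ carrying over for free; the defect is $|\Omega_\omega|\lesssim q^2(3^NR)^2\,|\omega^2-p^2/q^2|\lesssim 3^{2N}R^2\,q\psi(q)$, which is the paper's Lemma~\ref{lem:U0}. This anisotropic-scaling identity is the idea your write-up is missing.

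Second, your derivation of \eqref{time2} rests on the claim that $q$ ``may be kept of size $c(\mathcal C,\mu,s)^{-1/(\tau-2s)}$''. That is not justified: $\tau$-approximability only guarantees infinitely many convergents below the rate $q^{-(2+\tau)}$, and there can be arbitrarily long gaps in this subsequence, so the first such $q$ above your threshold may be uncontrollably large — and with it $T$. This is why the paper's $\widetilde{\mathcal S}_\tau$ also excludes $\omega$ that are $\tilde\psi$-approximable with $\tilde\psi(q)=q^{-\log q}$ (a non-Liouville condition of Hausdorff dimension $0$): it yields the two-sided bound $q_n^{1+\tau}-q_n\le q_{n+1}\le q_n^{\log q_n}$ on successive convergents, hence an explicit window in which a usable $q$ must lie, and only then the upper bound on $T$. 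As written, your argument proves \eqref{bound:uppermu} but not the quantitative bound \eqref{time2}.
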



Some comments are in order.
\begin{itemize}

\item The set $\mathcal{S}$ and the family of sets $\widetilde{\mathcal{S}}_\tau$ are defined through Diophantine approximation conditions of $\omega$ and are specified in Section \ref{def:Diophantine} below.

\item
Even if Theorem \ref{thm:nosmall} does not assume any smallness assumption on the $H^s$-norm of the initial datum, we are not able to provide polynomial upper bounds on the diffusion time as in \cite{GuardiaK12}. This comes from the fact that our \emph{Toy model} is not resonant, but just quasi-resonant. This is independent from the fact that we consider small or not the norm of the initial data.

\item Deng and Germain \cite{Deng}, \cite{DengGerm} showed that the growth of Sobolev norms for solutions of NLS is expected to be weaker on irrational tori. It is not easy to compare the time estimates that we obtain with these results because (i) our estimates are not optimal and are actually close to the ones obtained in the rational case, (ii) the upper bounds on the growth provided in \cite{Deng}, \cite{DengGerm} are for infinite time, while we are able to control the evolution for large, but finite, time.

\item Theorems \ref{thm:nosmall} and \ref{thm:small} can be seen as a transfer of energy counterpart to the ``energy localization'' result provided in \cite{StafWil2}. Indeed, \cite{StafWil2} (see also \cite{StafWil}) gives a control on the energy spreading from low to  high modes for equation \eqref{eq:NLS} on irrational tori. More precisely, the authors prove that the solutions which are supported at initial time at ``low'' Fourier modes ($\|(j,k)\|\leq N$ for some given $N$) can only excite ``very high modes'' ($\|(j,k)\|\geq M$ for a much larger $M$, depending on $N$ and the size of the initial Sobolev norm) after very long time.

The result in \cite{StafWil2} does not contradict ours for two reasons. First,  the time needed to attain growth in Theorems \ref{thm:nosmall} and \ref{thm:small} is much larger than the stability time in \cite{StafWil2}. Second, we attain growth of Sobolev norms by exciting high modes which are lower than the ones analyzed in  \cite{StafWil2}. 

The fine analysis of the time estimates and the size of the excited Fourier modes that we carry out allow us to suitably modify the approach in \cite{CKSTT} to obtain growth of Sobolev norms by drifting along the quasi-resonances of \eqref{eq:NLS}.

\item Our \emph{Toy model} is obtained through a partial normalization of the Hamiltonian of degree four. In particular we just need to normalize a finite number of terms in this Hamiltonian. This avoids small divisor problems.
 that may cause loss of derivatives and unboundness of the Birkhoff transformation. Indeed the irrationality of the torus implies that the combinations of linear frequencies accumulate to zero. This is in contrast with the case of the NLS on $\T^2$, where the linear eigenvalues are all integer numbers.


\end{itemize}

%

\subsection{Classical results on Diophantine approximation}\label{def:Diophantine}
A key point in the proofs of Theorems \ref{thm:nosmall} and \ref{thm:small} is to use Diophantine approximation properties for the length $\omega$ of the irrational torus. We devote this section to state several classical results in Diophantine approximation {(we mention \cite{BRV} for a modern survey on the subject)}. They will allow us to describe the sets $\mathcal{S}$ and $\widetilde{\mathcal{S}}_s$ introduced in the theorems.

\begin{definition}\label{def:psiapprox}
Let $\psi\colon \N\to \R^+:=[0, \infty)$ be a decreasing function. We say that $\omega\in \R$ is $\psi$-approximable if there exist infinitely many $(p, q)\in \Z\times\N$ such that
\[
\left|\omega-\frac{p}{q} \right|\le \frac{\psi(q)}{q}.
\]
We say that $(p, q)$ is a $\psi$-convergent of $\omega$.
\end{definition}

Note that the classical  Dirichlet approximation theorem states that all irrational numbers are $\psi$-approximable with $\psi(q)=q^{-1}$.

\begin{definition}\label{def:Wpsi}
Let $\psi\colon \N \to \R^+$ be a decreasing function.
Let us define
\[
W(\psi)=\left \{ \omega\in [1, \infty) : \omega\,\,\mathrm{\,\,is}\,\,\psi\,-\,\text{approximable}\right \}.
\]
\end{definition}
We denote by $m(\cdot)$ the Lebesgue measure on $\R$ and by $W(\psi)^c$ the complementary of $W(\psi)$ in $[1, \infty)$. We recall the following classical result in Diophantine approximation theory.
\begin{theorem}[Khinchin \cite{Kinch}]
Let $\psi\colon \N \to \R^+$ be a decreasing function. Then
\begin{align*}
&m(W(\psi))=
0 \qquad \qquad \mathrm{if}\qquad\,\sum_{q\in \N} \psi(q)<\infty,\\
&m(W(\psi)^c)=
0 \qquad \qquad \mathrm{if}\qquad\,\sum_{q\in \N} \psi(q)=\infty.
\end{align*}
\end{theorem}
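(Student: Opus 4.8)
\medskip
\noindent\textbf{Proof proposal.} The plan is to rephrase $W(\psi)$ as a $\limsup$ of neighbourhoods of rationals and then apply the two halves of the Borel--Cantelli lemma. For $q\in\N$ put
\[
E_q:=\bigcup_{\substack{p\in\Z\\ \gcd(p,q)=1}}\left(\frac pq-\frac{\psi(q)}{q},\ \frac pq+\frac{\psi(q)}{q}\right)\subset\R .
\]
One may assume $\psi(q)\to 0$: otherwise $\psi\ge\delta>0$, and by Dirichlet's theorem every irrational admits infinitely many $p/q$ with $|\omega-p/q|<q^{-2}<\psi(q)/q$, so $W(\psi)$ has full measure and there is nothing to prove. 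Since $\psi$ is \emph{decreasing}, reducing an approximating pair $(p,q)$ to lowest terms $(p',q')$ never worsens the approximation (indeed $\psi(q)/q=\psi(q)\cdot q^{-1}\le\psi(q')\cdot q'^{-1}$ as $q'\le q$), so up to the null set of rationals $W(\psi)$ coincides with $\limsup_{q\to\infty}E_q$, and it suffices to compute $m(\limsup_q E_q)$. By $1$-periodicity of the configuration $\{E_q\}_q$ I would work on the fixed interval $I:=[1,2]$, still writing $E_q$ for $E_q\cap I$.

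\medskip
\noindent\emph{Convergence case} ($\sum_q\psi(q)<\infty$). For each $q$ the set $E_q$ is a union of at most $q+1$ intervals of length $2\psi(q)/q$, so $m(E_q)\le 4\psi(q)$ and hence $\sum_q m(E_q)<\infty$. The easy half of Borel--Cantelli gives $m(\limsup_q E_q)=0$, whence $m(W(\psi))=0$.

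\medskip
\noindent\emph{Divergence case} ($\sum_q\psi(q)=\infty$). Here the coprimality condition is crucial: the number of reduced fractions $p/q\in I$ is the Euler totient $\varphi(q)$, and once $\psi(q)<1/2$ these intervals are pairwise disjoint, so $m(E_q)\asymp\varphi(q)\psi(q)/q$. First one checks $\sum_q m(E_q)=\infty$: by the classical asymptotic $\sum_{q\le Q}\varphi(q)/q\sim(6/\pi^2)Q$ and Abel summation, the monotonicity of $\psi$ gives $\sum_{q\le Q}\varphi(q)\psi(q)/q\gg\sum_{q\le Q}\psi(q)\to\infty$. Next one needs quasi-independence on average: there is $c>0$ and arbitrarily large $Q$ with
\[
\sum_{q,r\le Q}m(E_q\cap E_r)\ \le\ c\left(\sum_{q\le Q}m(E_q)\right)^{2}.
\]
To obtain this one bounds $m(E_q\cap E_r)$ for $q\ne r$ by counting pairs of reduced fractions $p/q,\,p'/r\in I$ whose $\psi$-neighbourhoods meet; two distinct such fractions are $\ge(qr)^{-1}$ apart, so their neighbourhoods meet only when $|pr-p'q|\le(q+r)\psi(\min\{q,r\})$, and estimating the number of solutions of this inequality and summing over $q,r\le Q$ (using monotonicity of $\psi$ once more) yields the displayed bound. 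A divergence form of the Borel--Cantelli lemma for events quasi-independent on average (second-moment / Chung--Erd\H{o}s type) then gives
\[
m\big(\limsup_q E_q\big)\ \ge\ \limsup_{Q}\ \frac{\big(\sum_{q\le Q}m(E_q)\big)^{2}}{\sum_{q,r\le Q}m(E_q\cap E_r)}\ \ge\ \frac1c\ >\ 0 .
\]
Finally, since $\limsup_q E_q$ is invariant (mod $0$) under the symmetries preserving the family $\{E_q\}_q$, a Lebesgue density argument --- Gallagher's zero--one law for $\psi$-approximable numbers --- forces its measure to be $0$ or full; being positive it is full, and the same holds on $[1,\infty)$ by periodicity, so $m(W(\psi)^c)=0$.

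\medskip
\noindent\textbf{Main obstacle.} The only genuinely delicate point is the quasi-independence estimate $\sum_{q,r\le Q}m(E_q\cap E_r)\lesssim\big(\sum_{q\le Q}m(E_q)\big)^{2}$ in the divergence case: it is an arithmetic incidence count, and it is precisely here --- together with the comparison $\sum_{q\le Q}\varphi(q)\psi(q)/q\gg\sum_{q\le Q}\psi(q)$ that makes divergence of $\sum_q\psi(q)$ actually feed the second-moment argument --- that the hypothesis ``$\psi$ decreasing'' is indispensable; without monotonicity the statement is the (much harder) Duffin--Schaeffer conjecture.
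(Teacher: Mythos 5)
The paper does not prove this theorem; it states it and cites \cite{Kinch}, so there is no in-paper proof to compare against. Your sketch follows the now-standard ``second-moment'' route (essentially Cassels--Gallagher) rather than Khinchin's original continued-fraction argument, and its structure is correct: the reduction of $W(\psi)$ to $\limsup_q E_q$ with coprime fractions (valid under monotonicity of $\psi$, as you note), the easy Borel--Cantelli half for convergence, the Abel-summation comparison $\sum_{q\le Q}\varphi(q)\psi(q)/q \gg \sum_{q\le Q}\psi(q)$ (again genuinely using monotonicity), the Chung--Erd\H{o}s second-moment inequality, and finally Gallagher's zero--one law to upgrade positive measure to full measure. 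You have also correctly identified the one place where your argument is not self-contained: the incidence count behind the quasi-independence bound $\sum_{q,r\le Q} m(E_q\cap E_r)\lesssim\big(\sum_{q\le Q} m(E_q)\big)^2$.

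That count is in fact the whole content of the divergence case, so you should be aware that the phrase ``estimating the number of solutions\dots yields the displayed bound'' is hiding a real computation, not a routine one. Concretely, for $q<r$ one must bound the number of coprime pairs $(p,q),(p',r)$ with $|pr-p'q|\le(q+r)\psi(q)$, multiply by $2\psi(r)/r$ for the overlap length, and then sum over $q,r\le Q$; the off-diagonal sum has to be shown comparable to $\big(\sum_{q\le Q}\varphi(q)\psi(q)/q\big)^2$, and one must be careful with the diagonal terms and the small-$q$ regime where $\psi(q)\ge 1/2$. Monotonicity of $\psi$ enters again in a non-cosmetic way (e.g. in replacing $r\psi(q)+q\psi(r)$ by $(q+r)\psi(\min\{q,r\})$ and in controlling the ``extra $+1$'' in the count of admissible $p$). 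None of this is wrong, and you flag it honestly as the main obstacle, but a complete proof must carry it out. An alternative, equally classical, route that avoids the second-moment machinery altogether in the monotone case is Khinchin's own: write $\omega$ in continued fractions, note that $\psi$-approximability with $\psi$ decreasing is governed by the growth of the denominators $q_{n+1}$ relative to $1/\psi(q_n)$, and apply the Gauss--Kuzmin statistics / ergodicity of the Gauss map. That argument is shorter for this specific theorem but does not generalize to higher dimensions or to the Duffin--Schaeffer setting, which is precisely what the second-moment approach buys you.
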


For Theorem \ref{thm:nosmall}, we consider the function
\begin{equation}\label{choice:psi:1}
\psi(q)=\frac{\tc}{q\,\log q}
\end{equation}
for some $\tc\geq 1$.  We say that $\omega$ is $( \log, \tc)$-approximable if $\omega$ is $\psi$-approximable with $\psi$ as in \eqref{choice:psi:1}. The set $\mathcal{S}$ considered in this theorem is just the set of $(\log, \tc)$-approximable numbers for some  $\tc\geq 1$. 
By Khinchin theorem the set of $(\log, \tc)$-approximable numbers has full Lebesgue measure and maximal Hausdorff dimension.

For Theorem \ref{thm:small} we have to be more restrictive. We consider the function 
\begin{equation}\label{choice:psi:2}
\psi(q)=\frac{\tc}{q^{1+\tau}}
\end{equation}
for some $\tc\geq 1, \tau>0$. We say that $\omega$ is $( \tau, \tc)$-approximable if it is $\psi$-approximable with $\psi$ as in \eqref{choice:psi:2}.
 Then, we define the set  $\widetilde{\mathcal{S}}_\tau$ introduced in Theorem \ref{thm:small} as  the set of $\omega\in [1,+\infty)$ which are $(\tau', \tc)$-approximable numbers for some $\tau'>\tau$ and $\tc\geq 1$ and \emph{are not} $\tilde\psi$-approximable with 
\[
 \tilde\psi(q)=\frac{1}{q^{\log q}}.
\]
Note that the set of $\tilde\psi$-approximable numbers is contained in the set of Liouville numbers, hence it has zero Lebesgue measure.
By Kinchin theorem also the set $\widetilde{\mathcal{S}}_s$ has measure zero. Regarding the Hausdorff dimension of such sets we have the following classical result.
\begin{theorem}[Jarn\'ik-Besicovitch \cite{Jarnik}, \cite{Besi}]
Let $\tau>0$, then the Hausdorff dimension of the set of $(\tau, 1)$-approximable numbers is $2(\tau+2)^{-1}$.
\end{theorem}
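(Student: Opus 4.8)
This is a classical theorem, so I only sketch the standard argument. Write $W_\tau\subset[1,\infty)$ for the set of $(\tau,1)$-approximable numbers, i.e. those $\omega$ for which $|\omega-p/q|\le q^{-(2+\tau)}$ holds for infinitely many $(p,q)\in\Z\times\N$. Since Hausdorff dimension is a local notion and $[1,\infty)$ is a countable union of unit intervals, it suffices to compute $\dim_H\bigl(W_\tau\cap[1,2]\bigr)$, and I would treat the upper and lower bounds separately.

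\emph{Upper bound.} For every $Q\in\N$, since the approximation occurs for infinitely many $q$, one has the trivial cover
\[
W_\tau\cap[1,2]\ \subseteq\ \bigcup_{q>Q}\ \bigcup_{p:\,p/q\in[1,2]}\Bigl(\tfrac pq-q^{-(2+\tau)},\ \tfrac pq+q^{-(2+\tau)}\Bigr).
\]
For fixed $q$ there are at most $q+1$ admissible values of $p$, so for $s>0$ the $s$-dimensional Hausdorff content of this cover is $\lesssim\sum_{q>Q}(q+1)\bigl(2q^{-(2+\tau)}\bigr)^{s}\lesssim\sum_{q>Q}q^{\,1-s(2+\tau)}$. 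This series converges and tends to $0$ as $Q\to\infty$ precisely when $1-s(2+\tau)<-1$, i.e. $s>2(\tau+2)^{-1}$; hence $\mathcal H^{s}(W_\tau)=0$ for all such $s$, which gives $\dim_H W_\tau\le 2(\tau+2)^{-1}$.

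\emph{Lower bound.} The cleanest route is the Mass Transference Principle of Beresnevich--Velani. Set $s:=2(\tau+2)^{-1}\in(0,1)$ and, for each reduced fraction $p/q$, let $B_{p/q}:=B\!\left(p/q,q^{-(2+\tau)}\right)$. Its $s$-dilation in $\R$ is $B_{p/q}^{\,s}=B\!\left(p/q,\bigl(q^{-(2+\tau)}\bigr)^{s}\right)=B\!\left(p/q,q^{-2}\right)$, because $(2+\tau)s=2$. By the Dirichlet approximation theorem recalled above, every irrational in $[1,2]$ lies in $B(p/q,q^{-2})$ for infinitely many coprime $(p,q)$, so $\limsup_{q\to\infty}\bigcup_{p}B(p/q,q^{-2})$ has full Lebesgue measure in $[1,2]$. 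The Mass Transference Principle then yields $\mathcal H^{s}\!\bigl(\limsup_{q}\bigcup_{p}B_{p/q}\bigr)=\infty$; since this $\limsup$ set is contained in $W_\tau$, we conclude $\dim_H W_\tau\ge s=2(\tau+2)^{-1}$, which together with the upper bound proves the theorem.

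\emph{Alternative, and the main difficulty.} One can also argue directly in the spirit of Jarn\'ik, and this is where essentially all the work lies. Fix $s<2(\tau+2)^{-1}$ and construct a Cantor-type subset $K\subseteq W_\tau$ carrying a mass distribution $\mu$: at stage $n$, inside each surviving interval $J$ of length $\ell_{n-1}$ one keeps the intervals $B(p/q,q^{-(2+\tau)})$ with $q$ a prime in a dyadic block $(Q_n,2Q_n]$, $Q_n\approx\ell_{n-1}^{-1}=Q_{n-1}^{2+\tau}$, and $p/q\in J$; there are $\asymp\ell_{n-1}Q_n^{2}/\log Q_n$ of them, they are pairwise disjoint because distinct such fractions are $\gtrsim Q_n^{-2}\gg Q_n^{-(2+\tau)}$ apart, and each has length $\asymp Q_n^{-(2+\tau)}$. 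Spreading the mass uniformly through this tree and estimating $\mu(B(x,r))$ separately in the ranges $\ell_n\le r\le Q_n^{-2}$ and $Q_n^{-2}\le r\le\ell_{n-1}$, one checks that $\mu(B(x,r))\lesssim r^{s}$ once $Q_n$ grows fast enough, and the mass distribution principle then gives $\mathcal H^{s}(K)>0$ for every $s<2(\tau+2)^{-1}$. In either approach the upper bound is a soft covering estimate, so the entire difficulty is the lower bound: producing a sufficiently thick Cantor subset of $W_\tau$ and controlling the $\mu$-measure of arbitrary balls uniformly across all scales (equivalently, verifying the hypotheses of the Mass Transference Principle from Dirichlet's theorem).
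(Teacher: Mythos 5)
The paper does not prove this theorem; it merely cites it as a classical result of Jarn\'ik and Besicovitch. Your sketch is therefore filling a gap the authors deliberately left open. Both the upper-bound covering estimate and the lower-bound argument you give are correct: the exponent comes out right because $s(2+\tau)>2$ makes $\sum_q q^{1-s(2+\tau)}$ converge, and the Mass Transference Principle, applied with $f(x)=x^{s}$ and $s=2(\tau+2)^{-1}$, does convert Dirichlet's theorem ($s$-dilated balls of radius $q^{-2}$ covering all irrationals) into $\mathcal H^{s}$-fullness of the $\limsup$ of the undilated balls of radius $q^{-(2+\tau)}$. The Cantor-set alternative you outline is essentially Jarn\'ik's and Besicovitch's original route. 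One small point of care in the MTP version: the principle gives $\mathcal H^{s}(B\cap\limsup B_{p/q})=\mathcal H^{s}(B)$ for every ball $B$, which is $\infty$ precisely because $s<1$; since $\tau>0$ forces $s<1$, you are safe, but it is worth noting explicitly that the endpoint $\tau=0$ (where $s=1$) is excluded. Overall this is a faithful and correct account of the classical proof, with the modern MTP shortcut replacing the harder half of the original Jarn\'ik--Besicovitch argument.
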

Therefore the set of $\tilde\psi$-approximable numbers  has 0 Hausdorff dimension, while the set $\widetilde{\mathcal{S}}_s$ has Hausdorff dimension $(1+s)^{-1}$. 
%
%

\subsection{Heuristics}
Now we give the main ideas on how we construct the unstable solutions of Theorems \ref{thm:nosmall} and \ref{thm:small}. These solutions have the following form
\[
u(t, x, y)=v(t, x, y)+R(t, x, y), \qquad v(t, x, y):=\sum_{n\in \Lambda} a_n(t)\,e^{\mathrm{i} (j x+\omega k y) },
\]
where $\Lambda\subset \Z^2$ is a finite set and $R(t)$ is a function which is small, at least for some time, in the Banach space $\ell^1$ (see the definition in \eqref{def:ell1}), which has a weaker topology with respect to the Sobolev spaces $H^s$\footnote{ We remark that this implies that the closeness in $\ell^1$ does not imply closeness in $H^s$.}. The compactly Fourier supported function $v(t)$ is the solution of a particular truncation of the NLS Hamiltonian. Then it is an approximate solution of the full cubic NLS.

More precisely the function $v(t)$ is solution of the system obtained by restricting the Hamiltonian terms of degree $4$, after a normalization procedure, to the finite dimensional subspace
\begin{equation*}
\mathcal{V}_{\Lambda}:=\{ a_n=0,\,\,n\notin \Lambda \},
\end{equation*}
which is generated by setting all the modes out of $\Lambda$ at rest. The normalization procedure consists in eliminating all the monomials of degree $4$ with exactly one $a_n$ with $n\notin \Lambda$ (see \eqref{def:eigen}). Thanks to the conservation of momentum it is easy to see that there is a finite number of such terms. More precisely the Hamiltonian is transformed as
\[
H^{(2)}+H^{(4)} \qquad \rightarrow \qquad H^{(2)}+\mathcal{N}+\mathcal{R},
\]
where $\mathcal{N}$ is the partially normalized Hamiltonian of degree $4$ and $\mathcal{R}$ is a function of order $6$ at the origin.
Then
 $\mathcal{V}_{\Lambda}$ is invariant under the flow of the truncated normalized Hamiltonian $\mathcal{N}$.   

  We refer to the system obtained by the restriction on $\mathcal{V}_{\Lambda}$ as our \emph{quasi-resonant model}, that will describe the effective dynamics of the modes in $\Lambda$. This is the counterpart of the Toy model of \cite{CKSTT}. A crucial difference is that the set $\Lambda$  in our case is not made by resonant quartets, but only by modes that are close to be resonant. More precisely it will be resonant with respect to the quadratic Hamiltonian
 \begin{equation}\label{def:lambdatilde}
 \sum_{n\in \Z^2} \tilde{\lambda}(n) |a_n|^2, \qquad \tilde{\lambda}(n):=j^2+\frac{p^2}{q^2} k^2,
 \end{equation}
 where $(p, q)\in \Z^2$ is a suitable $\psi$-convergent of $\omega$ (see Definition \ref{def:psiapprox}).
   The closeness to resonances of the cubic NLS on $\T^2_{(1, \omega)}$ is measured by the quality of the approximation of $\omega$ by the rational number $p/q$. 

 The properties of the set $\Lambda$ determine the equations of the quasi-resonant model. Our set $\Lambda$  has the same combinatorial properties as the set $\Lambda$  in \cite{CKSTT}. Then the equations that describe its evolution are the same.

We know that for this system there exists an orbit $r(t)$ displaying a transfer of energy from modes in $\Lambda$ with characteristically different scale. Since (in rotating coordinates) the vector field of the quasi-resonant model is homogenous of degree $3$ we can consider rescaled solutions
\[
r^{\lambda}(t):=\lambda^{-1} r(\lambda^{-2} t).
\]
The next step consists in proving an approximation argument, namely showing that solutions of the cubic NLS that starts close enough (in the weak norm) to $r^{\lambda}(0)$ shadow the orbit $r^{\lambda}(t)$ for a certain range of times. The terms that we have to control in order to guarantee that the solutions arising from a neighborhood of $r^{\lambda}(0)$ do not diverge too much are mainly two: (i) the remainder $\mathcal{R}$ coming from the normal form procedure, (ii) the error that is originated by considering the solution of a quasi-resonant Hamiltonian system, instead of a resonant one.

The control on the remainder $\mathcal{R}$ is guaranteed by taking the scaling factor $\lambda$ large enough, that means to consider solutions with very small (in $\ell^1$-norm) initial data. This is because the remainder is a function of higher order (six) at the origin.

The second term to control, after passing to rotating coordinates (to eliminate the linear part of the equation), presents a phase-term
\[
\exp\big({\mathrm{i}\, \Omega_{\omega}(n_1, \dots, n_4) t}\big)-1\quad \sim \quad   \Omega_{\omega}(n_1, \dots, n_4)\,t ,
\]
with
\[
 \Omega_{\omega}(n_1, \dots, n_4):=\lambda(n_1)-\lambda(n_2)+\lambda(n_3)-\lambda(n_4).
\]
This term has to be controlled over long time and it reflects how bad the quasi-resonant model approximates the full PDE. To control this term we need some good upper bounds for the resonant combinations $\Omega_{\omega}(n_1, \dots, n_4)$. Recalling \eqref{def:eigen} and \eqref{def:lambdatilde}, this shall be done by proving that $|\Omega_{\omega}-\Omega_{p/q}|$, where $\Omega_{p/q}$ is defined as $\Omega_{\omega}$ by replacing $\lambda_{n_i}$ with $\tilde{\lambda}_{n_i}$, is of the order of $q\, \psi(q)$. Thanks to the choices \eqref{choice:psi:1}, \eqref{choice:psi:2} this function is decreasing in $q$, hence we can provide better bounds by choosing larger $q$. We remark that at this step the rate of $\psi(q)$ does not play any role.

After the approximation argument we have to ensure that solutions $u(t)$ close to $r^{\lambda}(t)$ satisfies \eqref{bound:lowerC} and \eqref{bound:uppermu}. This is done by using the fact that the Birkhoff normalizing transformation is close to the identity and the set $\Lambda$ satisfies certain properties.

 Eventually, to impose that the $H^s$-Sobolev norm of the initial datum is small, as in Theorem \ref{thm:small}, we need to consider $\omega$ as a $(\tau, \tc)$-approximable number with $\tau>2s$. We also require that $\omega$ is  not Liouville to have estimates for the convergents of $\omega$.

\medskip

\paragraph{Plan of the paper} In Section \ref{sec:bnf} we prove an abstract result of partial normalization of the NLS Hamiltonian. 
In Section \ref{sec:Lambda} we construct the set $\Lambda$  and we show that we can apply the partial normalization of Section \ref{sec:bnf} with respect to $\mathcal{V}_{\Lambda}$. In Section \ref{sec:resmodel} we study the dynamics of the quasi-resonant model. In Section \ref{sec:approxarg} we prove the approximation argument. Finally, in Section \ref{sec:conclusion}, we prove the bounds on the Sobolev norms at initial and final time and we conclude the proofs of Theorems \ref{thm:nosmall} and \ref{thm:small}.

\paragraph{Acknowledgements} The authors are supported by the European Research Council (ERC) 
under the European Union's Horizon 2020
research and innovation programme (grant agreement 
No. 757802). M. Guardia is also supported by the Catalan Institution for Research and Advanced
Studies via an ICREA Academia Prize 2019. This work is also supported by the Spanish State Research Agency, through the Severo Ochoa and Mar\'ia de Maeztu Program for Centers and Units of Excellence in R\&D (CEX2020-001084-M)

\section{Birkhoff normal form}\label{sec:bnf}

\subsection{Notations}
We use the following notation:
\begin{itemize}
\item $a\sim b$ if there exist two constants $C_1, C_2>0$ which may depend on $\omega$ such that $C_1 a\le b \le C_2 a$.
\item $a\lesssim b$ if there exists a constant $C=C(\omega)>0$ such that $a\le C b$.
\item We denote by $n=(j, k)$ the generic element of $\Z^2$.
\item Given a set $\Lambda\subset \Z^2$ we denote by $|\Lambda|$ the cardinality of $\Lambda$.
\item Given a Hamiltonian $H$ we denote by $X_H$ its vector field.
\end{itemize}

\subsection{The Hamiltonian structure}


The equation \eqref{eq:NLS} can be seen as an infinite dimensional system of ODEs for the Fourier coefficients
\begin{equation}\label{eq:ode}
-\mathrm{i} \dot{a}_{n}=\lambda(n) a_{n}+\sum_{n_1-n_2+n_3=n} a_{n_1}\,\overline{a_{n_2}}\,a_{n_3}, \qquad n\in\Z^2.
\end{equation}
For our purpose it is useful to remove some cubic terms by using the Gauge invariance of equation \eqref{eq:NLS}
\[
a_{n}=\rr_{n}\,e^{\mathrm{i} G t}.
\]
Choosing $G=2 \| u\|^2_{L^2}$,  equation \eqref{eq:ode} becomes
\begin{equation}\label{eq:simpleNLS}
-\mathrm{i} \dot{\rr}_{n}={\lambda(n) \rho_n}-|\rr_{n}|^2 \rr_{n}+\sum_{\substack{n_1-n_2+n_3=n,\\ n_1\neq n_2}} \rr_{n_1}\,\overline{\rr_{n_2}}\,\rr_{n_3}, \qquad n\in \Z^2
\end{equation}
and its Hamiltonian is given by
\begin{equation}\label{def:calH}
\mathcal{H}=H^{(2)}+\mathcal{H}^{(4)}
\end{equation}
\[
\mathcal{H}^{(4)}:=-\frac{1}{4} \sum_{n\in \Z^2} |\rr_{n}|^4+\frac{1}{4} \sum_{\substack{n_1-n_2+n_3-n_4=0,\\ n_1\neq n_2, n_4}} \rr_{n_1}\,\overline{\rr_{n_2}}\,\rr_{n_3}\,\overline{\rr_{n_4}}.
\]
Now we partially normalize the above Hamiltonian. The normalization consists in eliminating the monomials of degree $4$ which are supported on a quartet $(n_1, n_2, n_3, n_4)$ with exactly one $n_i$ out of a given set $\Lambda$. In the following we introduce some definitions which are useful to identify such Hamiltonian terms. 
 
%
%

 \medskip

We consider a finite subset $\Lambda\subset \Z^2$ and we denote by $\mathcal{H}^{(4, d)}$, $0\le d\le 4$, the terms Fourier supported on 
\begin{equation}\label{def:A}
\begin{aligned}
\mathcal{A}(d):=\Big\{ & (n_1, n_2, n_3, n_4)\in (\Z^2)^4 : n_1-n_2+n_3-n_4=0,\\
\,\, &\text{exactly}\,\, d\,\, \text{integer vectors $n_i$ belong to}\,\, \Z^2\setminus \Lambda \Big\}.
\end{aligned}
\end{equation}
In particular $\mathcal{H}^{(4, 0)}$ is Fourier supported just on modes in $\Lambda$.
Given $r\in \R$, we call
\begin{equation}\label{def:Omega}
\Omega_{r}(n_1, \dots, n_4):=\sum_{i=1}^4 (-1)^{i+1} |j_i|^2+r^{2} \sum_{i=1}^4 (-1)^{i+1} |k_i|^2.
\end{equation}
These are the resonant combinations of order $4$ of the Laplacian on the torus $\T^2_{(1, r)}$.

In the Birkhoff normal form procedure and in the approximation argument we shall take into account respectively the lower and upper bounds of such combinations.  
Let us denote 
\begin{equation}\label{def:UL}
\mathcal{L}_k:=\inf_{\mathcal{A}(k)} |\Omega_{\omega}(n_1, \dots, n_4)|, \qquad \mathcal{U}_k:=\sup _{\mathcal{A}(k)} |\Omega_{\omega}(n_1, \dots, n_4)|.
\end{equation}

\medskip

We shall perform the normal form procedure in the space
\begin{equation}\label{def:ell1}
\ell^1:=\left\{ \rr\colon \Z^2\to \mathbb{C} : \| \rr\|_{\ell^1}:= \sum_{n\in \Z^2} |\rr_{n}|<\infty  \right\}.
\end{equation}
We recall that this space is an algebra with the convolution product. For $\eta>0$ we denote by 
\[
B(\eta):=\left\{  \rr\colon \Z^2\to \mathbb{C} : \| \rr\|_{\ell^1}<\eta \right\}.
\]
We shall use the following classical lemma.
\begin{lemma}\label{lem:young}
Let 
\[
F=\sum_{\substack{\sum_{i=1}^{d+1} \sigma_i n_i=0,\\ \sigma_i\in\{\pm\}}} F^{\sigma_1 \dots \sigma_{d+1}}_{n_1 \dots n_{d+1}}\,\rr^{\sigma_1}_{n_1}\dots \rr^{\sigma_{d+1}}_{n_{d+1}}, \qquad \rr_n^{+}:=\rr_n, \quad \rr_n^{-}:=\overline{\rr_n}
\]
 be a homogenous Hamiltonian of degree $d+1$ preserving momentum. Then
\[
\| X_F(\rr) \|_{\ell^1}\lesssim \bral F \brar \| \rr\|_{\ell^1}^d,
\]
where
\[
\bral F \brar:=\sup_{(\sigma_i, n_i)} |F^{\sigma_1 \dots \sigma_{d+1}}_{n_1 \dots n_{d+1}}|.
\]
Moreover, let $G$ be a homogenous, momentum preserving, Hamiltonian of degree $d'+1$ of the same form of $F$. Then $\{ F, G \}$ is a homogenous, momentum preserving, Hamiltonian of degree $d+d'$ and
\[
\bral \{F, G\} \brar\lesssim \bral F \brar\,\bral G \brar.
\]
{
We deduce that
\begin{equation}\label{ineq:lie}
\| [X_F, X_G](\rho) \|_{\ell_1}\lesssim \bral F \brar\,\bral G \brar \| \rho\|^{d+d'-1}_{\ell^1},
\end{equation}
where $[\cdot, \cdot ]$ denotes the standard Lie bracket between vector fields.
}

\end{lemma}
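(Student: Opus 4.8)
The plan is to proceed in three stages, each a routine application of the $\ell^1$-algebra structure. First I would prove the bound on $\|X_F(\rho)\|_{\ell^1}$. The Hamiltonian vector field $X_F$ has components $(X_F)_n = \pm \mathrm{i} \,\partial F/\partial \overline{\rho_n}$ (up to the sign dictated by the symplectic form), so $(X_F)_n$ is a sum of monomials of the form $F^{\sigma_1\dots\sigma_{d+1}}_{n_1\dots n_{d+1}}\,\rho^{\sigma_1}_{n_1}\cdots\widehat{\rho^{\sigma_i}_{n_i}}\cdots\rho^{\sigma_{d+1}}_{n_{d+1}}$ where one factor (the one equal to $\rho_n^{\mp}$) is omitted and the remaining $d$ indices satisfy a linear constraint $\sum \sigma_\ell n_\ell = \mp n$. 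Bounding each coefficient by $\bral F\brar$ and summing over $n$, the momentum constraint disappears and one is left with $\sum_n \sum_{n_1,\dots,n_d} |\rho_{n_1}|\cdots|\rho_{n_d}|$ over the free indices, which telescopes (by the triangle inequality / Young's inequality for convolutions, using that $\ell^1$ is a convolution algebra) into $\|\rho\|_{\ell^1}^d$. Accounting for the finitely many ($d+1$) choices of which factor is removed and the $\pm$ signs absorbs into the implied constant, giving $\|X_F(\rho)\|_{\ell^1}\lesssim \bral F\brar\,\|\rho\|_{\ell^1}^d$.

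Second, for the Poisson bracket: $\{F,G\} = \sum_n \big(\partial_{\rho_n} F\,\partial_{\overline{\rho_n}}G - \partial_{\overline{\rho_n}}F\,\partial_{\rho_n}G\big)$ (up to a constant factor from the symplectic form). Each term is, after carrying out the differentiation, a sum of monomials of degree $(d+1)+(d'+1)-2 = d+d'$ in the $\rho$'s; momentum preservation of $\{F,G\}$ follows because the internal index $n$ is contracted and the two linear constraints of $F$ and $G$ add up. The coefficients of $\{F,G\}$ are finite sums (over the contracted index $n$, which however ranges only over values forced by the momentum constraints of $F$ and $G$ — or more carefully, one groups monomials by their total multi-index and observes that only boundedly many internal contractions contribute to a fixed output monomial, with bound depending only on $d,d'$) of products of one coefficient of $F$ and one of $G$, hence $\bral\{F,G\}\brar \lesssim \bral F\brar\,\bral G\brar$.

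Third, \eqref{ineq:lie} follows by combining the two: $[X_F,X_G] = X_{\{F,G\}}$ (the standard fact that the Poisson bracket of Hamiltonians corresponds to minus the Lie bracket of their Hamiltonian vector fields, sign irrelevant for the estimate), $\{F,G\}$ is homogeneous of degree $d+d'$ and momentum preserving by step two, so applying step one to $\{F,G\}$ gives $\|[X_F,X_G](\rho)\|_{\ell^1} = \|X_{\{F,G\}}(\rho)\|_{\ell^1} \lesssim \bral\{F,G\}\brar\,\|\rho\|_{\ell^1}^{d+d'-1} \lesssim \bral F\brar\,\bral G\brar\,\|\rho\|_{\ell^1}^{d+d'-1}$.

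The only genuinely delicate point — and the one I would write out carefully — is the claim in step two that the output coefficients $\{F,G\}^{\sigma_1\dots}_{n_1\dots}$ are bounded by $C(d,d')\,\bral F\brar\,\bral G\brar$ rather than by something that grows with the contracted index. The resolution is that for a fixed output monomial $\rho^{\sigma_1}_{m_1}\cdots\rho^{\sigma_{d+d'}}_{m_{d+d'}}$, the internal contracted index $n$ appearing in $\partial_{\rho_n}F\,\partial_{\overline{\rho_n}}G$ is determined (up to finitely many choices — namely, $n$ must be one of the $m_i$'s or is fixed by the partition of the $m_i$'s between $F$ and $G$ together with the momentum constraints), so the sum defining each output coefficient has at most $C(d,d')$ terms. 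This combinatorial bookkeeping, together with the convolution-algebra estimate, is the heart of the matter; everything else is formal manipulation. Note that all implied constants are absolute (depending only on the degrees), so in fact no $\omega$-dependence enters here, consistent with the notation $\lesssim$.
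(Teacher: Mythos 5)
Your proof is correct and takes essentially the same route the paper gestures at: the $\ell^1$-algebra/Young's inequality argument (the paper defers to the appendix of \cite{GT}) followed by the identity $X_{\{F,G\}}=[X_F,X_G]$. Your observation in step two that momentum conservation pins down the contracted index $n$ once the split of the output multi-index between $F$ and $G$ is fixed is exactly the point that keeps the implied constant depending only on $d,d'$; the parenthetical alternative ``$n$ must be one of the $m_i$'s'' is not right, but the correct explanation follows it and carries the argument.
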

\begin{proof}
The proof relies on Young's inequality and the algebra property of $\ell^1$. We refer for instance to the Appendix A in \cite{GT} for more details. {The last inequality comes from the fact that $X_{\{ F, G\}}=[X_F, X_G]$.}
\end{proof}
The main result of this section is the following.
\begin{proposition}[Weak Birkhoff Normal Form]\label{prop:wbnf}
Let $\Lambda\subset \Z^2$ be a finite set such that 
\begin{equation}\label{assump:wbnf}
\Omega_{\omega}(n_1, n_2, n_3, n_4)\neq 0 \qquad \forall (n_1, n_2, n_3, n_4)\in \mathcal{A}(1),
\end{equation}
where $\Omega_{\omega}$ and $\mathcal{A}(1)$ are defined respectively in \eqref{def:Omega} and \eqref{def:A}.

Then, there exist $\eta>0$ and a symplectic change of coordinates $\rr=\Gamma(\beta)\colon B(\eta)\to B(2\eta)$ which transforms the Hamiltonian $\mathcal{H}$ in \eqref{def:calH} into the Hamiltonian
\begin{equation}\label{def:Htilde}
\widetilde{\mathcal{H}}:=\mathcal{H}\circ \Gamma=H^{(2)}+\mathcal{H}^{(4, 0)}+\mathcal{H}^{(4, \geq 2)}+\mathcal{R}.
\end{equation}
Moreover 
the following estimates hold 
\begin{align}
\sup_{\beta\in B(\eta)}\| \Gamma^{\pm 1}(\beta)- \beta \|_{\ell^1} &\lesssim \mathcal{L}_1^{-1}\, \eta^3, \label{bound:GammaId} \\ \label{bound:remainder}
\sup_{\beta\in B(\eta)} \| X_{\mathcal{R}}(\beta)\|_{\ell^1} &\lesssim \mathcal{L}_1^{-1}\, \eta^5+\mathcal{L}_1^{-2} \eta^7,
\end{align}
where $\mathcal{L}_1$ is defined in \eqref{def:UL}.
The estimate \eqref{bound:GammaId} holds also for the inverse $\Gamma^{-1}$.
\end{proposition}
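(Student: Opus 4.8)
The strategy is the standard Lie-series Birkhoff normal form, but carried out only to first order (degree four) and only on the \emph{finite} set of monomials in $\mathcal{H}^{(4,1)}$, so that no small divisors beyond $\mathcal{L}_1$ enter. First I would write $\mathcal{H}^{(4)} = \mathcal{H}^{(4,0)} + \mathcal{H}^{(4,1)} + \mathcal{H}^{(4,\geq 2)}$ according to the partition \eqref{def:A}, and look for a generating Hamiltonian $\chi$, homogeneous of degree $4$ and momentum-preserving, solving the homological equation $\{H^{(2)}, \chi\} + \mathcal{H}^{(4,1)} = 0$. Since $\mathcal{H}^{(4,1)}$ is supported on $\mathcal{A}(1)$, on each quartet the coefficient of $\chi$ is the coefficient of $\mathcal{H}^{(4,1)}$ divided by $\mathrm{i}\,\Omega_\omega(n_1,\dots,n_4)$, which is well-defined and bounded by $\mathcal{L}_1^{-1}$ thanks to assumption \eqref{assump:wbnf} and the definition \eqref{def:UL}. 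By conservation of momentum the set $\mathcal{A}(1)$ restricted to quartets with at least three indices in $\Lambda$ is finite (the out-of-$\Lambda$ index is then determined, and the in-$\Lambda$ indices range over a finite set), so $\chi$ is a \emph{finite} sum and $\bral \chi \brar \lesssim \mathcal{L}_1^{-1}\,\bral \mathcal{H}^{(4)} \brar \lesssim \mathcal{L}_1^{-1}$. I would define $\Gamma$ as the time-$1$ flow $\Phi^1_\chi$ of $X_\chi$.

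Next I would verify the quantitative claims. By Lemma \ref{lem:young} applied to $\chi$ (degree $d+1 = 4$, so $d=3$), $\|X_\chi(\rho)\|_{\ell^1} \lesssim \bral \chi \brar \|\rho\|_{\ell^1}^3 \lesssim \mathcal{L}_1^{-1} \|\rho\|_{\ell^1}^3$; a standard fixed-point/Gronwall argument on the flow then shows that for $\eta$ small enough (depending on $\mathcal{L}_1$ and $\omega$) the flow $\Phi^t_\chi$ is well-defined for $t\in[0,1]$, maps $B(\eta)$ into $B(2\eta)$, and satisfies $\sup_{B(\eta)}\|\Phi^{\pm 1}_\chi(\beta)-\beta\|_{\ell^1} \lesssim \mathcal{L}_1^{-1}\eta^3$, which is \eqref{bound:GammaId} for both $\Gamma$ and $\Gamma^{-1}$. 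For the transformed Hamiltonian I would expand via the Lie series: $\widetilde{\mathcal{H}} = \mathcal{H}\circ\Phi^1_\chi = H^{(2)} + \mathcal{H}^{(4)} + \{H^{(2)},\chi\} + \mathcal{R}$, where
\[
\mathcal{R} = \int_0^1 (1-t)\,\{\{H^{(2)},\chi\},\chi\}\circ\Phi^t_\chi\,dt + \int_0^1 \{\mathcal{H}^{(4)},\chi\}\circ\Phi^t_\chi\,dt.
\]
Using $\{H^{(2)},\chi\} = -\mathcal{H}^{(4,1)}$, the degree-four part collapses to $\mathcal{H}^{(4,0)} + \mathcal{H}^{(4,\geq 2)}$, giving \eqref{def:Htilde}. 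The remainder $\mathcal{R}$ is built from brackets of degree-four objects, hence degree $6$: $\{\mathcal{H}^{(4)},\chi\}$ and $\{\mathcal{H}^{(4,1)},\chi\}$ are momentum-preserving of degree $6$ with $\bral\cdot\brar \lesssim \mathcal{L}_1^{-1}$, and $\{\{H^{(2)},\chi\},\chi\} = -\{\mathcal{H}^{(4,1)},\chi\}$ has the same bound. Composition with $\Phi^t_\chi$ only changes $\bral\cdot\brar$ by a constant on $B(2\eta)$ (again by the flow estimate), so by Lemma \ref{lem:young} (degree $6$, $d=5$) one gets $\|X_\mathcal{R}(\beta)\|_{\ell^1} \lesssim \mathcal{L}_1^{-1}\eta^5$ from the leading part; the extra $\mathcal{L}_1^{-2}\eta^7$ term in \eqref{bound:remainder} comes from the next order in the Taylor expansion of $\mathcal{H}^{(4)}\circ\Phi^t_\chi$ (one more bracket with $\chi$, picking up another factor $\mathcal{L}_1^{-1}$ and $\eta^2$), which must be retained because $\mathcal{H}^{(4)}$, unlike $H^{(2)}$, is not killed after one bracket.

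The only genuinely delicate point is the \emph{finiteness} of $\chi$: one must argue carefully from momentum conservation $n_1 - n_2 + n_3 - n_4 = 0$ that a quartet with exactly one index outside $\Lambda$ and three inside has all four indices confined to a finite (depending only on $\Lambda$) set, so that the homological equation involves only finitely many divisors and $\inf|\Omega_\omega| = \mathcal{L}_1 > 0$ is actually attained and positive — this is where assumption \eqref{assump:wbnf} is used and is the reason no Diophantine/small-divisor condition on $\omega$ is needed here. Everything else is the routine Lie-transform bookkeeping, and the $\ell^1$-algebra estimates of Lemma \ref{lem:young} handle the analytic side automatically; one just has to track the powers of $\mathcal{L}_1^{-1}$ and $\eta$ through at most two nested brackets.
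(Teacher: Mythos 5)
Your proposal follows essentially the same route as the paper: the time-one Lie flow $\Gamma=\Phi^1_\chi$ of a degree-four generator solving the homological equation $\{H^{(2)},\chi\}+\mathcal{H}^{(4,1)}=0$, with finiteness of $\mathcal{A}(1)$ (hence of the set of divisors) coming from momentum conservation and $|\Lambda|<\infty$, the $\ell^1$-algebra estimates of Lemma \ref{lem:young} for the vector field bounds, and the observation that the inverse estimate follows by running the autonomous flow backwards. The one place where you diverge slightly, and where your write-up is internally inconsistent, is the remainder bookkeeping: with the Taylor expansion exactly as you displayed it, namely $H^{(2)}\circ\Phi^1$ expanded to second order and $\mathcal{H}^{(4)}\circ\Phi^1$ only to first order with integral remainder, $\mathcal{R}$ consists entirely of degree-six Hamiltonians with $\bral\cdot\brar\lesssim\mathcal{L}_1^{-1}$, which already gives the stronger bound $\|X_\mathcal{R}\|_{\ell^1}\lesssim \mathcal{L}_1^{-1}\eta^5$ and there is no source for an $\mathcal{L}_1^{-2}\eta^7$ term in your $\mathcal{R}$; that extra term appears in the paper only because they Taylor-expand the whole of $\mathcal{H}$ (including $\mathcal{H}^{(4)}$) to second order, producing an additional $\{F,\{F,\mathcal{H}^{(4)}\}\}$ of degree eight. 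Your sentence attributing the $\eta^7$ contribution to ``the next order in the Taylor expansion of $\mathcal{H}^{(4)}\circ\Phi^t_\chi$'' therefore does not describe anything present in your own $\mathcal{R}$, though it is a fair account of the paper's version. One further small imprecision: the composed quantity $G\circ\Phi^t$ is no longer a homogeneous polynomial, so one cannot literally speak of its $\bral\cdot\brar$ ``changing by a constant''; the correct statement (as in the paper) is that $X_{G\circ\Phi^t}=D\Phi^{-t}[X_G\circ\Phi^t]$ and $D\Phi^{-t}$ is bounded on $B(2\eta)$, which then lets Lemma \ref{lem:young} be applied to the homogeneous $G$ rather than to the composed Hamiltonian. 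None of this constitutes a gap — your bound is strictly stronger than the one claimed — but the exposition should be tightened to match the expansion you actually wrote.
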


We remark that the smallness condition on the radius $\eta$ that we need to impose to ensure that $\Phi^t$ maps $B(\eta)$ to $B(2\eta)$ (and similar for its differential $D\Phi^{t}$) is of the form
\[
\eta^2 \mathcal{L}_1^{-1}\ll 1.
\]
We will verify this condition for our particular choice of set $\Lambda$  at the end of Section \ref{sec:Lambda}.

\begin{proof}[Proof of Proposition \ref{prop:wbnf}]
We consider the following homogenous Hamiltonian
\[
F(\beta):=\sum_{\substack{n_1-n_2+n_3-n_4=0}} F^{+-+-}_{n_1 \dots n_4}\,\,  \beta_{n_1}\,\overline{\beta_{n_2}}\,\beta_{n_3}\,\overline{\beta_{n_4}}
\]
with
\begin{equation}\label{def:F}
F^{+-+-}_{n_1,\dots, n_4}:=\begin{cases}
\dfrac{1}{4 \mathrm{i} \Omega_{\omega}(n_1, \dots, n_4)}, \qquad (n_1, \dots, n_4)\in \mathcal{A}(1),\\[4mm]
0 \qquad \qquad \qquad\qquad \quad \,\,\,  \text{otherwise}.
\end{cases}
\end{equation}
By the assumption in \eqref{assump:wbnf}, the denominator $\Omega_{\omega}(n_1, \dots, n_4)$ never vanishes if $(n_1, \dots, n_4)\in \mathcal{A}(1)$. By conservation of momentum and the fact that $\Lambda$ is finite the set $\mathcal{A}(1)$ is finite, which implies that the constant $\mathcal{L}_1$ introduced in \eqref{def:UL} is a positive constant  depending on $\Lambda$ . Therefore,  $F$ is well defined and the associated Cauchy problem  is a well-posed ODE. Moreover, it is easy to see that with this choice of $F$ we have that
\begin{equation}\label{homo}
\{  F, H^{(2)}  \}+\mathcal{H}^{(4, 1)}=0.
\end{equation}
By the fact that $X_F$ is homogenous and analytic, its flow $\Phi^t$ maps smoothly $B(\eta)$ to $B(2\eta)$ for all $0\le t\le 1$ provided that $\eta$ is small enough. We consider the time-one flow map $\Phi^t|_{t=1}=:\Gamma$.
By \eqref{def:F}, the definitions in \eqref{def:UL} and  Lemma \ref{lem:young}, we have that
\[
\| X_F(\beta) \|_{\ell^1}\lesssim \mathcal{L}_1^{-1} \| \beta\|_{\ell^1}^3.
\]
Then, by the mean value theorem, we obtain \eqref{bound:GammaId} for $\Gamma$.
{Since $\Gamma$ is the time-one flow map of an autonomous system, the same argument can be used to prove the above inequality for the inverse $\Gamma^{-1}$.}
The new Hamiltonian is obtained by Taylor expanding $\mathcal{H}\circ \Phi^t$ at $t=0$
\begin{align*}
\mathcal{H}\circ \Gamma =&\,\mathcal{H}+\{ F, \mathcal{H} \}+\frac{1}{2}\int_0^1 (1-t) \{ F, \{ F, \mathcal{H}\}\}\circ \Phi^t\,dt\\
=&\,H^{(2)}+\mathcal{H}^{(4, 0)}+\mathcal{H}^{(4, \geq 2)}+\{ F, H^{(4)}\}\\
&\,+\frac{1}{2}\int_0^1 (1-t) \{ F, \{ F, {H^{(2)}}\}\}\circ \Phi^t\,dt
+\frac{1}{2}\int_0^1 (1-t) \{ F, \{ F, \mathcal{H}^{(4)}\}\}\circ \Phi^t\,dt\\
=&\,H^{(2)}+\mathcal{H}^{(4, 0)}+\mathcal{H}^{(4, \geq 2)}+\mathcal{R},
\end{align*}
where, using \eqref{homo},
\[
\mathcal{R}:=\{ F, H^{(4)}\}-\frac{1}{2}\int_0^1 (1-t) \{ F, \mathcal{H}^{(4, 1)}\}\circ \Phi^t\,dt+\frac{1}{2}\int_0^1 (1-t) \{ F, \{ F, \mathcal{H}^{(4)}\}\}\circ \Phi^t\,dt.
\]
By a straightforward computation, we have that
\[
X_{\mathcal{R}}=[X_F, X_{H^{(4)}}]-\frac{1}{2}\int_0^1 (1-t) D \Phi^{-t} [X_F, X_{\mathcal{H}^{(4, 1)}}]\circ \Phi^t\,dt+\frac{1}{2}\int_0^1 (1-t) D \Phi^{-t} [X_F,  [X_F, X_{\mathcal{H}^{(4)}}]]\circ \Phi^t\,dt,
\]
where $[\cdot, \cdot ]$ denotes the standard Lie bracket between vector fields. By the fact that $\Phi^t\colon B(\eta)\to B(2\eta)$ for $0\le t\le 1$ and $X_F$ is a (bounded) homogenous polynomial one can prove that 
\[
\sup_{\beta\in B(\eta)} \| D\Phi^{-t}(\beta) [\hat{\beta}]\|_{\ell^1}\lesssim \|\hat{\beta}\|_{\ell^1} \qquad \forall t\in (0, 1),
\]
provided that $\eta$ is small enough.
Then, by Lemma \ref{lem:young}-\eqref{ineq:lie} and the fact that $\Phi^t\colon B(\eta)\to B(2\eta)$ for $0\le t\le 1$ we obtain the bound \eqref{bound:remainder}.
This concludes the proof.
\end{proof}

\section{Construction of the set $\Lambda$}\label{sec:Lambda}
In this section we construct a set $\Lambda$ for which the Hamiltonian $\mathcal{H}^{(4, 0)}$ restricted to $\mathcal{V}_{\Lambda}$ gives the same equations of the Toy model considered in \cite{CKSTT}. Later we show that this set $\Lambda$ satisfies the assumption of the Proposition \ref{prop:wbnf}.

\medskip

The  set  $\Lambda$ that we consider is a suitable scaling of the set constructed in \cite{CKSTT}. It has to satisfy certain combinatorial properties which simplify the analysis of the dynamics of the modes in $\Lambda$. First we recall the construction and the properties of the set considered in \cite{CKSTT}, which we denote by $\tilde{\Lambda}$.
Following \cite{CKSTT}, the set $\tilde{\Lambda}$ can be decomposed as the disjoint union of generations $\tilde{\Lambda}_i$ as
\[
\tilde{\Lambda}=\tilde{\Lambda}_1\cup\dots\cup \tilde{\Lambda}_N.
\]
We say that a quartet $(\tilde{n}_1, \tilde{n}_2, \tilde{n}_3, \tilde{n}_4)$ is a \emph{nuclear family} if $\tilde{n}_1, \tilde{n}_3\in \tilde{\Lambda}_i$ and $\tilde{n}_2, \tilde{n}_4\in \tilde{\Lambda}_{i+1}$ for some $i=1, \dots, N-1$ and they form a non-degenerate rectangle\footnote{Note that the rectangles in $\Z^2$ give the resonant quartets for \eqref{eq:NLS} in the torus $\T^2_{(1,1)}=\T\times\T$}. We shall first construct a set with the following properties, which were considered in \cite{CKSTT}:
\begin{itemize}
\item[$(P1)$] (Closure): If $\tilde{n}_1, \tilde{n}_2, \tilde{n}_3\in\tilde{\Lambda}$ are three vertices of a rectangle then the fourth vertex belongs to $\tilde{\Lambda}$ too.
\item[$(P2)$] (Existence and uniqueness of spouse and children): For each $1\le i\le N-1$  and every $\tilde{n}_1\in\tilde{\Lambda}_i$ there exists a unique spouse $\tilde{n}_3\in \tilde{\Lambda}_i$ and unique (up to trivial permutations) children $\tilde{n}_2, \tilde{n}_4\in\tilde{\Lambda}_{i+1}$ such that $(\tilde{n}_1, \tilde{n}_2, \tilde{n}_3, \tilde{n}_4)$ is a nuclear family in $\tilde{\Lambda}$.
\item[$(P3)$] (Existence and uniqueness of parents and sibling): For each $1\le i\le N-1$  and every $\tilde{n}_2\in\tilde{\Lambda}_{i+1}$ there exists a unique sibling $\tilde{n}_4\in\tilde{\Lambda}_{i+1}$ and unique (up to trivial permutations) parents $\tilde{n}_1, \tilde{n}_3\in\tilde{\Lambda}_{i}$ such that $(\tilde{n}_1, \tilde{n}_2, \tilde{n}_3, \tilde{n}_4)$ is a nuclear family in $\tilde{\Lambda}$.
\item[$(P4)$] (Non-degeneracy): A sibling of any mode $m$ is never equal to its spouse.
\item[$(P5)$] (Faithfulness): Apart from nuclear families, $\tilde{\Lambda}$ contains no other rectangles.
{ \item[$(P6)$] If four points in $\tilde{\Lambda}$ satisfy $\tilde{n}_1-\tilde{n}_2+\tilde{n}_3-\tilde{n}_4=0$ then either the relation is trivial or such points form a family.}
\end{itemize}
{
 \begin{remark}
 The conditions $(P1)$--$(P5)$ are the ones considered in \cite{CKSTT}. The property $(P6)$ was also considered in \cite{GuardiaHHMP19}.
 \end{remark}
 }
 
The construction of the $\tilde{\Lambda}$ set is given by the following theorem proved in \cite{CKSTT} and \cite{GuardiaHHMP19}.

\begin{theorem}\label{thm:Iteam}
Fix any $\tilde{\eta}>0$ small. Then, there exists $\alpha\gg 1$  such that for any $N\gg 1$,  there exists a set  $\tilde{\Lambda}\subset \Z^2$ with
\[
\tilde{\Lambda}:=\tilde{\Lambda}_1\cup\dots\cup \tilde{\Lambda}_N
\]
which satisfies $(P1)$-$(P6)$ and also
\begin{equation*}
\frac{\sum_{n\in\Lambda_{N-1}}|\tilde{n}|^{2s}}{\sum_{n\in\Lambda_3} |\tilde{n}|^{2s}}\geq \frac{1}{2}\,2^{(s-1)(N-4)}.
\end{equation*}
Moreover there exists $R=R(N)$ satisfying
\[
 e^{\alpha^N}\leq R \leq e^{2(1+\tilde{\eta})\alpha^N}
\]
and $C>0$ independent of $N$ such that
\[
C^{-1} R\le  |\tilde{n}|\le C\,3^N\, R 
\qquad 
\forall \tilde{n}\in \tilde{\Lambda}_i, \qquad i=1, \dots, N
\]
%
and
\begin{equation*}
\frac{\sum_{\tilde{n}\in\Lambda_{j}}|\tilde{n}|^{2s}}{\sum_{\tilde{n}\in\Lambda_i} |\tilde{n}|^{2s}}\lesssim\,e^{sN}
\end{equation*}
for any $1\le i< j\le N$.
\end{theorem}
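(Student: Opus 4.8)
The plan is to obtain $\tilde\Lambda$ as a rescaled copy of the combinatorial set built in \cite{CKSTT} and \cite{GuardiaHHMP19}, and then verify by direct computation the three quantitative estimates. First I would recall the construction of the standard $\Lambda$ set of \cite{CKSTT}: it is built inductively as a union of $N$ generations $\Sigma_1,\dots,\Sigma_N$, where $\Sigma_1$ consists of $2^{N-1}$ suitably chosen ``combinatorial'' points on the unit circle of a large normed space $\mathbb{C}^{N-1}$ (the standard basis vectors and their negatives combined via the ``proliferation'' of nuclear families), and each subsequent generation is obtained by applying a rotation by a fixed irrational angle $\theta$ (so that rectangles are non-degenerate) to produce children. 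Property $(P1)$ is enforced by taking the \emph{closure} of the constructed set under the rectangle relation, and the key lemma of \cite{CKSTT} (plus the reinforcement in \cite{GuardiaHHMP19} giving $(P6)$) is that this closure does not destroy $(P2)$–$(P6)$ provided $\theta$ is chosen generic; this is precisely Theorem~\ref{thm:Iteam} modulo the scaling, so I would simply cite it for $(P1)$–$(P6)$ and for the key norm-inflation ratio $\sum_{\Lambda_{N-1}}|n|^{2s}/\sum_{\Lambda_3}|n|^{2s}\gtrsim 2^{(s-1)(N-4)}$.

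The substantive part is the localization in an annulus. The abstract construction produces a set $\Sigma\subset \mathbb{C}^{N-1}\cong \R^{2(N-1)}$, not in $\Z^2$; following \cite{CKSTT} one pushes it into $\Z^2$ by fixing a generator $z=e^{i\theta}$ of a suitable cyclotomic-type extension and mapping the $j$-th coordinate of $\mathbb{C}^{N-1}$ to multiplication by $z^{j}$, landing in $\Z[z]\subset\Z^2$ after clearing denominators; to make the points integer one multiplies by a large integer $Q$, and to spread the generations apart one further multiplies by powers so that generation $i$ lives at scale $\sim 3^{i}$ relative to the others. The upshot of that (standard but bookkeeping-heavy) analysis is that \emph{all} points of the un-rescaled set have modulus comparable to $1$ up to a factor $3^N$, i.e. they lie in an annulus of ratio $\lesssim 3^N$. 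I would then \emph{define} $R=R(N)$ to be any quantity in the window $[e^{\alpha^N},e^{2(1+\tilde\eta)\alpha^N}]$ — this window is nonempty precisely because $e^{2(1+\tilde\eta)\alpha^N}/e^{\alpha^N}=e^{(1+2\tilde\eta)\alpha^N}>1$ — and set $\tilde\Lambda:=R\cdot\Sigma$ (rounding each coordinate to the nearest integer, which perturbs moduli negligibly since $R$ is enormous). Scaling is an automorphism of all the rectangle/family relations, so $(P1)$–$(P6)$ and the $2^{(s-1)(N-4)}$ ratio are preserved verbatim. The two bounds $C^{-1}R\le|\tilde n|\le C\,3^N R$ and $\sum_{\Lambda_j}|n|^{2s}/\sum_{\Lambda_i}|n|^{2s}\lesssim e^{sN}$ then follow: the first is the annulus statement times $R$; the second because each generation has at most $2^{N}\le e^{N}$ points each of modulus $\le C3^NR$ while the denominator is $\ge (C^{-1}R)^{2s}$, giving a ratio $\lesssim e^{N}(3^N)^{2s}\lesssim e^{sN}$ after absorbing constants (the exponent is generous, so no care is needed).

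The main obstacle — really the only place where anything must be \emph{checked} rather than \emph{cited} — is confirming that one genuinely has the freedom to place the whole configuration at the prescribed exponential scale $R\in[e^{\alpha^N},e^{2(1+\tilde\eta)\alpha^N}]$ while keeping the points in $\Z^2$ and keeping the intra-generation spacing (the $3^N$ factor) and the faithfulness property $(P5)$/$(P6)$ intact. Concretely: rescaling by an integer $R$ is harmless for the relations, but one must make sure the \emph{rounded} points still satisfy the exact identities $n_1-n_2+n_3-n_4=0$ defining families and still violate all spurious ones. This is handled, as in \cite{CKSTT}, by working from the start with points in a fixed number field so that the relevant identities are exact after scaling by an appropriate integer denominator $Q=Q(N)$, and by choosing $R$ to be a multiple of $Q$; the size of $Q$ grows only like a fixed power of the (bounded-in-$N$) algebraic data times a controlled factor, so it comfortably fits inside the exponential window. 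Once this is arranged, $\tilde\Lambda$ has integer coordinates, $(P1)$–$(P6)$ hold, $R$ is at the desired exponential scale, and the three displayed estimates are the elementary consequences described above; in particular the lower bound $|\tilde n|\ge C^{-1}R\ge C^{-1}e^{\alpha^N}$ will later guarantee the positivity of $\mathcal{L}_1$ needed to apply Proposition~\ref{prop:wbnf}.
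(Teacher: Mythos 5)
The paper does not prove Theorem~\ref{thm:Iteam}: it is imported as a black box, with a one-line attribution to \cite{CKSTT} for $(P1)$--$(P5)$ and the norm-inflation ratio, to \cite{GuardiaHHMP19} for $(P6)$, and implicitly to \cite{GuardiaK12} and \cite{GuardiaHP16} for the quantitative localization of $\tilde\Lambda$ in the annulus $C^{-1}R\le |\tilde n|\le C\,3^N R$ with the double-exponential control $e^{\alpha^N}\le R\le e^{2(1+\tilde\eta)\alpha^N}$. So the relevant question is whether your reconstruction faithfully reflects what those references actually do.

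The overall skeleton you describe --- abstract combinatorial model in $\C^{N-1}$, generic ``placement'' into $\C\cong\R^2$, closure under rectangles, then realization in $\Z^2$ --- is the right one. But two specific claims in your sketch misrepresent the cited proof. First, $R=R(N)$ is \emph{not} a free parameter that you may ``define to be any quantity in the window'': it is an \emph{output} of the placement step. In \cite{CKSTT} (quantified in \cite{GuardiaK12}, \cite{GuardiaHP16}) one shows that the set of placements violating the required genericity is thin, then approximates a good real placement by a rational one and clears denominators; the size of the common denominator needed to land in $\Z^2$ while preserving all the exact linear and quadratic identities is what produces $R$, and the theorem merely asserts it can be made to fall in the stated exponential window. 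Second, the ``round to nearest integer'' step you propose would destroy the exact identities $n_1-n_2+n_3-n_4=0$ and $\Omega_1=0$ on which $(P1)$ and $(P5)$ rest, and the ``work in a fixed number field'' fix you invoke is not what \cite{CKSTT} does (they use a rational approximation plus clearing denominators, not an algebraic extension), nor have you verified it produces a set satisfying $(P5)$--$(P6)$. These are not cosmetic: the whole point of the localization estimates is that $R$ is pinned down by the construction in terms of $\alpha$ and $N$, and the rest of the paper (e.g.\ the choice of $q$ and $\lambda$ in Section~\ref{sec:conclusion}) relies on that dependence. Since the paper only cites the result, your sketch cannot be compared line by line with a proof in the text, but as a reconstruction of the cited proof it gets the architecture right while getting the mechanism that produces $R$ and the passage to $\Z^2$ wrong.
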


Our set $\Lambda$  is obtained by scaling each $\tilde{n}=(\tilde{j}, \tilde{k})\in\tilde{\Lambda}$ in the following way
\begin{equation}\label{rescaling}
\tilde{j} \mapsto p\,\tilde{j}, \qquad \tilde{k}\mapsto q\, \tilde{k}
\end{equation}
where $(q, p)\in \Z^2$ is a suitable $\psi$-approximation of $\omega$ to be chosen later. The key difference with \cite{CKSTT} is that our rescaling is anisotropic.

We have that $\Lambda$ is the disjoint union
\[
\Lambda=\Lambda_1\cup\dots\cup\Lambda_N,
\] 
where the generations $\Lambda_i$ correspond to the image of $\tilde{\Lambda}_i$ through the scaling \eqref{rescaling}.

We point out that the nuclear families in $\tilde{\Lambda}$ are not mapped into rectangles through the anisotropic scaling \eqref{rescaling}. Then, we have to ``translate'' the properties $(P1)$ and $(P5)$. {Since $(P2)$-$(P3)$-$(P4)$ are purely combinatorial and $(P6)$ involves just linear relations}, they remain unchanged under scaling. First we introduce the following definition.

\begin{definition}\label{def:pqfamily}
A  $(p, q)$-\emph{family} is a non-degenerate parallelogram with vertices $n_1, n_2, n_3, n_4\in\Lambda$ which satisfy
\begin{equation}\label{respq}
\begin{aligned}
&n_1-n_2+n_3-n_4=0,\\
&\Omega_{p/q}(n_1, n_2, n_3, n_4)=({j}_1^2-{j}_2^2+{j}_3^2-{j}_4^2)+\frac{p^2}{q^2} ({k}_1^2-{k}_2^2+{k}_3^3-{k}_4^2)=0.
\end{aligned}
\end{equation}
\end{definition}

\begin{lemma}
The image of a nuclear family in $\tilde{\Lambda}$ under the scaling \eqref{rescaling} is a $(p, q)$-family. Moreover, if $(n_1, n_2, n_3, n_4)$ is a $(p, q)$-family in $\Lambda$ then it is the image of a nuclear family. In particular $n_1, n_3\in \Lambda_i$, $n_2, n_4\in \Lambda_{i+1}$ for some $i=1, \dots, N-1$.
\end{lemma}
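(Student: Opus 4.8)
The plan is to prove the two directions separately, exploiting the explicit form of the anisotropic scaling \eqref{rescaling} and the structure of nuclear families as non-degenerate rectangles in $\tilde\Lambda$.

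\medskip

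\textbf{First direction (nuclear family $\mapsto$ $(p,q)$-family).} Let $(\tilde n_1,\tilde n_2,\tilde n_3,\tilde n_4)$ be a nuclear family in $\tilde\Lambda$, so these four points form a non-degenerate rectangle in $\Z^2$ with $\tilde n_1-\tilde n_2+\tilde n_3-\tilde n_4=0$ and, since it is a rectangle, also $|\tilde n_1|^2-|\tilde n_2|^2+|\tilde n_3|^2-|\tilde n_4|^2=0$ (this is the standard parallelogram-to-rectangle identity: for a parallelogram the linear relation holds, and the quadratic relation $\sum(-1)^{i+1}|\tilde n_i|^2=0$ is equivalent to the diagonals being equal, i.e. to the parallelogram being a rectangle). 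Writing $\tilde n_i=(\tilde j_i,\tilde k_i)$, the quadratic relation splits coordinatewise is \emph{not} automatic, but the linear relation does split: $\tilde j_1-\tilde j_2+\tilde j_3-\tilde j_4=0$ and $\tilde k_1-\tilde k_2+\tilde k_3-\tilde k_4=0$. Applying the scaling $n_i=(p\tilde j_i,q\tilde k_i)$, the linear relation $n_1-n_2+n_3-n_4=0$ is immediate by linearity. For the quadratic relation \eqref{respq} I compute
\[
\Omega_{p/q}(n_1,\dots,n_4)=\sum_{i=1}^4(-1)^{i+1}\Big(p^2\tilde j_i^2+\frac{p^2}{q^2}q^2\tilde k_i^2\Big)=p^2\sum_{i=1}^4(-1)^{i+1}\big(\tilde j_i^2+\tilde k_i^2\big)=p^2\sum_{i=1}^4(-1)^{i+1}|\tilde n_i|^2=0,
\]
using exactly that the original quartet is a rectangle. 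Non-degeneracy of the parallelogram with vertices $n_i$ follows since the scaling \eqref{rescaling} is an injective linear map (as $p,q\neq0$), so it sends the non-degenerate rectangle to a non-degenerate parallelogram. This gives that the image is a $(p,q)$-family.

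\medskip

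\textbf{Second direction (converse).} Suppose $(n_1,n_2,n_3,n_4)$ is a $(p,q)$-family in $\Lambda$. Since every point of $\Lambda$ is the image of a unique point of $\tilde\Lambda$ under the injective scaling, write $n_i=(p\tilde j_i,q\tilde k_i)$ with $\tilde n_i=(\tilde j_i,\tilde k_i)\in\tilde\Lambda$. The linear relation $n_1-n_2+n_3-n_4=0$ forces $p(\tilde j_1-\tilde j_2+\tilde j_3-\tilde j_4)=0$ and $q(\tilde k_1-\tilde k_2+\tilde k_3-\tilde k_4)=0$, hence $\tilde n_1-\tilde n_2+\tilde n_3-\tilde n_4=0$. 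The relation $\Omega_{p/q}(n_1,\dots,n_4)=0$ reads $p^2\sum(-1)^{i+1}\tilde j_i^2+p^2\sum(-1)^{i+1}\tilde k_i^2=0$, i.e. $\sum(-1)^{i+1}|\tilde n_i|^2=0$. Thus $(\tilde n_1,\tilde n_2,\tilde n_3,\tilde n_4)$ is a parallelogram (linear relation) with equal diagonals (quadratic relation), hence a rectangle in $\Z^2$; it is non-degenerate because the $n_i$ form a non-degenerate parallelogram and the scaling is an injective linear map, so it cannot collapse a non-degenerate configuration to a degenerate one (nor can it create a degenerate preimage of a non-degenerate image). Now invoke property $(P5)$ (faithfulness) of $\tilde\Lambda$: apart from nuclear families, $\tilde\Lambda$ contains no other rectangles, and property $(P6)$ guarantees the relation is not a trivial one, so $(\tilde n_1,\tilde n_2,\tilde n_3,\tilde n_4)$ must be a nuclear family. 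By definition of nuclear family this means $\tilde n_1,\tilde n_3\in\tilde\Lambda_i$ and $\tilde n_2,\tilde n_4\in\tilde\Lambda_{i+1}$ for some $i$, and since the generations $\Lambda_i$ are the images of $\tilde\Lambda_i$, we conclude $n_1,n_3\in\Lambda_i$ and $n_2,n_4\in\Lambda_{i+1}$.

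\medskip

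\textbf{Main obstacle.} The only genuinely non-formal point is handling degeneracy and the trivial relations: one must be careful to exclude the case where $\{\tilde n_1,\dots,\tilde n_4\}$ satisfies the linear and quadratic relations merely because of a trivial coincidence (e.g. $\tilde n_1=\tilde n_2$, $\tilde n_3=\tilde n_4$ or $\tilde n_1=\tilde n_4$, $\tilde n_2=\tilde n_3$), which is precisely what non-degeneracy of the parallelogram with vertices $n_i$ rules out after pulling back through the injective scaling, and then $(P5)$--$(P6)$ close the argument. Everything else is a direct substitution using that a parallelogram in $\R^2$ is a rectangle if and only if $\sum_{i=1}^4(-1)^{i+1}|\cdot_i|^2=0$, and that the map $(\tilde j,\tilde k)\mapsto(p\tilde j,q\tilde k)$ is linear and injective.
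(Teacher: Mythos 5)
Your proof is correct and follows the same approach as the paper's, centered on the identity $\Omega_{p/q}(n_1,\dots,n_4)=p^2\,\Omega_1(\tilde n_1,\dots,\tilde n_4)$ (the paper's \eqref{eq:fampqfam}); the paper's proof is essentially a one-liner in each direction, while you carefully spell out the converse via injectivity of the anisotropic scaling, preservation of non-degeneracy, and faithfulness $(P5)$. One small remark: it is the non-degeneracy of the $(p,q)$-family pulled back through the injective scaling, not property $(P6)$, that excludes trivial coincidences (as you in fact also say), so the appeal to $(P6)$ is superfluous here.
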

\begin{proof}
 After the scaling \eqref{rescaling} a nuclear family $(\tilde{n}_1, \tilde{n}_2, \tilde{n}_3, \tilde{n}_4)$ in $\tilde{\Lambda}$ is mapped into a $(p, q)$-family in $\Lambda$. Indeed $n_i=(j_i, k_i)$ is such that $j_i=q \tilde{j}_i$, $k_i=p\tilde{k}_i$, then
\begin{equation}\label{eq:fampqfam}
\Omega_{p/q}(n_1, n_2, n_3, n_4)=p^2 \Omega_1(\tilde{n}_1, \tilde{n}_2, \tilde{n}_3, \tilde{n}_4)=0.
\end{equation}
We observe that the momentum relation is preserved by \eqref{rescaling}. The fact that a $(p, q)$-family in $\Lambda$ is the image of a family comes again from \eqref{eq:fampqfam} and the definition of $\Lambda$. The last assertion comes from the definition of $\Lambda_i$.
 \end{proof}

By the previous lemma we can replace the properties $(P1)$ and $(P5)$ with the following:
\begin{itemize}
\item[$(P1')$] (Closure): If $n_1, n_2, n_3\in\Lambda$ are three vertices of a non-degenerate parallelogram satisfying \eqref{respq} then the fourth vertex belongs to $\Lambda$ too.
\item[$(P5')$] (Faithfulness): Apart from $(p, q)$-families, $\Lambda$ contains no other parallelograms satisfying \eqref{respq}.
\end{itemize}

%

By the scaling \eqref{rescaling}, Theorem \ref{thm:Iteam} can be translated into the following.

\begin{theorem}\label{thm:gen}
Fix any $\tilde{\eta}>0$ small. Then, there exists $\alpha\gg 1$ such that for any $N\gg 1$ and any $p,q\in\mathbb{N}$, there exists a set $\Lambda\subset p\Z\times q \Z\subset \Z^2$ with
\[
\Lambda:=\Lambda_1\cup\dots\cup\Lambda_N,
\]
which satisfies conditions $(P1'), (P2),(P3),(P4),(P5'), (P6)$ and also
\begin{equation*}
\frac{\sum_{n\in\Lambda_{N-1}}|n|^{2s}}{\sum_{n\in\Lambda_3} |n|^{2s}}\gtrsim \,2^{(s-1)(N-4)}.
\end{equation*}
Moreover, we can ensure that each generation $\Lambda_i$ has $2^{N-1}$ disjoint frequencies satisfying 
\begin{equation}\label{bound:gen}
\frac{\sum_{n\in\Lambda_{j}}|n|^{2s}}{\sum_{n\in\Lambda_i} |n|^{2s}}\lesssim\,e^{sN},
\end{equation}
for any $1\le i< j\le N$, and 
\begin{equation}\label{def:R}
C^{-1}\,q\, R\le  |{n}|\le C\,q\,3^N\, R, \qquad \forall {n}\in {\Lambda}_i, \qquad i=1, \dots, N,
\end{equation}
where $C>0$ is independent of $N$ and
 $R=R(N)$ is the constant introduced in Theorem \ref{thm:Iteam}, which satisfies
\[
 e^{\alpha^N}\leq R \leq e^{2(1+\tilde{\eta})\alpha^N}.
 \]
\end{theorem}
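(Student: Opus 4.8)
The plan is to deduce Theorem \ref{thm:gen} directly from Theorem \ref{thm:Iteam} by applying the anisotropic scaling \eqref{rescaling} and checking that every assertion transforms in the expected way. First I would fix $\tilde\eta>0$, invoke Theorem \ref{thm:Iteam} to obtain $\alpha\gg1$ and, for each $N\gg1$, the set $\tilde\Lambda=\tilde\Lambda_1\cup\dots\cup\tilde\Lambda_N$ with all the properties $(P1)$--$(P6)$, the cardinality $|\tilde\Lambda_i|=2^{N-1}$ of disjoint frequencies, the size bounds $C^{-1}R\le|\tilde n|\le C\,3^N R$, the ratio bounds for the $H^s$-weighted sums, and the estimate on $R=R(N)$. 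Then, given $p,q\in\N$, I set $\Lambda_i:=\{(p\tilde j,q\tilde k):(\tilde j,\tilde k)\in\tilde\Lambda_i\}$ and $\Lambda:=\Lambda_1\cup\dots\cup\Lambda_N$; since \eqref{rescaling} is an injective linear map $\Z^2\to p\Z\times q\Z$, the generations stay disjoint, $|\Lambda_i|=|\tilde\Lambda_i|=2^{N-1}$, and $\Lambda\subset p\Z\times q\Z$.

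Next I would verify the combinatorial conditions. Properties $(P2)$, $(P3)$, $(P4)$ only refer to uniqueness of spouses/children/parents/siblings within the family structure and to the non-coincidence of a sibling with a spouse; since the scaling is a bijection between $\tilde\Lambda$ and $\Lambda$ carrying nuclear families to $(p,q)$-families (by the Lemma just proved, using $\Omega_{p/q}(n_1,n_2,n_3,n_4)=p^2\,\Omega_1(\tilde n_1,\tilde n_2,\tilde n_3,\tilde n_4)$ and the preservation of the momentum relation $n_1-n_2+n_3-n_4=0$ under any linear map), these properties transfer verbatim. Property $(P6)$ involves only the linear relation $n_1-n_2+n_3-n_4=0$, which is scaling-invariant, and the classification of its solutions as trivial or forming a family, so it too is preserved. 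For $(P1')$ and $(P5')$ I would argue that a non-degenerate parallelogram $n_1,n_2,n_3,n_4\in\Lambda$ satisfying \eqref{respq} pulls back under \eqref{rescaling} to four points $\tilde n_i\in\tilde\Lambda$ with $\tilde n_1-\tilde n_2+\tilde n_3-\tilde n_4=0$ and $\Omega_1(\tilde n_1,\dots,\tilde n_4)=p^{-2}\Omega_{p/q}(n_1,\dots,n_4)=0$, i.e. a rectangle in $\tilde\Lambda$; then $(P1)$ resp. $(P5)$ for $\tilde\Lambda$ gives that the fourth vertex lies in $\tilde\Lambda$ resp. that this rectangle is a nuclear family, and pushing forward yields $(P1')$ resp. $(P5')$. (Non-degeneracy is preserved because \eqref{rescaling} is a linear isomorphism onto its image.)

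Finally I would handle the quantitative estimates. For the size bound \eqref{def:R}: if $n=(p\tilde j,q\tilde k)$ with $\tilde n\in\tilde\Lambda_i$, then since $p,q\ge1$ and $q\ge p$ is \emph{not} assumed, I would simply use $\min\{p,q\}|\tilde n|\le|n|\le\max\{p,q\}|\tilde n|$ — but because we only need the statement for the convergents we will pick (where $p/q\sim\omega\sim1$, so $p\sim q$), it suffices to note $|n|\sim q|\tilde n|$ with constants depending on $\omega$, and combine with $C^{-1}R\le|\tilde n|\le C\,3^N R$ from Theorem \ref{thm:Iteam} to get \eqref{def:R}; the bound $e^{\alpha^N}\le R\le e^{2(1+\tilde\eta)\alpha^N}$ is inherited unchanged since $R=R(N)$ does not depend on $p,q$. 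For the $H^s$-ratio bounds, I would observe that the weighted sums satisfy $\sum_{n\in\Lambda_i}|n|^{2s}\sim q^{2s}\sum_{\tilde n\in\tilde\Lambda_i}|\tilde n|^{2s}$ with the same $\omega$-dependent constants in numerator and denominator, so the $q$-factors cancel and the ratios $\frac{\sum_{\Lambda_{N-1}}}{\sum_{\Lambda_3}}\gtrsim2^{(s-1)(N-4)}$ and $\frac{\sum_{\Lambda_j}}{\sum_{\Lambda_i}}\lesssim e^{sN}$ follow from the corresponding estimates for $\tilde\Lambda$ in Theorem \ref{thm:Iteam}. I expect the only mild subtlety to be bookkeeping the $\omega$-dependence of the implicit constants (absorbed into the $\sim$, $\lesssim$ notation, which by the conventions of Section \ref{sec:bnf} is allowed to depend on $\omega$) and making sure that the comparison $|n|\sim q|\tilde n|$ is uniform over the finitely many generations; neither is a genuine obstacle, so the proof is essentially a transcription exercise.
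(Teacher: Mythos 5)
Your proposal is correct and follows exactly the paper's argument: apply the anisotropic scaling \eqref{rescaling}, use the identity $\Omega_{p/q}(n_1,\dots,n_4)=p^2\,\Omega_1(\tilde n_1,\dots,\tilde n_4)$ to match nuclear families with $(p,q)$-families (hence $(P1)\leftrightarrow(P1')$, $(P5)\leftrightarrow(P5')$), observe that $(P2)$--$(P4)$ are purely combinatorial and $(P6)$ is a linear statement, and deduce the size and ratio bounds from $p\sim q$ and $|n|\sim q|\tilde n|$. The paper does not write out a separate proof but states precisely these observations in the surrounding lemma and remark, so your write-up is essentially a transcription of the intended reasoning.
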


{\begin{remark}
We note that the first two inequalities come from the fact that $p\sim q$ and, for $n=(p\,j, q\,k)\in p\Z\times q\Z$, $\tilde{n}=(j, k)$
\[
C_1\, q^2\, |\tilde{n}|^2\le \min\{ p^2, q^2\} |\tilde{n}|^2 \le |n|^{2}\le \max\{ p^2, q^2 \} |\tilde{n}|^2\le C_2\, q^2\, |\tilde{n}|^2
\]
for some constants $C_1, C_2>0$.
\end{remark}}


Now we have to ensure that we can apply Proposition \ref{prop:wbnf} with the  set $\Lambda$ that we have constructed. The main point is to verify that $\Lambda$ satisfies the assumption \eqref{assump:wbnf}. The property $(P1')$ implies that (recall the definition of $\mathcal{A}(1)$ in  \eqref{def:A})
\[
\Omega_{p/q}(n_1, n_2, n_3, n_4)\neq 0 \qquad \forall (n_1, n_2, n_3, n_4)\in \mathcal{A}(1).
\]
We claim that this implies also
\[
\Omega_{\omega}(n_1, n_2, n_3, n_4)\neq 0 \qquad \forall (n_1, n_2, n_3, n_4)\in \mathcal{A}(1).
\]
More precisely we have the following stronger estimate.

\begin{lemma}\label{lem:L1} 
Let $(p, q)$ be a $\psi$-convergent of $\omega$ and assume that
\begin{equation}\label{assumption}
3^{2N}\,R^2\, \frac{\psi(q)}{q}\ll 1.
\end{equation}
Let $n_1, n_2, n_3\in \Lambda$, $n_4:=n_1-n_2+n_3\notin \Lambda$. Then
\[
|\Omega_{\omega}(n_1, n_2, n_3, n_4)|\gtrsim q^2.
\]
\end{lemma}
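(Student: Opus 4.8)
The plan is to compare $\Omega_\omega$ with $\Omega_{p/q}$ and exploit that the latter is bounded away from zero on $\mathcal{A}(1)$ by a clean arithmetic argument, while the difference $\Omega_\omega-\Omega_{p/q}$ is tiny thanks to the quality of the $\psi$-approximation. First I would write $n_i=(p\,\tilde\jmath_i,\, q\,\tilde k_i)$ for the three vertices in $\Lambda$ (using $\Lambda\subset p\Z\times q\Z$), so that for $n_4=n_1-n_2+n_3$ — even though $n_4\notin\Lambda$ — momentum conservation still forces $n_4\in p\Z\times q\Z$, hence $k_i\in q\Z$ for all four indices. Then
\[
\Omega_{p/q}(n_1,\dots,n_4)=\sum_i(-1)^{i+1}j_i^2+\frac{p^2}{q^2}\sum_i(-1)^{i+1}k_i^2
\]
is an integer multiple of... more precisely, writing $k_i=q\tilde k_i$ the second sum equals $p^2\sum_i(-1)^{i+1}\tilde k_i^2\in\Z$, and the first sum is also an integer, so $\Omega_{p/q}\in\Z$. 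Since $(n_1,n_2,n_3,n_4)\in\mathcal{A}(1)$ and $(P1')$ (together with the preceding discussion) guarantees $\Omega_{p/q}\neq 0$, we conclude $|\Omega_{p/q}(n_1,\dots,n_4)|\ge 1$.

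Next I would estimate the difference. By \eqref{def:Omega},
\[
\Omega_\omega-\Omega_{p/q}=\Bigl(\omega^2-\frac{p^2}{q^2}\Bigr)\sum_{i=1}^4(-1)^{i+1}k_i^2 .
\]
Since $(p,q)$ is a $\psi$-convergent, $|\omega-p/q|\le\psi(q)/q$, and $\omega,\,p/q$ are both $\sim 1$ (as $\omega\in[1,\infty)$ and $p/q$ is close to it), so $|\omega^2-p^2/q^2|\lesssim\psi(q)/q$. For the factor $\sum(-1)^{i+1}k_i^2$: each $|k_i|\lesssim|n_i|$, and by \eqref{def:R} every $n_i\in\Lambda$ satisfies $|n_i|\le C\,q\,3^N R$; the same bound holds for $n_4$ since $n_4=n_1-n_2+n_3$ gives $|n_4|\le 3\max|n_i|\lesssim q\,3^NR$. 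Hence $\bigl|\sum(-1)^{i+1}k_i^2\bigr|\lesssim q^2\,3^{2N}R^2$, and therefore
\[
\bigl|\Omega_\omega-\Omega_{p/q}\bigr|\lesssim \frac{\psi(q)}{q}\cdot q^2\,3^{2N}R^2 = q\cdot 3^{2N}R^2\,\psi(q)\cdot q \cdot \frac{1}{q}\ ;
\]
more cleanly, $|\Omega_\omega-\Omega_{p/q}|\lesssim q^2\bigl(3^{2N}R^2\,\psi(q)/q\bigr)$. Under hypothesis \eqref{assumption} the quantity in parentheses is $\ll 1$, so the difference is $\le \tfrac12 q^2$, say (absorbing the implicit constant into the smallness).

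Finally I would combine the two estimates. Since $|\Omega_{p/q}|\ge 1$ is only a constant lower bound while the error is $O(q^2)$ times something small, a naive triangle inequality is not enough — I need the lower bound on $|\Omega_{p/q}|$ to be comparable to $q^2$. The fix is that $|\Omega_{p/q}|\ge 1$ is too weak; instead observe $q^2\Omega_{p/q}=q^2\sum(-1)^{i+1}j_i^2+p^2\sum(-1)^{i+1}\tilde k_i^2$... hmm, this needs $j_i\in p\Z$ too, which holds, so writing $j_i=p\tilde\jmath_i$ gives $\Omega_{p/q}=p^2\,\Omega_1(\tilde n_1,\dots,\tilde n_4)$ where $\Omega_1$ is the integer combination on the unit torus; being nonzero, $|\Omega_1|\ge 1$, hence $|\Omega_{p/q}|\ge p^2\sim q^2$ (using $p\sim q$). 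Now
\[
|\Omega_\omega|\ge|\Omega_{p/q}|-|\Omega_\omega-\Omega_{p/q}|\gtrsim q^2 - q^2\cdot o(1)\gtrsim q^2,
\]
which is the claim. The main obstacle is exactly this last point: recognizing that the relevant lower bound on $|\Omega_{p/q}|$ is of order $q^2$ (not $1$), which comes from the anisotropic scaling \eqref{rescaling} making every frequency in $\Lambda$ lie in $p\Z\times q\Z$, so that $\Omega_{p/q}$ factors as $p^2$ times a nonzero integer; everything else is bookkeeping with \eqref{def:R} and the $\psi$-approximation bound.
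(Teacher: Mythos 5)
Your proof is correct and is essentially the paper's argument in slightly different dress. You split $\Omega_\omega=\Omega_{p/q}+(\Omega_\omega-\Omega_{p/q})$, observe $\Omega_{p/q}=p^2\,\Omega_1(\tilde n_1,\dots,\tilde n_4)$ with $|\Omega_1|\ge 1$ a nonzero integer (by $(P1')$, equivalently $(P1)$ for the pre-images), and bound the difference by $q\psi(q)\,3^{2N}R^2\ll q^2$; the paper does the same thing but writes it via the matrix factorization $B^TAB=p^2(I+J)$ with $J$ carrying the small entry of size $\psi(q)/q$, so the ``main term'' $p^2\langle\tilde n_1-\tilde n_2,\tilde n_2-\tilde n_3\rangle$ and the ``error'' $p^2\langle\tilde n_1-\tilde n_2,J(\tilde n_2-\tilde n_3)\rangle$ are exactly your $\Omega_{p/q}$ and $\Omega_\omega-\Omega_{p/q}$. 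The only cosmetic blemish is the false start where you first try $|\Omega_{p/q}|\ge 1$ before realizing the correct scale is $p^2\sim q^2$; the final argument is the right one.
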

\begin{proof}
Let us define
\[
B:=\begin{pmatrix}
p & 0\\
0 & q
\end{pmatrix}, \qquad A:=\begin{pmatrix}
1 & 0\\
0 & \omega^2
\end{pmatrix}.
\]
and let us consider $\tilde{n}_1, \tilde{n}_2, \tilde{n}_3\in \tilde{\Lambda}\subset \Z^2$ the pre-images of $n_1, n_2, n_3$ through the scaling \eqref{rescaling}, namely $n_i=B \tilde{n}_i$, $i=1, 2, 3$.  Thus the pre-image of $n_4$ is $\tilde{n}_4=\tilde{n}_1-\tilde{n}_2+\tilde{n}_3\in\Z^2$.
We have that
\begin{equation}\label{a}
\begin{aligned}
\Omega_{1}(\tilde{n}_1, \tilde{n}_2, \tilde{n}_3, \tilde{n}_4)&=|\tilde{n}_1|^2-|\tilde{n}_2|^2+|\tilde{n}_3^2|-|\tilde{n}_1-\tilde{n}_2+\tilde{n}_3|^2\\
&=2\,\langle \tilde{n}_1-\tilde{n}_2, \tilde{n}_2-\tilde{n}_3 \rangle.
\end{aligned}
\end{equation}
 By construction of the $\tilde{\Lambda}$ set (recall the property of closure $(P1)$)
\begin{equation}\label{b}
|\Omega_{1}(\tilde{n}_1, \tilde{n}_2, \tilde{n}_3, \tilde{n}_4)|\geq 1
\end{equation}
since $\Omega_{1}(\tilde{n}_1, \tilde{n}_2, \tilde{n}_3, \tilde{n}_4)$ is a non-zero integer. This implies that
\begin{equation}\label{osserv}
|\langle \tilde{n}_1-\tilde{n}_2, \tilde{n}_2-\tilde{n}_3 \rangle|\geq \frac{1}{2}.
\end{equation}
We observe that
\begin{align*}
|\Omega_{\omega}(n_1, n_2, n_3, n_4)| &=2|\langle n_1-n_2, A\,(n_2-n_3)  \rangle|=2 |\langle \tilde{n}_1-\tilde{n}_2, (B^T A B) (\tilde{n}_2-\tilde{n}_3) \rangle|
\end{align*}
with
\[
B^T A B=\begin{pmatrix}
p^2 & 0\\
0 & q^2 \omega^2
\end{pmatrix}.
\]
Moreover,
\[
B^T A B=p^2 \left( \mathrm{I}+J\right), \qquad J:=\begin{pmatrix}
0 & 0\\
0 & r
\end{pmatrix}
\]
where $\mathrm{I}$ is the identity matrix and, recalling that $p\sim q$, $r$ satisfies
\[
|r|\lesssim \frac{\psi(q)}{q}.
\]
We have then
\[
|\langle \tilde{n}_1-\tilde{n}_2, (B^T A B) (\tilde{n}_2-\tilde{n}_3) \rangle|\geq p^2 \left( |\langle \tilde{n}_1-\tilde{n}_2, \tilde{n}_2-\tilde{n}_3 \rangle|-|\langle \tilde{n}_1-\tilde{n}_2, J( \tilde{n}_2-\tilde{n}_3) \rangle|  \right)
\]
and, using also that $|\tilde{n}_i|\lesssim 3^N R$, $i=1\ldots 4$,
\[
|\langle \tilde{n}_1-\tilde{n}_2, J( \tilde{n}_2-\tilde{n}_3) \rangle| \lesssim |r|\, 3^{2N} R^2\lesssim 3^{2N}\,R^2\, \frac{\psi(q)}{q}.
\]
By \eqref{osserv} and the assumption \eqref{assumption},   we have that
\[
|\langle \tilde{n}_1-\tilde{n}_2, (B^T A B) (\tilde{n}_2-\tilde{n}_3) \rangle|\geq \frac{p^2}{4} |\langle \tilde{n}_1-\tilde{n}_2, \tilde{n}_2-\tilde{n}_3 \rangle|.
\]
By \eqref{a}, \eqref{b}, we conclude that
\[
|\Omega_{\omega}(n_1, n_2, n_3, n_4)|\geq \frac{p^2}{2} |\Omega_1(\tilde{n}_1, \tilde{n}_2, \tilde{n}_3, \tilde{n}_4)|\geq \frac{p^2}{2}\gtrsim q^2 .
\]
\end{proof}

Therefore, we can apply Proposition \ref{prop:wbnf} considering the set $\Lambda$ given by Theorem \ref{thm:gen} provided that \eqref{assumption} holds. We shall choose later a suitable $q$ that satisfies this condition (recall that $\psi$ is decreasing, see Definition \ref{def:psiapprox}).

Hence we have a change of coordinates $\Gamma$ that puts the Hamiltonian $\mathcal{H}$ in \eqref{def:calH} in normal form
\[
\mathcal{H}\circ \Gamma=H^{(2)}+\mathcal{H}^{(4, 0)}+\mathcal{H}^{(4, \geq 2)}+\mathcal{R},
\]
where $\mathcal{R}$ shall be considered as a small remainder (see \eqref{bound:GammaId2} below).
\begin{remark}
We point out that $\mathcal{H}^{(4, 0)}$ is NOT a resonant Hamiltonian, namely $\{ H^{(2)}, \mathcal{H}^{(4, 0)}\}\neq 0$. However it commutes with the Hamiltonian \eqref{def:lambdatilde}.
\end{remark}

By Lemma \ref{lem:L1} and the fact that $\Lambda$ is finite we have that
\begin{equation}\label{bound:lowL1}
\mathcal{L}_1\gtrsim q^2.
\end{equation}
Then, the bounds \eqref{bound:GammaId} on the Birkhoff map and \eqref{bound:remainder} on the remainder of the Birkhoff procedure are 
\begin{equation}\label{bound:GammaId2}
\begin{split}
\sup_{\beta\in B(\eta)}\| \Gamma(\beta)- \beta \|_{\ell^1}&\lesssim q^{-2}\, \eta^3\\
\sup_{\beta\in B(\eta)} \| X_{\mathcal{R}}(\beta)\|_{\ell^1}&\lesssim q^{-2}\, \eta^5+q^{-4} \, \eta^7,
\end{split}
\end{equation}
where $\eta>0$ is given by Proposition \ref{prop:wbnf}.

\begin{remark}\label{rem:eta}
We recall that the smallness of $\eta$ is given by a relation like $\eta^2 \mathcal{L}_1^{-1}\ll 1$. By \eqref{bound:lowL1} and the fact that $q\gg 1$ we can consider $\eta>0$ as a small universal constant.
\end{remark}

\section{Quasi-resonant model}\label{sec:resmodel}
In this section we construct an orbit of the quasi-resonant model that displays the desired energy exchange behavior.
We introduce the rotating coordinates (recall \eqref{def:eigen})
\begin{equation}\label{def:rotcoord}
\beta_{n}=r_{n}\,e^{\mathrm{i} \lambda(n)\,t}.
\end{equation}
The Hamiltonian vector field associated to the Hamiltonian \eqref{def:Htilde} expressed in  rotating coordinates is
\begin{equation*}
\cX:= X_{\mathtt{H}^{(4, 0)}}+ X_{\mathtt{H}^{(4, \geq 2)}}+X_{\mathcal{R}'}
 \end{equation*}
with (recall \eqref{def:A})
\begin{align*}
&\mathtt{H}^{(4, 0)}:=\mathcal{H}^{(4, 0)}(\{ r_{n}\,e^{\mathrm{i} \lambda(n)t} \}_{n\in\Lambda}),\\
&\mathtt{H}^{(4, \geq 2)}:=\mathcal{H}^{(4, \geq 2)}(\{ r_{n}\,e^{\mathrm{i} \lambda(n)t} \}_{n\in\Z^2}),\\
&\mathcal{R}'(t)=\mathcal{R} (\{ r_{n}\,e^{\mathrm{i} \lambda(n)t} \}_{n\in\Z^2}).
\end{align*}
Defining
\begin{equation}\label{def:N}
\mathcal{N}:=\mathcal{H}^{(4, 0)}+\mathcal{H}^{(4, \geq 2)}\qquad \text{ and }\qquad {\mathcal{Q}^{(4, d)}}:=\mathtt{H}^{(4, d)}-\mathcal{H}^{(4, d)}, \qquad d=0, 2, 3, 4,
\end{equation}
we can write $\cX$ as 
\begin{equation}\label{vecfrot}
\cX=X_{\mathcal{N}}+X_{\mathcal{Q}^{(4, 0)}}+X_{\mathcal{Q}^{(4, \geq 2)}}+X_{\mathcal{R}'}.
\end{equation}
The Hamiltonian $\mathcal{N}$ gives the first order of the Hamiltonian in rotating coordinates. Since $\mathcal{H}^{(4, 0)}$ only possesses monomials suported in $\Lambda$ and $ \mathcal{H}^{(4, \geq 2)}$ only has monomials in $\mathcal{A}(2)$, $\mathcal{A}(3)$ and $\mathcal{A}(4)$ (see \eqref{def:A}), the subspace
\[
\mathcal{V}_{\Lambda}:=\{ r\colon \Z^2\to\C : r_{n}=0, \,\,\, n\notin \Lambda  \}
\]
is invariant. We analyze the dynamics of the Hamiltonian system defined by $\mathcal{N}$ on this subspace.

\begin{remark}\label{rem}
Observe that the vector fields $X_{\mathcal{H}^{(4, \geq 2)}}$ and $X_{\mathtt{H}^{(4, \geq 2)}}$ vanish on $\mathcal{V}_{\Lambda}$. 
\end{remark}
Reasoning as in \cite{CKSTT}, one can easily see that, for a set $\Lambda$ with the properties $(P1'), (P2), (P3), (P4), (P5'), (P6)$, the equation for $\mathcal{N}$ restricted to the subspace $\mathcal{V}_{\Lambda}$ is given by
\begin{equation}\label{eq:spouse}
-\mathrm{i} \dot{r}_n=-r_n |r_n|^2+2 r_{n_{\mathrm{child}_1}}\,r_{n_{\mathrm{child}_2}} \,\overline{r_{n_{\mathrm{spouse}}}} +2 r_{n_{\mathrm{parent}_1}}\,r_{n_{\mathrm{parent}_2}}\overline{r_{n_{\mathrm{sibling}}}},
\end{equation}
where the ``parental relations'' refer to the $(p,q)$-families in $\Lambda$ (see Definition \ref{def:pqfamily}).

\begin{lemma}[Intragenerational equality \cite{CKSTT}]
Consider the subspace
\[
\widetilde{\mathcal{V}_{\Lambda}}:=\left\{ r\in \mathcal{V}_{\Lambda} : r_{n}=r_{n'}\,\,\,\forall n, n'\in \Lambda_j\,\,\text{for some}\,\,j\in \{ 1, \dots, N \} \right\}
\]
where all the members of a generation take the same value. Then $\widetilde{\mathcal{V}_{\Lambda}}$ is invariant under the flow of \eqref{eq:spouse}.
\end{lemma}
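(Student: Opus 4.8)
The plan is to verify directly that the subspace $\widetilde{\mathcal{V}_\Lambda}$ is invariant under the flow of \eqref{eq:spouse}, which amounts to checking that if all modes in a common generation carry the same value at some time, then their time derivatives coincide as well. Concretely, I would fix a generation $\Lambda_j$ and two modes $n, n' \in \Lambda_j$, assume $r \in \widetilde{\mathcal{V}_\Lambda}$ (so $r_m = r_{m'}$ whenever $m, m'$ lie in the same generation), and show $\dot r_n = \dot r_{n'}$ by inspecting the right-hand side of \eqref{eq:spouse} term by term.

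The first term $-r_n|r_n|^2$ obviously agrees for $n$ and $n'$ since $r_n = r_{n'}$. For the second term, by property $(P2)$ each $n \in \Lambda_j$ (with $j \le N-1$) has a unique spouse $n_{\mathrm{spouse}} \in \Lambda_j$ and unique children $n_{\mathrm{child}_1}, n_{\mathrm{child}_2} \in \Lambda_{j+1}$; since $r$ is constant on $\Lambda_j$ we have $r_{n_{\mathrm{spouse}}} = r_n = r_{n'} = r_{n'_{\mathrm{spouse}}}$, and since $r$ is constant on $\Lambda_{j+1}$ the two children factors $r_{n_{\mathrm{child}_1}} r_{n_{\mathrm{child}_2}}$ take the same value for $n$ and for $n'$. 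If $j = N$, the nuclear-family term involving children is simply absent, consistently for all modes of $\Lambda_N$. The third term is handled symmetrically using property $(P3)$: for $j \ge 2$ each $n \in \Lambda_j$ has unique parents in $\Lambda_{j-1}$ and a unique sibling in $\Lambda_j$, so the factor $r_{n_{\mathrm{parent}_1}} r_{n_{\mathrm{parent}_2}} \overline{r_{n_{\mathrm{sibling}}}}$ depends only on the generation index $j$; for $j = 1$ this term drops out. Hence $\dot r_n = \dot r_{n'}$, so the vector field of \eqref{eq:spouse} is tangent to $\widetilde{\mathcal{V}_\Lambda}$ and the subspace is invariant.

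The only genuinely delicate point is the bookkeeping of the ``parental relations'': one must be sure that the spouse/children/parents/sibling of a mode are genuinely uniquely determined \emph{and} that the property ``lies in generation $\Lambda_i$'' is respected — this is exactly what properties $(P2)$, $(P3)$, $(P4)$ guarantee, together with $(P5')$ and $(P6)$ which ensure no spurious additional $(p,q)$-families or momentum relations contribute extra terms to \eqref{eq:spouse}. I would therefore phrase the argument as: the equation \eqref{eq:spouse} is manifestly ``generation-equivariant'' because every monomial on its right-hand side is built from factors indexed by generations $\Lambda_{j-1}, \Lambda_j, \Lambda_{j+1}$ only, with multiplicities (number of children, of parents) independent of the particular mode within $\Lambda_j$; restricting such a system to the diagonal $\{r_n = r_{n'} \ \forall n,n' \in \Lambda_j\}$ yields a closed system, which is the assertion. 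Since \eqref{eq:spouse} and properties $(P1')$–$(P6)$ are available from the preceding sections, no further estimates are needed, and the lemma follows as in \cite{CKSTT}.
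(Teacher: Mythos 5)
Your proof is correct and follows the standard direct-verification argument from \cite{CKSTT}; the paper itself states this lemma without proof, merely citing that reference, so there is no in-paper proof to compare against. Your term-by-term check using $(P2)$ and $(P3)$ — namely that spouse and sibling lie in $\Lambda_j$, children in $\Lambda_{j+1}$, parents in $\Lambda_{j-1}$, so each summand in \eqref{eq:spouse} depends only on the generation index once $r$ is constant on generations — together with the correct handling of the boundary cases $j=1$ and $j=N$, is exactly the intended argument, and your remark that $(P5')$ and $(P6)$ are what guarantee \eqref{eq:spouse} has no extra terms is accurate (though that belongs to the derivation of \eqref{eq:spouse} rather than to this invariance step).
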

To determine the restriction of \eqref{eq:spouse} to $\widetilde{\mathcal{V}_{\Lambda}}$ we set
\begin{equation*}
b_i=r_n \qquad \text{for any}\qquad n\in \Lambda_i.
\end{equation*}
The equation \eqref{eq:spouse} restricted on $\widetilde{\mathcal{V}_{\Lambda}}$ reads as
\begin{equation}\label{eq:toy}
\dot{b}_i=-\mathrm{i} b_i^2 \overline{b_i}+2\mathrm{i} \overline{b_i} (b^2_{i-1}+b^2_{i+1}), \qquad i=1, \dots, N.
\end{equation}
The following result has been proved in \cite{GuardiaK12}.
\begin{theorem}\label{thm:orbit}
Fix a large $\gamma\gg 1$. Then for any large enough $N$ and $\delta=\exp(-\gamma N)$, there exists a trajectory $b(t)$ of the system \eqref{eq:toy}, $\sigma>0$ independent of $\gamma$ and $N$ and $T_0>0$ such that
\begin{align*}
|b_3(0)|>1-\delta^{\sigma},& \qquad |b_j(0)|<\delta^{\sigma} \quad\,\,\, \mathrm{for}\,\,j\neq 3,\\
|b_{N-1}(T_0)|>1-\delta^{\sigma},& \qquad |b_j(T_0)|<\delta^{\sigma} \quad  \mathrm{for}\,\,j\neq N-1.
\end{align*}
Moreover there exists a constant $\mathbb{K}>0$ independent of $N$ such that $T_0$ satisfies
\begin{equation}\label{bound:timeT0}
0<T_0\leq  \mathbb{K}\,N\,\log(\delta^{-1})=\mathbb{K} \gamma N^2.
\end{equation}
\end{theorem}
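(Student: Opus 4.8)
\emph{Strategy.} The plan is to follow the scheme of \cite{CKSTT} for the toy model, incorporating the quantitative refinements of \cite{GuardiaK12} in order to control the transit time. The starting point is that \eqref{eq:toy} possesses, for each $i$, the invariant circle $\mathbb{T}_i:=\{b : b_j=0\ \text{for}\ j\neq i,\ |b_i|=1\}$, on which the flow is the rigid rotation $b_i(t)=e^{-\mathrm{i}t}b_i(0)$, and, more importantly, for each $i$ the two-generation subspace $\{b_j=0 : j\neq i,i+1\}$ is invariant; on it \eqref{eq:toy} reduces to a completely integrable Hamiltonian system, the conserved quantities being the mass $|b_i|^2+|b_{i+1}|^2$ and the restricted Hamiltonian. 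A phase-plane analysis of this two-dimensional system shows that $\mathbb{T}_i$ and $\mathbb{T}_{i+1}$ are hyperbolic within the subspace and are joined by an explicit heteroclinic orbit that slides all the mass from generation $i$ to generation $i+1$. I would first write this heteroclinic (or at least its asymptotics) and check that, measured between the level sets $\{|b_{i+1}|=\delta^{\sigma}\}$ and $\{|b_i|=\delta^{\sigma}\}$, its transit time is $O(\log\delta^{-1})$, because the approach to and departure from the hyperbolic circles is exponential in time.

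\emph{Local hyperbolic analysis.} The second step is to linearise the full system \eqref{eq:toy} at a point of $\mathbb{T}_i$ in coordinates that remove the rigid rotation. Only the neighbouring coordinates $b_{i-1},b_{i+1}$ feel the instability: the directions $b_j$ with $|j-i|\geq 2$ are, to leading order, neutral, and the tangent to the circle is a zero mode, while the $(b_{i-1},b_{i+1})$ block is a genuine saddle. Hence each $\mathbb{T}_i$ carries, inside a fixed small neighbourhood, local stable and unstable manifolds that are $C^1$-close to their linear counterparts, and the heteroclinic from $\mathbb{T}_i$ to $\mathbb{T}_{i+1}$ lies (away from its endpoints) in $W^u(\mathbb{T}_i)\cap W^s(\mathbb{T}_{i+1})$, transversally within the relevant centre-unstable/centre-stable directions. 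One also needs the hyperbolicity exponents and the angle between $W^u(\mathbb{T}_i)$ and $W^s(\mathbb{T}_{i+1})$ to be bounded below by a constant independent of $i$.

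\emph{Concatenation.} Fix $\delta=\exp(-\gamma N)$ with $\gamma$ large. Around each $\mathbb{T}_i$ choose a ``window'' --- a product of a small disc in the hyperbolic directions with a neutral piece --- of size a suitable power of $\delta$; the chain $\mathbb{T}_3\to\mathbb{T}_4\to\cdots\to\mathbb{T}_{N-1}$ of heteroclinics produces a sequence of such windows, and using the inclination ($\lambda$-) lemma together with the local hyperbolic estimates one checks that the forward image of the outgoing face of the window at $\mathbb{T}_i$ stretches across (``covers'') the incoming face of the window at $\mathbb{T}_{i+1}$. The standard topological covering argument then yields an orbit $b(t)$ visiting all windows in order, with the stated concentration $|b_3(0)|>1-\delta^{\sigma}$ and $|b_{N-1}(T_0)|>1-\delta^{\sigma}$ and with $\sigma$ uniform in $\gamma,N$. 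Since each transition (local passage near $\mathbb{T}_i$ plus one heteroclinic slide) costs time $O(\log\delta^{-1})$ and there are $N-4$ of them, the total time obeys $T_0\leq\mathbb{K}\,N\log\delta^{-1}=\mathbb{K}\gamma N^2$, which is \eqref{bound:timeT0}.

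\emph{Main obstacle.} The delicate point is purely quantitative: one must verify that the hyperbolicity constants of the $\mathbb{T}_i$, the angles between $W^u(\mathbb{T}_i)$ and $W^s(\mathbb{T}_{i+1})$, and the sizes of the windows do not degenerate as $i$ ranges over $1,\dots,N$, so that the covering relations survive all $\sim N$ compositions and the per-step time is genuinely $O(\log\delta^{-1})$ with a constant independent of $i$ and $N$. This is what forces the choice $\delta=e^{-\gamma N}$ with $\gamma$ large, making the windows exponentially small so the neglected nonlinear terms remain subdominant along the whole chain; it is precisely the analysis carried out in \cite{GuardiaK12}, to which we refer for the complete argument.
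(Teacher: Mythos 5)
The paper does not give its own proof of this theorem; it simply cites \cite{GuardiaK12} (with the erratum \cite{GuardiaK12Err}), and your sketch is a faithful high-level account of that proof: the invariant circles $\mathbb{T}_i$, the slider heteroclinics living on the two-generation integrable subsystems, the saddle structure in the $b_{i\pm1}$-directions with neutral transverse directions, the concatenation via covering relations and the inclination lemma, and the $O(\log\delta^{-1})$ per-step time coming from the exponential approach to and departure from each saddle, so that $T_0\lesssim N\log\delta^{-1}=\gamma N^2$. You also correctly single out the genuinely delicate quantitative point — uniformity over $i$ and $N$ of the hyperbolicity exponents, the splitting angles between $W^u(\mathbb{T}_i)$ and $W^s(\mathbb{T}_{i+1})$, and the window sizes $\sim\delta^{\sigma}$ with $\delta=e^{-\gamma N}$ — which is precisely the refinement \cite{GuardiaK12} supplies beyond the original CKSTT construction.
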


 Since $\mathcal{N}$ in \eqref{def:N} is a homogenous Hamiltonian of degree $4$ we can consider the scaled solution
\begin{equation}\label{lambda}
b^{\lambda}(t)=\lambda^{-1} b(\lambda^{-2} t),
\end{equation}
where $b(t)$ is the trajectory given by Theorem \ref{thm:orbit}.
%
%
Then
\begin{equation}\label{def:rlambda}
r^{\lambda}(t, x, y)=\sum_{n\in \Z^2} r^{\lambda}_n(t)\,e^{\mathrm{i} (j x+\omega k y)}, \qquad r^{\lambda}_n(t):=\begin{cases}
b_i^{\lambda}(t) \qquad n\in\Lambda_i, \quad i=1, \dots, N,\\
0 \qquad \,\, \,\, \quad n\notin \Lambda
\end{cases}
\end{equation}
is a solution of $\mathcal{N}$ in \eqref{def:N}.
The life-span of $r^{\lambda}(t)$ is 
\begin{equation}\label{def:timeT}
T:=\lambda^{2} T_0\sim \lambda^2 \gamma N^2.
\end{equation}
 By Theorem \ref{thm:orbit}, arguing as in Lemma $9.2$ in \cite{GuardiaHP16}, we have
\begin{equation}\label{bound:base}
\sup_{t\in [0, T]} \| r^{\lambda}(t) \|_{\ell^1}\lesssim |\Lambda| \lambda^{-1}, \qquad |\Lambda|=N 2^{N-1}.
\end{equation}

\section{Approximation argument}\label{sec:approxarg}
In this section we prove that there exists a solution of \eqref{vecfrot} that stays close to $r^{\lambda}(t)$ in the $\ell^1$-topology for time $t\in [0, T]$.
To this end, we give an upper bound on the resonant combinations among modes in $\Lambda$. This will be useful in estimating the error generated by considering a quasi-resonant model instead of a resonant one.
\begin{lemma}\label{lem:U0}
 Assume that $\omega$ is $\psi$-approximable.
Let $(n_1, n_2, n_3, n_4)$ be a $(p, q)$-family (recall Definition \ref{def:pqfamily}). Then
\[
\left| j_1^2-j_2^2+j_3^2-j_4^2+\omega^2 (k_1^2-k_2^2+k_3^2-k_4^2) \right| \lesssim  \,  3^{2N} R^2 q\psi(q).
\]
Therefore, the constant $\mathcal{U}_0$ in \eqref{def:UL} satisfies
\[
\mathcal{U}_0\lesssim  \, 3^{2N} R^2q\psi(q).
\]
\end{lemma}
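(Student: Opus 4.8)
The plan is to mimic the proof of Lemma \ref{lem:L1}, but now extracting the \emph{size} of $\Omega_\omega$ on a $(p,q)$-family rather than a lower bound off such a family. The starting point is the identity $\Omega_\omega(n_1,n_2,n_3,n_4)=2\langle n_1-n_2,\,A(n_2-n_3)\rangle$ with $A=\mathrm{diag}(1,\omega^2)$, which together with the substitution $n_i=B\tilde n_i$, $B=\mathrm{diag}(p,q)$, gives
\[
\Omega_\omega(n_1,n_2,n_3,n_4)=2\langle \tilde n_1-\tilde n_2,\,(B^TAB)(\tilde n_2-\tilde n_3)\rangle,
\qquad B^TAB=\begin{pmatrix} p^2 & 0\\ 0 & q^2\omega^2\end{pmatrix}.
\]
The key algebraic observation is that $(n_1,n_2,n_3,n_4)$ being a $(p,q)$-family means precisely $\Omega_{p/q}(n_1,n_2,n_3,n_4)=0$, which, running the same computation with $\omega$ replaced by $p/q$, is the statement $\langle \tilde n_1-\tilde n_2,\,(B^TA'B)(\tilde n_2-\tilde n_3)\rangle=0$ where $A'=\mathrm{diag}(1,p^2/q^2)$, i.e. $p^2\Omega_1(\tilde n_1,\dots,\tilde n_4)=0$. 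So the ``main term'' in $\Omega_\omega$ vanishes identically on a $(p,q)$-family, and what remains is exactly the perturbation.

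Concretely, I would write $B^TAB = B^TA'B + p^2 J$ with $J=\mathrm{diag}(0,r)$ and $r = q^2\omega^2/p^2 - 1$. Since $(p,q)$ is a $\psi$-convergent of $\omega$, one has $|\omega - p/q|\le \psi(q)/q$, and using $p\sim q$ (hence $p/q$ bounded above and below) this gives $|r| = |q^2\omega^2 - p^2|/p^2 = |q\omega - p|\,|q\omega+p|/p^2 \lesssim \psi(q)/q \cdot q /q \sim \psi(q)/q$ — actually more carefully $|q\omega-p| = q|\omega-p/q| \le \psi(q)$ and $|q\omega+p|\sim q$, so $|r| \lesssim \psi(q)/q \cdot q \cdot q^{-1}$... let me just record $|r|\lesssim \psi(q)/q$. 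Then, using the $(p,q)$-family relation to kill the $B^TA'B$ piece,
\[
|\Omega_\omega(n_1,\dots,n_4)| = 2p^2\,|\langle \tilde n_1-\tilde n_2,\,J(\tilde n_2-\tilde n_3)\rangle| \lesssim q^2 \cdot |r|\cdot |\tilde n_2-\tilde n_3|\cdot|\tilde n_1 - \tilde n_2|.
\]
Finally I invoke the size bound from Theorem \ref{thm:Iteam}, namely $|\tilde n_i| \lesssim 3^N R$ for all $i$, so $|\tilde n_i - \tilde n_j|\lesssim 3^N R$, giving $|\Omega_\omega|\lesssim q^2 \cdot (\psi(q)/q) \cdot 3^{2N}R^2 = 3^{2N}R^2 q\,\psi(q)$, which is the claim. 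The bound on $\mathcal{U}_0 = \sup_{\mathcal{A}(0)}|\Omega_\omega|$ then follows because every quartet in $\mathcal{A}(0)$ with $\Omega_{p/q}=0$ is a $(p,q)$-family by $(P5')$ — wait, $\mathcal{A}(0)$ consists of \emph{all} momentum-preserving quartets inside $\Lambda$, not only the resonant ones, so I should be slightly careful: actually $\mathcal{U}_0$ is the sup over $\mathcal{A}(0)$, and by property $(P6)$ any nontrivial momentum relation among four points of $\Lambda$ forces them to be a family, and a family is a $(p,q)$-family; trivial relations give $\Omega_\omega$ possibly nonzero but then two indices coincide and $\Omega_\omega$ collapses to a difference of two eigenvalues... hmm, this needs the observation that $\mathcal{H}^{(4,0)}$ as it enters the model only involves family quartets, so in fact the relevant sup is over $(p,q)$-families. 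I will phrase it as: for any quartet contributing to $\mathcal{H}^{(4,0)}$ (equivalently, any $(p,q)$-family, by $(P6)$ and $(P5')$) the above bound holds, hence $\mathcal{U}_0\lesssim 3^{2N}R^2 q\psi(q)$.

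The main obstacle is really just the bookkeeping of which quartets the constant $\mathcal{U}_0$ ranges over and making sure the ``$B^TA'B$ piece vanishes'' step is legitimate — i.e. that the $(p,q)$-family relation $\Omega_{p/q}=0$ is exactly the vanishing of $\langle \tilde n_1-\tilde n_2, (B^TA'B)(\tilde n_2 - \tilde n_3)\rangle$ and not merely of $\Omega_1(\tilde n_1,\dots,\tilde n_4)$ up to the $p^2$ factor; these coincide by \eqref{eq:fampqfam}, so there is no issue. Everything else is the same Cauchy–Schwarz-and-size-bound routine as in Lemma \ref{lem:L1}, just run in the reverse direction to produce an upper bound.
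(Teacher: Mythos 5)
Your argument is correct and follows essentially the same route as the paper's: both subtract $\Omega_{p/q}=0$ from $\Omega_\omega$, bound the residual factor $\lvert\omega^2-p^2/q^2\rvert\lesssim\psi(q)/q$ using the $\psi$-convergent property, and bound the $k$-frequency combination by $\max_{n\in\Lambda}|n|^2\lesssim q^2 3^{2N}R^2$; your detour through the matrix identity $\Omega_\omega=2\langle n_1-n_2,A(n_2-n_3)\rangle$ and the decomposition $B^TAB=B^TA'B+p^2J$ is just a repackaging of the paper's one-line computation $\Omega_\omega-\Omega_{p/q}=(\omega^2-p^2/q^2)(k_1^2-k_2^2+k_3^2-k_4^2)$. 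One genuine value-add in your write-up: you correctly flag that $\mathcal{U}_0$ is a supremum over all of $\mathcal{A}(0)$, not only over $(p,q)$-families, and close the gap via $(P6)$ (nontrivial momentum relations are families) together with the fact that trivial quartets have $\Omega_\omega=0$ — the paper's justification (``$\Lambda$ is a finite set'') elides this point.
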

\begin{proof}
By definition of $(p, q)$-family we have $\Omega_{p/q}(n_1, n_2, n_3, n_4)=0$. Then,
\begin{align*}
\left|\Omega_{\omega}(n_1, n_2, n_3, n_4)\right|&=\left| \Omega_{\omega}(n_1, n_2, n_3, n_4)-\Omega_{p/q}(n_1, n_2, n_3, n_4)\right|\\
&=\left( \omega^2-\frac{p^2}{q^2} \right) \left| k_1^2-k_2^2+k_3^2-k_4^2 \right|\lesssim  \max_{n\in\Lambda}\{ |n|^2 \}\,\frac{\psi(q)}{q}\,.
\end{align*}
Hence, it is enough to use the bound \eqref{def:R}. The estimate on $\mathcal{U}_0$ comes from the fact that $\Lambda$ is a finite set.
\end{proof}

Now we show that, under appropriate conditions on the parameters involved in the proof, that is $N, q$ in \eqref{def:psiapprox} and $\lambda$ in \eqref{lambda}, there are solutions of \eqref{vecfrot} that shadow, in the $\ell^1$-norm, the trajectory $r^{\lambda}$ on the time interval $[0, T]$. We shall rely on the fact that our choices of the function $\psi$ in \eqref{choice:psi:1},\eqref{choice:psi:2} are such that $q\, \psi(q)$ is decreasing. 

This approximation argument is performed in a ball centered at the origin of $\ell^1$ with radius $O( \| r^{\lambda}(0)\|_{\ell^1})$.
Hence, to guarantee that we work in the ball $B(\eta)$, with $\eta$ given by Proposition \ref{prop:wbnf} and Remark \ref{rem:eta}, where we can apply Proposition \ref{prop:wbnf}, we need to impose that
\[
\lambda^{-1} N 2^N\ll 1.
\]
This will be implied (for $N$ large enough) by the first condition in \eqref{cond:final} required by the following proposition.


\begin{proposition}[Approximation argument]\label{prop:approxarg}
Fix $\epsilon>0$ small. Then, for $N\gg 1$ and $\lambda, q$ such that
\begin{equation}\label{cond:final}
\lambda\geq \exp(5^N),\qquad q\,\psi(q)\le N^{-7} 72^{-N} \lambda^{-2(1+\epsilon)} R^{-2}
\end{equation}
 the following holds.
 
If $r(t)$ is a solution of \eqref{vecfrot} such that
\begin{equation}\label{assump:initial}
\| r(0)-r^{\lambda}(0) \|_{\ell^1}\le \lambda^{-(1+2\epsilon)}
\end{equation}
then
\begin{equation}\label{bound:close}
\sup_{t\in [0, T]}\| r(t)-r^{\lambda}(t) \|_{\ell^1}\le \lambda^{-(1+\epsilon)}
\end{equation}
where $T$ is in \eqref{def:timeT}.
\end{proposition}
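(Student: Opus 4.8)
The plan is to prove \eqref{bound:close} by a bootstrap (continuity) argument for the difference $w(t):=r(t)-r^{\lambda}(t)$ measured in $\ell^1$, closed by a Gr\"onwall inequality. First I would set
\[
T^{\ast}:=\sup\Big\{t\in[0,T]\ :\ \|w(s)\|_{\ell^1}\le\lambda^{-(1+\epsilon)}\ \text{ for all }s\in[0,t]\Big\},
\]
which is positive because $\|w(0)\|_{\ell^1}\le\lambda^{-(1+2\epsilon)}<\lambda^{-(1+\epsilon)}$, and then show that $T^{\ast}<T$ is impossible. On $[0,T^{\ast}]$ one has, by \eqref{bound:base} and the bootstrap assumption, $\|r(t)\|_{\ell^1}\le\rho$ with $\rho\lesssim N2^{N}\lambda^{-1}$; the first bound in \eqref{cond:final} guarantees $\rho\ll\eta$, the universal radius of Proposition \ref{prop:wbnf} (Remark \ref{rem:eta}), so the normal form and the estimates \eqref{bound:GammaId2} are valid along the whole orbit.

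Next I would derive a differential inequality for $w$. Since $r^{\lambda}$ solves $\dot r=X_{\mathcal N}(r)$ and stays in $\mathcal V_{\Lambda}$, subtracting this from \eqref{vecfrot} gives
\[
\dot w=\big(X_{\mathcal N}(r)-X_{\mathcal N}(r^{\lambda})\big)+X_{\mathcal Q^{(4,0)}}(t,r)+X_{\mathcal Q^{(4,\geq2)}}(t,r)+X_{\mathcal R'}(t,r),
\]
and I would bound the four terms in $\ell^1$ via Lemma \ref{lem:young}. The first is $\lesssim\rho^2\|w\|_{\ell^1}$ by trilinearity and $\bral\mathcal N\brar\lesssim1$; for the part arising from $\mathcal H^{(4,\geq2)}$ I would use that it is Fourier supported on $\mathcal A(2)\cup\mathcal A(3)\cup\mathcal A(4)$, so every component of its vector field carries a factor indexed outside $\Lambda$, and since $\mathrm{supp}\,r^{\lambda}\subset\Lambda$ this yields a bound $\lesssim\rho^{2}\|\mathbf 1_{\Lambda^{c}}w\|_{\ell^1}\le\rho^{2}\|w\|_{\ell^1}$, the same reasoning applying to $X_{\mathcal Q^{(4,\geq2)}}(t,r)$ (whose coefficients have modulus $\lesssim1$). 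The key term is $X_{\mathcal Q^{(4,0)}}(t,r)$: by \eqref{def:N} the coefficients of $\mathcal Q^{(4,0)}(t)$ equal those of $\mathcal H^{(4,0)}$ times $e^{\mathrm i\Omega_{\omega}(n_1,\dots,n_4)t}-1$ and are supported on $(p,q)$-families, so $|e^{\mathrm i\theta}-1|\le|\theta|$ together with Lemma \ref{lem:U0} gives $\bral\mathcal Q^{(4,0)}(t)\brar\lesssim\mathcal U_{0}\,t\lesssim3^{2N}R^{2}q\psi(q)\,t$, whence $\|X_{\mathcal Q^{(4,0)}}(t,r)\|_{\ell^1}\lesssim3^{2N}R^{2}q\psi(q)\,t\,\rho^{3}$. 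Finally $X_{\mathcal R'}$ is homogeneous of degree $\geq5$, so \eqref{bound:GammaId2} gives $\|X_{\mathcal R'}(t,r)\|_{\ell^1}\lesssim q^{-2}\rho^{5}\le\rho^{5}$. Collecting these, $\|\dot w(t)\|_{\ell^1}\le C\rho^{2}\|w(t)\|_{\ell^1}+C\big(3^{2N}R^{2}q\psi(q)\,t\,\rho^{3}+\rho^{5}\big)$ on $[0,T^{\ast}]$.

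Then I would integrate by Gr\"onwall, obtaining
\[
\|w(T^{\ast})\|_{\ell^1}\le e^{C\rho^{2}T}\Big(\|w(0)\|_{\ell^1}+C\,3^{2N}R^{2}q\psi(q)\,\rho^{3}T^{2}+C\rho^{5}T\Big),
\]
and substitute $\rho\lesssim N2^{N}\lambda^{-1}$ and $T\sim\lambda^{2}\gamma N^{2}$ from \eqref{def:timeT}. The exponent becomes $\rho^{2}T\lesssim\gamma N^{4}4^{N}$, a constant \emph{independent of $\lambda$}. Using the second bound in \eqref{cond:final}, the identity $72=2^{3}\cdot3^{2}$, and the cancellation of the $R^{2}$ and $N^{7}$ factors, the quasi-resonant error obeys $3^{2N}R^{2}q\psi(q)\,\rho^{3}T^{2}\lesssim\gamma^{2}\lambda^{-(1+2\epsilon)}$, the remainder error obeys $\rho^{5}T\lesssim N^{7}2^{5N}\gamma\lambda^{-3}$, and $\|w(0)\|_{\ell^1}\le\lambda^{-(1+2\epsilon)}$. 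The first bound in \eqref{cond:final} gives $\log\lambda\ge5^{N}$, and since $5^{N}\gg N^{4}4^{N}$ (as $(5/4)^{N}\to\infty$), for $N$ large in terms of $\epsilon$ and $\gamma$ the factor $e^{C\gamma N^{4}4^{N}}$ is dominated by $\lambda^{\epsilon/2}$, so the right-hand side above is $\ll\lambda^{-(1+\epsilon)}$. Hence $\|w(T^{\ast})\|_{\ell^1}<\lambda^{-(1+\epsilon)}$, which by continuity contradicts the maximality of $T^{\ast}$ unless $T^{\ast}=T$; this proves \eqref{bound:close}.

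I expect the main obstacle to be precisely this competition in the Gr\"onwall step. The crude Lipschitz constant $\rho^{2}$ of $X_{\mathcal N}$, integrated over the long life-span $T$, produces the amplification $e^{C\gamma N^{4}4^{N}}$, which is enormous but, crucially, does not depend on $\lambda$, so it can be absorbed only by taking $\lambda$ super-exponentially large in $N$; this is exactly the role of $\lambda\ge\exp(5^{N})$. In the same way, the error committed by replacing the resonant model of \cite{CKSTT} by the quasi-resonant one is controllable only because the phases can be linearized, $|e^{\mathrm i\Omega_{\omega}t}-1|\le|\Omega_{\omega}|t$, and $|\Omega_{\omega}|$ on $(p,q)$-families is Diophantinely small by Lemma \ref{lem:U0}; the condition $q\psi(q)\le N^{-7}72^{-N}\lambda^{-2(1+\epsilon)}R^{-2}$ is calibrated so that this error, after the Gr\"onwall amplification, still fits below $\lambda^{-(1+\epsilon)}$. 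No small-divisor issues intervene here, since only finitely many monomials were normalized in Section \ref{sec:bnf}.
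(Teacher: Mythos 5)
Your proposal is correct and follows essentially the same route as the paper: a bootstrap/Gr\"onwall argument on $\|r-r^\lambda\|_{\ell^1}$, with the quasi‑resonant error controlled by linearizing the phases $|e^{\mathrm i\Omega_\omega t}-1|\le|\Omega_\omega|t$ via Lemma \ref{lem:U0}, the remainder controlled by \eqref{bound:GammaId2}, and the Gr\"onwall amplification $e^{CN^4 4^N}$ absorbed thanks to $\lambda\ge\exp(5^N)$. The only difference is cosmetic: the paper decomposes the error into $Z_0,\dots,Z_4$ (splitting the $\mathcal N$-difference into its linearization plus Taylor remainder, and the $\mathcal Q$-terms into their values at $r^\lambda$ plus increments), whereas you lump these, which yields equivalent bounds.
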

\begin{proof}
Note that \eqref{cond:final} implies \eqref{assumption}. Therefore, we can apply Lemma \ref{lem:L1}  and consider the estimates \eqref{bound:GammaId2}

We define $\xi(t):=r(t)-r^{\lambda}(t)$. The equation for $\xi$ can be written as $\dot{\xi}=Z_0+Z_1+Z_2+Z_3+Z_4$
where
\begin{align*}
&Z_0:=X_{\mathcal{R}'}(r^{\lambda}+\xi),\\
&Z_1:=D X_{\mathcal{N}}(r^{\lambda})[\xi],\\
&Z_2:=X_{\mathcal{N}}(r^{\lambda}+\xi)-X_{\mathcal{N}}(r^{\lambda})-D 
X_{\mathcal{N}}(r^{\lambda})[\xi],\\
&Z_3:=X_{\mathcal{Q}^{(4, 0)}}(r^{\lambda})+X_{\mathcal{Q}^{(4, \geq 2)}}(r^{\lambda}),\\
&Z_4:=X_{\mathcal{Q}^{(4, 0)}}(r^{\lambda}+\xi)-X_{\mathcal{Q}^{(4, 0)}}(r^{\lambda})+X_{\mathcal{Q}^{(4, \geq 2)}}(r^{\lambda}+\xi)-X_{\mathcal{Q}^{(4, \geq 2)}}(r^{\lambda}).
\end{align*}
Applying the $\ell^1$ norm to this equation we obtain
\[
\frac{d}{d t} \| \xi \|_{\ell^1}\le \|Z_0\|_{\ell^1}+\|Z_1\|_{\ell^1}+\|Z_2\|_{\ell^1}+\|Z_3\|_{\ell^1}+\|Z_4\|_{\ell^1}.
\]
We assume temporarily a \emph{bootstrap assumption}. We call $T^*$ to the 
supremum of the $T'>0$ such that\footnote{If this condition is true for all 
$T'>0$, one can just take $T_*=+\infty$.}
\[
\| \xi(t)\|_{\ell^1}\le 2   \lambda^{-(1+\epsilon)} \qquad \forall t\in [0, 
T'].
\]
Note that \eqref{assump:initial} implies $T_*>0$. A posteriori we will prove 
that  $T_*>T$, where $T$ is the time introduced in \eqref{def:timeT} and therefore  we 
can drop the bootstrap assumption.

 First we need a priori estimates on the terms $Z_i$ defined above.  We shall 
use repeatedly Lemma \ref{lem:young} and the bootstrap assumption without 
mentioning them. We remark that the change to rotating coordinates 
\eqref{def:rotcoord} does not affect the bounds.

\medskip

\noindent\textbf{Bound for $Z_0$.} By \eqref{bound:base} and the bootstrap assumption
\[
\left\|r^\lambda+\xi\right\|_{\ell^1}\lesssim |\Lambda| 
\lambda^{-1}+\lambda^{-(1+\epsilon)}\lesssim N 2^N \lambda^{-1}.
\]
Hence by \eqref{bound:GammaId2}
\[
\| Z_0 \|_{\ell^1}\lesssim  q^{-2} N^5 2^{5N} 
\lambda^{-5}.
\]
\noindent\textbf{Bound for $Z_1$.} By \eqref{bound:base} and considering that $\mathcal{N}$ is a homogenous Hamiltonian of degree $4$ we have 
\[
\| Z_1 \|_{\ell^1}\lesssim |\Lambda|^2 \lambda^{-2} \| \xi \|_{\ell^1}\lesssim 
N^2\, 4^{N}\,\lambda^{-2} \| \xi \|_{\ell^1}.
\]

\noindent\textbf{Bound for $Z_2$.} By 
\eqref{bound:base}, we have
\[
\| Z_2 \|_{\ell^1}\lesssim 
\lambda^{-1}|\Lambda| \| \xi\|_{\ell^1}^2\lesssim 
\lambda^{-2-\eps}\,|\Lambda|\, \| 
\xi\|_{\ell^1}\lesssim N 2\,^{N}\, \lambda^{-2-\eps}\, \| \xi \|_{\ell^1}.
\]
\noindent\textbf{Bound for $Z_3$.} By Remark \ref{rem}, $X_{\mathcal{Q}^{(4, 
\geq 2)}}(r^{\lambda})=0$ and so $Z_3=X_{\mathcal{Q}^{(4, 0)}}(r^{\lambda})$.
{Using that $\mathcal{Q}^{(4, 0)}$ is a homogenous Hamiltonian of degree $4$, we can reason as in Lemma \ref{lem:young} and bound $Z_3$, up to constants, by the product of $\| r^{\lambda} \|_{\ell^1}^3$ and the maximum of the coefficients of the Hamiltonian $\mathcal{Q}^{(4, 0)}$, that is
\[
\sup_{\substack{n_1, n_2, n_3, n_4\in \Lambda,\\
n_1-n_2+n_3-n_4=0,\\
\Omega_{p/q}(n_1, n_2, n_3, n_4)=0}} |\exp(\mathrm{i} \Omega_{\omega}(n_1, n_2, n_3, n_4) t)-1|\le  \sup_{\substack{n_1, n_2, n_3, n_4\in \Lambda,\\
n_1-n_2+n_3-n_4=0,\\
\Omega_{p/q}(n_1, n_2, n_3, n_4)=0}} | \Omega_{\omega}(n_1, n_2, n_3, n_4) |\,|t|.
\] 
Note that the supremum is taken over the $(p, q)$-families of $\Lambda$.
}
 Then, by Lemma \ref{lem:U0} and \eqref{bound:base}, we have
\[
\| Z_3(t) \|_{\ell^1}\lesssim \mathcal{U}_0 |\Lambda|^3 \lambda^{-3}\,t\lesssim 
 3^{2N} R^2 q\psi(q) |\Lambda|^3 \lambda^{-3}\,t
 \lesssim  
 N^3 72^N\,q\,\psi(q)\, \lambda^{-3}\,R^2\,t.
\]

\noindent\textbf{Bound for $Z_4$.} By \eqref{bound:base} we have
\[
\| Z_4 \|_{\ell^1}\lesssim  |\Lambda|^2 \lambda^{-2} \| \xi \|_{\ell^1}\lesssim 
N^2 4^{N} \lambda^{-2} \| \xi \|_{\ell^1},
\]
where we have used that $|e^{\mathrm{i}\alpha}-1|\le 2$ for all $\alpha\in \R$.

\medskip

By collecting the previous estimates we have
\[
\frac{d}{dt} \| {\xi}(t) \|_{\ell^1}\lesssim N^2 4^{N} \lambda^{-2} \| \xi(t) 
\|_{\ell^1} + N^3 72^{N}\,q\,\psi(q)\, \lambda^{-3}\,R^2\,t+q^{-2} N^5 
2^{5N} \lambda^{-5}.
\]
Then, by Gronwall Lemma and condition  
\eqref{cond:final},
\begin{align*}
\| \xi(t) \|_{\ell^1}
&\le C \left(\lambda^{-(1+2\epsilon)}+ \, N^3 72^{N}\,q\,\psi(q)\, 
\lambda^{-3}\,R^2\,t^2+q^{-2} N^5 2^{5N} \lambda^{-5}\, t   \right)\,\exp 
\left(C\, N^2\,4^{N}\,\lambda^{-2} t \right)
\end{align*}
for some universal constant $C>0$.
 Then, for $t\in [0, T]$, where $T$ is defined in \eqref{def:timeT}, one has  
\begin{align*}
\| \xi(t) \|_{\ell^1}
&\le C \left(\lambda^{-(1+2\epsilon)}+ \, N^7 72^{N}\,q\,\psi(q)\, 
\lambda\,R^2+q^{-2} N^7 2^{5N} \lambda^{-3}\right)\,\exp 
\left(C\, N^4\,4^{N}\right).
\end{align*}
 Taking $N\gg 1$ and assuming \eqref{cond:final} we have that
 \[
\| \xi( t) \|_{\ell^1}\le  \lambda^{-(1+\epsilon)} \qquad \forall t\in [0, T].
\]
Therefore $T_*>T$ and  we can drop the bootstrap assumption. This concludes the proof.
\end{proof}

\section{Conclusion of the proofs of Theorems \ref{thm:nosmall} and \ref{thm:small}}\label{sec:conclusion}

We first construct solutions that undergo growth of Sobolev norms. Then we complete the proof of Theorem \ref{thm:small} by assuming that the initial condition has small Sobolev norm.
We prove these theorems under the assumption $s>1$. At the end, we briefly mention how it can be easily adapted for $s\in (0,1)$.

\begin{lemma}\label{lem:ratio}
Fix $s>1$. Then, for $N\gg 1$ and assuming \eqref{cond:final} the following holds.
There exists a solution $\rho(t)$ of \eqref{eq:simpleNLS} such that
\[
\frac{\| \rho(T) \|_s^2}{\| \rho(0) \|_s^2}\gtrsim 2^{(s-1) (N-6)},
\]
where $T$ is the time introduced in \eqref{def:timeT}.
\end{lemma}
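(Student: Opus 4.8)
The plan is to produce the desired solution $\rho(t)$ of \eqref{eq:simpleNLS} by pulling back through the Birkhoff map $\Gamma$ a solution of \eqref{vecfrot} that shadows the scaled toy-model orbit $r^\lambda(t)$, and then to compare Sobolev norms at times $0$ and $T$ using the structure of $\Lambda$. First I would fix $N\gg 1$ and choose $\lambda,q$ so that \eqref{cond:final} holds; set the initial datum in rotating coordinates to be $r(0)=r^\lambda(0)$, which trivially satisfies \eqref{assump:initial}. By Proposition \ref{prop:approxarg}, the corresponding solution $r(t)$ of \eqref{vecfrot} satisfies $\sup_{t\in[0,T]}\|r(t)-r^\lambda(t)\|_{\ell^1}\le \lambda^{-(1+\epsilon)}$. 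Undoing the rotating coordinates \eqref{def:rotcoord} (which preserves $\ell^1$ and, mode by mode, preserves $|a_n|$) and applying $\Gamma$ gives a solution $\rho(t)=\Gamma(\beta(t))$ of \eqref{eq:simpleNLS}, with $\|\rho(t)-\beta(t)\|_{\ell^1}\lesssim q^{-2}\eta^3$ by \eqref{bound:GammaId2}, and $\beta(t)$ within $\lambda^{-(1+\epsilon)}$ of (the de-rotated) $r^\lambda(t)$ in $\ell^1$.

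The next step is to extract the Sobolev norm behaviour. At $t=0$, the toy-model orbit from Theorem \ref{thm:orbit} (rescaled via \eqref{lambda}) is concentrated on generation $\Lambda_3$: $|b_3^\lambda(0)|>\lambda^{-1}(1-\delta^\sigma)$ and $|b_j^\lambda(0)|<\lambda^{-1}\delta^\sigma$ for $j\ne 3$, and each generation has $2^{N-1}$ modes. Hence $\|r^\lambda(0)\|_s^2 \sim \lambda^{-2}\sum_{n\in\Lambda_3}\langle n\rangle^{2s}$ up to a multiplicative factor close to $1$ (the contributions of other generations are down by $\delta^{2\sigma}$ times the ratios \eqref{bound:gen}, hence negligible for $N$ large since $\delta=e^{-\gamma N}$ and $\gamma\gg1$). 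Similarly $\|r^\lambda(T)\|_s^2 \gtrsim \lambda^{-2}\sum_{n\in\Lambda_{N-1}}\langle n\rangle^{2s}$. Taking the ratio and using the key growth estimate from Theorem \ref{thm:gen}, namely $\sum_{n\in\Lambda_{N-1}}|n|^{2s}/\sum_{n\in\Lambda_3}|n|^{2s}\gtrsim 2^{(s-1)(N-4)}$, yields $\|r^\lambda(T)\|_s^2/\|r^\lambda(0)\|_s^2\gtrsim 2^{(s-1)(N-4)}$.

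Finally I would transfer this ratio from $r^\lambda$ to $\rho$. Since $\Gamma$ and the de-rotation move $\beta$ away from $r^\lambda$ by only $O(q^{-2}\eta^3+\lambda^{-(1+\epsilon)})$ in $\ell^1$, and $\ell^1\hookrightarrow H^s$ only crudely, I must be careful: the perturbation can still contribute to $\|\rho\|_s$ because high modes are weighted by $\langle n\rangle^{2s}$. The clean way is to note that the perturbation is supported (up to the $O(q^{-2}\eta^5)$ tail of $\mathcal R$, which we ignore or absorb) on frequencies of size comparable to those in $\Lambda$, namely $|n|\lesssim q\,3^N R$ by \eqref{def:R}, so $\|\rho(t)-\beta(t)\|_s^2 \lesssim (q\,3^N R)^{2s}\|\rho(t)-\beta(t)\|_{\ell^1}^2$, and similarly for the $\ell^1$-gap between $\beta$ and $r^\lambda$. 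Comparing this with $\|r^\lambda(T)\|_s^2\gtrsim \lambda^{-2}(qR)^{2s}\cdot(\text{stuff})$ and using $\lambda\ge\exp(5^N)$ together with the second inequality in \eqref{cond:final} shows the error terms are lower order, so $\|\rho(0)\|_s^2\sim\|r^\lambda(0)\|_s^2$ and $\|\rho(T)\|_s^2\gtrsim\|r^\lambda(T)\|_s^2$, whence $\|\rho(T)\|_s^2/\|\rho(0)\|_s^2\gtrsim 2^{(s-1)(N-4)}\gtrsim 2^{(s-1)(N-6)}$.

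The main obstacle I anticipate is precisely this last transfer: controlling the Sobolev norm of the $\ell^1$-small error, which requires knowing that the error remains essentially Fourier-supported near $\Lambda$ (the modes actually excited by $X_{\mathcal N},X_{\mathcal Q},X_{\mathcal R}$ starting from data in $\mathcal V_\Lambda$), and then checking the quantitative balance between $\lambda$, $q$, $R$, and $3^N$ dictated by \eqref{cond:final} so that the error is genuinely of lower order than the main term. The $s<1$ case is symmetric: one reverses the roles, running the toy-model orbit backward so that the norm is large at $t=0$ on $\Lambda_{N-1}$-type concentration and small at $t=T$, or equivalently noting that $2^{(s-1)(N-6)}<1$ there so the statement is about a ratio that the same orbit realizes after relabelling.
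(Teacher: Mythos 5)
Your overall scheme is the same as the paper's: set up a solution whose normalized, rotating-frame coordinate $r(t)$ shadows $r^\lambda(t)$ via Proposition \ref{prop:approxarg}, then compare $H^s$ norms at times $0$ and $T$ using the concentration of the toy-model orbit on $\Lambda_3$ at $t=0$ and on the last excited generation at $t=T$. (One small difference: the paper sets $\rho(0)=r^\lambda(0)$, so that $\|\rho(0)\|_s=\|r^\lambda(0)\|_s$ exactly, and shows $\|\Gamma^{-1}(\rho(0))-r^\lambda(0)\|_{\ell^1}$ is small enough to trigger \eqref{assump:initial}; you set $r(0)=r^\lambda(0)$ instead, which forces you to also estimate $\|\Gamma(r^\lambda(0))\|_s$.)

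The genuine gap is exactly the step you flag as the ``main obstacle'' and then try to push through. You need a lower bound on $\|\rho(T)\|_s$, and you propose to get it by writing $\rho(T)=r^\lambda(T)+(\text{error})$ and bounding $\|\text{error}\|_s\lesssim (q\,3^N R)^s\,\|\text{error}\|_{\ell^1}$, i.e.\ pretending the error stays Fourier-supported on modes of size $\lesssim q\,3^N R$. That premise is false for the dominant piece of the error. The part coming from $\Gamma-\mathrm{Id}$ is indeed supported on a fixed finite set (since $F$ involves only the finite set $\mathcal A(1)$), but the piece $\xi(T)=r(T)-r^\lambda(T)$ is not: the vector field \eqref{vecfrot} contains $X_{\mathcal H^{(4,\geq2)}}$, $X_{\mathcal Q^{(4,\geq2)}}$ and $X_{\mathcal R'}$, all of which couple to arbitrarily high modes, so $\xi(t)$ has no a priori frequency localization. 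Its $\ell^1$ smallness ($\lesssim\lambda^{-(1+\epsilon)}$) gives no control on $\|\xi(T)\|_s$ (indeed $\ell^1$ is a strictly weaker topology than $H^s$, as the paper itself stresses in the Heuristics section), so the inequality $\|\rho(T)\|_s\gtrsim\|r^\lambda(T)\|_s$ does not follow by this route.

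The paper circumvents this entirely by never estimating $\|\text{error}\|_s$. It bounds $\|\rho(T)\|_s^2$ from below by restricting the sum to the single generation $\Lambda_{N-2}$,
\[
\|\rho(T)\|_s^2 \ \geq\ \sum_{n\in\Lambda_{N-2}}|n|^{2s}|\rho_n(T)|^2 \ \geq\ S_{N-2}\cdot\inf_{n\in\Lambda_{N-2}}|\rho_n(T)|^2,
\]
and then controls each $|\rho_n(T)|$ from below, mode by mode, using only the elementary inequality $|x_n|\leq\|x\|_{\ell^1}$:
\[
|\rho_n(T)|\ \geq\ |r^\lambda_n(T)|-\|r(T)-r^\lambda(T)\|_{\ell^1}-\|\Gamma(\cdot)-\mathrm{Id}\|_{\ell^1}\ \geq\ \tfrac{3}{4}\lambda^{-1}-\lambda^{-(1+\epsilon)}-C\lambda^{-3}\ \geq\ \tfrac{1}{2}\lambda^{-1}.
\]
This only requires $\ell^1$-closeness, never an $H^s$ bound on the error, and is the missing idea you should substitute for your proposed transfer step. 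With that change the rest of your argument (the $\|\rho(0)\|_s^2\sim\lambda^{-2}S_3$ estimate via \eqref{bound:gen} and $\delta=e^{-\gamma N}$ with $\gamma$ large, and the ratio $S_{N-2}/S_3$ from Theorem \ref{thm:gen}) goes through as in the paper.
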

\begin{proof}
We consider the solution $\rho(t)$ of \eqref{eq:simpleNLS} such that $\rho(0)=r^{\lambda}(0)$, where $r^{\lambda}(t)$ has been defined in \eqref{def:rlambda}. We define
\[
S_i:=\sum_{n\in \Lambda_i} |n|^{2 s}, \qquad i=1, \dots, N.
\]
We start by giving a lower bound on the norm at time $T$ in terms of $S_{N-2}$
\[
\| \rho(T)\|_{s}^2\geq \sum_{n\in\Lambda_{N-2}} |n|^{2s} |\rho_n(T)|^2\geq S_{N-2}\,\inf_{n\in\Lambda_{N-2}} |\rho_n(T)|^2.
\]
Now we obtain a lower bound for $|\rho_n(T)|$ with $n\in \Lambda_{N-2}$. Using the change of variables $\Gamma$ obtained in Proposition \ref{prop:wbnf} and the rotating coordinates \eqref{def:rotcoord}, we can write the solution $\rho(t)$ as
\[
\rho(t)=\Gamma\left(\{ r_n(t)\,e^{\mathrm{i} \lambda(n) t} \} \right),
\]
where $r(t)$ is solution of the system \eqref{vecfrot} (the normalized equation in rotating coordinates). Now we show that $r(t)$ fits into the assumption of Proposition \ref{prop:approxarg} and so it is well approximated by the trajectory $r^{\lambda}$ given in \eqref{def:rlambda}.
To this end, we need to check condition \eqref{assump:initial}.
 Since $\rho(0)=r^{\lambda}(0)$ we have
\begin{equation}\label{bound:diff}
\| r(0)-r^{\lambda}(0) \|_{\ell^1}=\| r(0)-\rho(0) \|_{\ell^1}=\| r(0)-\Gamma(r(0)) \|_{\ell^1}. 
\end{equation}
Again by using Proposition \ref{prop:wbnf}, the estimate \eqref{bound:base}, the fact that $\rho(0)=r^{\lambda}(0)$ and \eqref{cond:final} we have
\begin{equation*}
\| r(0) \|_{\ell^1}=\|  \Gamma^{-1} (\rho(0))\|_{\ell^1}\lesssim \lambda^{-1} N\,2^N.
\end{equation*}
Then, using again \eqref{cond:final} and recalling also \eqref{bound:diff}, 
\[
\| r(0)-r^{\lambda}(0) \|_{\ell^1}\lesssim q^{-2} N^3 8^{N-1} \lambda^{-3}\ll \lambda^{-2}.
\]
Hence, $r(t)$ satisfies the assumption \eqref{assump:initial} and we can apply Proposition \ref{prop:approxarg} to estimate $\rho_n(T)$ for $n\in \Lambda_{N-2}$, where $T$ is in \eqref{def:timeT}. 
We split it as
\begin{align*}
|\rho_n(T)|&\geq |r_n(T)|-\left|  \Gamma_n(\{ r_n(T)\,e^{\mathrm{i} \lambda(n) T} \})(T)- r_n(T)\,e^{\mathrm{i} \lambda(n) T}\right|\\
&\geq |r^{\lambda}_n(T)|-|r_n(T)-r^{\lambda}(T)|-\left|  \Gamma_n(\{ r_n(T)\,e^{\mathrm{i} \lambda(n) T} \})(T)- r_n(T)\,e^{\mathrm{i} \lambda(n) T}\right|.
\end{align*}
Using that $T=\lambda^2 T_0$, the definitions in \eqref{lambda} and \eqref{def:rlambda} and  Theorem \ref{thm:orbit} we have
\[
|r^{\lambda}_n(T)|=\lambda^{-1} |b_{N-2}(T_0)|\geq \frac{3}{4} \lambda^{-1}.
\]
Now we give upper bounds for the last two terms. By the bound \eqref{bound:close} of Proposition \ref{prop:approxarg} we have that  $n\in \Lambda_{N-2}$ (taking $N$ large enough),
\[
|r_n(T)-r_n^{\lambda}(T)|\le \sum_{n\in \Z^2} |r_n(T)-r_n^{\lambda}(T)|\le \lambda^{-(1+\epsilon)}.
\]
We observe that this implies that
\[
|r_n(T)|\le 2 \lambda^{-1}.
\]
By estimate \eqref{bound:GammaId2} in Proposition \ref{prop:wbnf} we have
\begin{align*}
&\left|  \Gamma_n(\{ r_n(T)\,e^{\mathrm{i} \lambda(n) T} \})(T)- r_n(T)\,e^{\mathrm{i} \lambda(n) T}\right|\\
&\le \left\|  \Gamma_n(\{ r_n(T)\,e^{\mathrm{i} \lambda(n) T} \})(T)- r_n(T)\,e^{\mathrm{i} \lambda(n) T}\right\|_{\ell^1}\lesssim   q^{-2} \lambda^{-3}\lesssim \lambda^{-3}.
\end{align*}
In conclusion, using also \eqref{cond:final} and taking $N$ large enough,
we have
\[
|\rho_n(T)|\geq \frac{3}{4}\lambda^{-1}-\lambda^{-(1+\epsilon)}-C \lambda^{-3}\geq \frac{\lambda^{-1}}{2},
\]
where $C>0$ is a universal constant. This implies
\begin{equation}\label{bound:finalnorm}
\| \rho(T)\|_s^2\geq \frac{\lambda^{-2}}{4} S_{N-2}.
\end{equation}
Now we claim the following
\begin{equation}\label{bound:initialnorm}
\| \rho(0)\|_s^2\sim \lambda^{-2} S_3.
\end{equation}
Since $\rho(0)=r^{\lambda}(0)$ is supported on $\Lambda$, the initial Sobolev norm is given by
\[
\| \rho(0)\|_s^2=\sum_{n\in\Lambda} |n|^{2s} |r^{\lambda}_n(0)|^2.
\]
Then recalling also the definition of $r^{\lambda}$ in \eqref{def:rlambda} and using Theorem \ref{thm:orbit} we have
\[
\sum_{n\in\Lambda} |n|^{2s} |r^{\lambda}_n(0)|^2\le \lambda^{-2} S_3+\lambda^{-2} \delta^{2\sigma} \sum_{i\neq 3} S_i\le \lambda^{-2} S_3 \left( 1+\delta^{2\sigma} \sum_{i\neq 3} \frac{S_i}{S_3}\right).
\]
By \eqref{bound:gen}, we have that
\[
\delta^{2\sigma} \sum_{i\neq 3} \frac{S_i}{S_3}\le \delta^{2\sigma} C\,(N-1)\,e^{s N}
\]
for some universal constant $C>0$. Since $\delta=e^{-\gamma N}$ with a $\gamma$ large to be fixed (see Theorem \ref{thm:orbit}), we can take $\gamma$ such that $2\gamma\sigma>s$. Then
\[
\lambda^{-2} S_3 \left( 1+\delta^{2\sigma} \sum_{i\neq 3} \frac{S_i}{S_3}\right)\sim \lambda^{-2} S_3.
\]
On the other hand
\[
\| \rho(0)\|_s^2=\sum_{n\in\Lambda} |n|^{2s} |r^{\lambda}_n(0)|^2\geq \sum_{n\in\Lambda_3} |n|^{2s} |r^{\lambda}_n(0)|^2\geq \lambda^{-2} S_3 (1-\delta^{\sigma})^2\geq \tilde{C} \lambda^{-2} S_3
\]
for some universal constant $\tilde{C}>0$.
This concludes the proof of the claim.
By Theorem \ref{thm:gen}, \eqref{bound:initialnorm} and \eqref{bound:finalnorm} we have
\[
\frac{\| \rho(T) \|_s^2}{\| \rho(0) \|_s^2}\gtrsim \frac{S_{N-2}}{S_{3}}\gtrsim 2^{(s-1) (N-6)}.
\]
\end{proof}

\paragraph{Conclusion of the proof of Theorem \ref{thm:nosmall}}
Fix $s>1$ and $\mathcal{C}\gg 1$ .
We consider $N$ such that
\[
2^{(s-1) (N-6)}\geq {\mathcal{C}^2} \qquad \Rightarrow \qquad N\sim \frac{\log(\mathcal{C})}{s-1}.
\]
Moreover, taking for instance
\[
\lambda= \exp(5^N),
\]
 and the smaller $q$ such that
\[
q\,\psi(q)\le N^{-7} 72^{-N} \lambda^{-2(1+\epsilon)} R^{-2}
\]
 the conditions \eqref{cond:final} hold. Then the growth is given by Lemma \ref{lem:ratio} and
the estimate on the diffusion time \eqref{time1} comes from \eqref{bound:timeT0}, \eqref{def:timeT}.

\paragraph{Conclusion of the proof of Theorem \ref{thm:small}}
Fix $s>1$, $\tau>2s$, $\mathcal{C}\gg1$ and $\mu\ll 1$ and  recall that, for Theorem \ref{thm:small}, we consider $\omega$'s which are $\psi$--approximable with 
\begin{equation}\label{psitau}
\psi(q)=\frac{\tc}{q^{1+\tau}}, \qquad \tc\geq 1
\end{equation}
and satisfy
\[
 \left|\omega-\frac{p}{q}\right|\geq \frac{1}{q^{1+ \log q}}
\]
for $q$ large enough and any $p\in\mathbb{N}$. To control the initial Sobolev norm it will be crucial to control the size of the convergents of $\omega$. Given a convergent sequence $(p_n, q_n)\in \Z\times \N$ we have
\[
\frac{1}{q_n (q_{n+1}+q_n)}\le  \left|\omega-\frac{p_n}{q_n}\right|\le \frac{1}{q_n q_{n+1}}.
\]
We deduce the following lower and upper bounds for large enough denominators of the convergents $(p_n, q_n)$
\begin{equation}\label{bound:qn+1}
q_n^{1+\tau}-q_n\le q_{n+1}\le q_n^{\log q_n}.
\end{equation}
Since $\lim_{n\to +\infty} q_n= +\infty$ then, for some fixed $n>0$, there exists $\mathtt{a}, \tilde{\kappa}, \kappa\gg 1$ such that
\[
q_n\in \left[\exp(\tilde{\kappa}\mathtt{a}^N), \exp(\kappa \mathtt{a}^N)\right].
\]
By \eqref{bound:qn+1} we have that for all $m>0$
\begin{equation}\label{bound:allq2}
q_{n+m}\in \left[\exp(\tilde{\kappa}\mathtt{a}^N), \exp( \kappa^m \mathtt{a}^{mN})\right].
\end{equation}
We shall use this fact later, because a control on the size of the successive convergents gives a control on the upper bound of the instability time.

 We consider $N$ such that
\[
2^{(s-1) (N-6)}\geq \frac{\mathcal{C}^2}{\mu^2} \qquad \Rightarrow \qquad N\sim \frac{\log(\mathcal{C}/\mu)}{s-1}.
\]
Lemma \ref{lem:ratio} gives the growth of Sobolev norms. It only remains to  impose that the Sobolev norm of the initial condition is  of order $\mu$. 
{ By \eqref{bound:initialnorm} we need to ask that
\begin{equation}\label{cond:roma}
\| \rho(0)\|_s^2\sim \lambda^{-2} S_3\sim \mu^{2}.
\end{equation}
}
By \eqref{def:R} we have that
\[
C^{-2s}  2^{N-1}\,\,q^{2 s}\,R^{2s}\le S_3\le C^{2s}  2^{N-1}\,3^{2Ns}\,q^{2 s}\,R^{2s}.
\]
Thus we can take $\lambda^2\sim \mu^{-2} S_3$, which satisfies
\begin{equation}\label{lowuplambda}
C^{-s} 2^{N/2} q^s  R^s {\mu^{-1}}\lesssim \lambda\lesssim C^s 2^{N/2} 3^{Ns} q^s R^s \,\mu^{-1}.
\end{equation}
We note that we can enlarge the above interval by choosing $N$ much larger (that can be done, for instance, enlarging $\mathcal{C}$ since $N\sim \log(\mathcal{C}/\mu)$).
Recalling that $R\geq \exp(\alpha^N)$ with $\alpha\gg 1$ (see for instance Theorem \ref{thm:gen}) it is easy to see that the chosen $\lambda$ satisfies the first inequality in \eqref{cond:final} taking $N$ large enough. By the choice of $\psi$ in \eqref{psitau}, the second inequality in \eqref{cond:final} is
\[
\lambda \le R^{-\tfrac{1}{1+\epsilon}} q^{\tfrac{\tau}{2(1+\epsilon)}} (72^{-N} N^{-7})^{\tfrac{1}{2(1+\epsilon)}} .
\]
We remark that $\epsilon$ given in Proposition \ref{prop:approxarg} can be considered arbitrarily small.
The above inequality is compatible with \eqref{lowuplambda} if
\begin{equation}\label{cond:compat}
 2^{N/2} q^s  R^s {\mu^{-1}} \lesssim R^{-\tfrac{1}{1+\epsilon}} q^{\tfrac{\tau}{2(1+\epsilon)}} (72^{-N} N^{-7})^{\tfrac{1}{2(1+\epsilon)}}.
\end{equation}
Let us call 
\[
\nu:=\frac{\tau}{2(1+\epsilon)}-s.
\]
Since $\tau>2s$ and $\epsilon>0$ is arbitrarily small we have that $\nu>0$. Therefore if
\begin{equation}\label{explosion}
 \tilde{\kappa}=\frac{10 s(1+\tilde{\eta})}{\tau-2 s}
\end{equation}
where $\tilde{\eta}$ is the constant introduced in Theorem \ref{thm:gen}.

We have that, for all $q\geq \exp(\tilde{\kappa} \alpha^N)$ and $N$ large enough,
\begin{equation}\label{lowbound:q}
q^{\nu}\gtrsim 2^{N/2}   R^{s+\tfrac{1}{1+\epsilon}}\,(72^N N^7)^{\tfrac{1}{2(1+\epsilon)}} {\mu^{-1}},
\end{equation}
which is the compatibility condition \eqref{cond:compat}. By the discussion above (see \eqref{bound:allq2}) there exist $\kappa\gg \tilde{\kappa}$ and $\sigma\gg 1$ such that the interval $[\exp(\tilde{\kappa} \alpha^N), \exp({\kappa} \alpha^{\sigma N})]$ contains a convergent $q\in\mathbb{N}$ that satisfies \eqref{lowbound:q}.

Then, one can chose   $\lambda$ in the interval \eqref{lowuplambda} such that the initial Sobolev norm satisfies \eqref{cond:roma}. That is, 
\[
\tilde{C}^{-1}\mu \le \| \rho(0) \|_s\le \tilde{C} \mu
\]
for some $\tilde{C}>0$ independent of $\mu$. Moreover,
the chosen $\lambda$ and $q$ also satisfy \eqref{cond:final} and \eqref{assumption} respectively. Therefore, we can apply Proposition \ref{prop:approxarg}.

The upper bound \eqref{time2} on the time $T$ in \eqref{def:timeT} is obtained by using the bounds \eqref{lowuplambda}, the fact that $q\le \exp(\kappa\alpha^{\sigma N})$ and the upper bound of $R$ in Theorem \ref{thm:gen}. We remark that by \eqref{explosion} if $\tau\to 2s^+$ the constant $\tilde{\kappa}$, and so $\kappa$, tends to $+\infty$. This means that we lose completely the control on the time $T$ when $\tau$ approaches the lower bound $2s$.

\paragraph{Case $s\in (0, 1)$.} The proofs of Theorems \ref{thm:nosmall} and \ref{thm:small} with $s\in (0, 1)$ follow the same lines as the ones for $s>1$. For full details we refer to \cite{GuardiaHHMP19}. The main difference is that we need to consider initial data supported on ${S}_{N-2}$. The energy moves from $S_{N-2}$ to $S_3$, hence we get
\[
\frac{\| \rho(T) \|_s^2}{\| \rho(0) \|_s^2}\gtrsim \frac{S_{3}}{S_{N-2}}\gtrsim 2^{(1-s) (N-6)}.
\]

\bibliographystyle{plain}
\bibliography{biblio}

\end{document}